\documentclass[11pt, letterpaper]{article}

\usepackage[T1]{fontenc}
\usepackage[utf8]{inputenc}
\usepackage{microtype}

\usepackage{amsmath, mathtools}
\usepackage{amsthm}
\usepackage{newtxtext}
\usepackage{newtxmath}
\usepackage{thmtools}

\usepackage[margin=1in]{geometry}
\usepackage{setspace}       
\usepackage{fancyhdr}       
\usepackage[labelfont={bf,sf}, textfont={small}, labelsep=period]{caption} 
\usepackage{booktabs}       
\usepackage{xcolor}         

\definecolor{AcademicBlue}{RGB}{0, 50, 150}

\usepackage{amsmath, mathtools} 
\usepackage{amsthm}
\usepackage{thmtools}       

\declaretheoremstyle[
    headfont=\bfseries\sffamily\color{AcademicBlue!80!black}, 
    bodyfont=\itshape,
    spaceabove=1em,
    spacebelow=1em,
]{coloredthm}

\declaretheorem[style=coloredthm, name=Theorem]{theorem}
\declaretheorem[style=coloredthm, name=Proposition]{proposition}
\declaretheorem[style=coloredthm, name=Lemma]{lemma}

\declaretheoremstyle[
    headfont=\bfseries\sffamily,
    bodyfont=\normalfont,
    spaceabove=1em,
    spacebelow=1em,
]{definitionstyle}

\declaretheorem[style=definitionstyle, name=Definition]{definition}

\declaretheorem[style=definitionstyle, name=Example]{example}

\usepackage[linesnumbered,ruled,vlined]{algorithm2e} 
\SetKwInput{KwInput}{Input}
\SetKwInput{KwOutput}{Output}

\usepackage{natbib}
\usepackage{graphicx}
\usepackage{subcaption}
\usepackage{float}
\usepackage{enumitem}       
\setlist{nosep}             

\usepackage{authblk}        

\usepackage[colorlinks=true, linkcolor=AcademicBlue, citecolor=AcademicBlue, urlcolor=AcademicBlue, breaklinks=true]{hyperref}

\usepackage[nameinlink, capitalize]{cleveref} 

\title{\textbf{\Large Fenchel-Young Estimators of Perturbed Utility Models}\\
\vspace{0.5em}\large } 

\author[a]{Xi Lin}
\author[a,b,*]{Yafeng Yin}
\author[a]{Tianming Liu}

\affil[a]{Department of Civil and Environmental Engineering, University of Michigan, Ann Arbor, MI 48109, USA}
\affil[b]{Department of Industrial and Operations Engineering, University of Michigan, Ann Arbor, MI 48109, USA} 

\affil[*]{Corresponding author. \protect\\ \textit{Email addresses:} \textrm{xilina, yafeng, tianmliu@umich.edu }.}

\date{}

\begin{document}

\maketitle

\begin{abstract}
\noindent 
The Perturbed Utility Model (PUM) framework provides a generalization of discrete choice analysis, unifying models like Multinomial Logit (MNL) and Sparsemax through convex optimization. However, standard Maximum Likelihood Estimation (MLE) encounters theoretical and computational limitations when applied to this broader class, particularly regarding non-convexity and instability in sparse regimes. To address these issues, this paper introduces a unified estimation framework for PUMs based on the Fenchel-Young loss. By leveraging the intrinsic convex conjugate structure of the choice probabilities, we demonstrate that the Fenchel-Young estimator guarantees global convexity, providing a stable alternative to MLE that accommodates both dense and sparse choice kernels. Furthermore, we establish the framework's asymptotic consistency and normality under standard regularity conditions.

Leveraging the tractability of the Fenchel-Young estimator, we further develop a Parametric Basis Estimation (PBE) procedure that estimate utility parameters jointly with a tree-structured perturbation function within a pre-specified basis family. PBE employs a bi-level optimization architecture that parameterizes the unknown perturbation as a learnable convex combination of basis functions. For any fixed perturbation structure, the inner Fenchel--Young estimation problem is globally convex in the utility parameters, yielding a well-defined solution mapping that can be differentiated under regularity conditions. Empirical validation on the Swissmetro dataset demonstrates that the proposed framework improves predictive performance, as measured by the Brier score and Brier Skill Score, compared to the standard MNL baseline.

\vspace{1em}

\noindent \textbf{Keywords:} Discrete choice models; Perturbed utility models; Fenchel-Young estimation; Consistency
\end{abstract}

\section{Introduction}

Since the foundational contributions of Daniel McFadden established the econometrics of discrete choice \citep{mcfadden1972conditional}, this framework has served as a central paradigm for analyzing decision-making behavior across fields such as transportation \citep{mcfadden1974measurement, ben1985discrete}, marketing \citep{malhotra1984use}, and health economics \citep{clark2014discrete}. Within this paradigm, individuals are typically modeled as selecting the alternative that maximizes latent utility, with unobserved utility components represented by random error terms, giving rise to the random utility model (RUM). The Multinomial Logit (MNL) model frequently serves as the canonical baseline due to its analytical tractability and closed-form probability representation \citep{hausman1984specification}. Under the standard linear-in-parameters specification, estimating MNL parameters via Maximum Likelihood Estimation (MLE; \citealp{wald1949note})  yields a globally convex optimization problem \citep{donoso2010microeconomic}. This convexity facilitates numerical stability and theoretically well-posed inference, helping establish the MNL–MLE framework as a classical approach for estimating preference parameters from observed choice data. 

Despite its widespread use, the classical RUM framework exhibits recognized structural limitations. The Independence of Irrelevant Alternatives (IIA) property imposes restrictive substitution patterns \citep{ray1973independence}, and reliance on specific error distributions can constrain behavioral flexibility. These limitations have motivated extensions, such as Probit \citep{daganzo1977multinomial} and Nested Logit formulations \citep{wen2001generalized}, which relax distributional constraints often at the cost of increased computational complexity. 

Discrete choice theory has recently undergone a conceptual generalization through the Perturbed Utility Model (PUM) framework \citep{mcfadden2012theory, fudenberg2015stochastic}. Rather than deriving choice probabilities strictly from exogenously specified stochastic error distributions, the PUM framework formulates decision-making as utility maximization under a convex perturbation (or regularization) function defined over the probability simplex. The decision-maker selects a probability vector that maximizes expected utility net of a convex penalty, which can capture information-processing costs, bounded rationality, or an intrinsic preference for randomization \citep{fosgerau2020discrete}.

Central to this framework is the duality between the perturbation function and the surplus (expected maximum utility) function. Through Legendre–Fenchel duality \citep{touchette2005legendre, mcfadden2012theory}, the expected maximum perturbed utility defines a convex potential whose gradient yields the choice probabilities. This establishes a systematic correspondence between stochastic choice behavior and convex regularization structures. Classical models can be recovered as special cases; for instance, the MNL model arises when the perturbation corresponds to the negative Shannon entropy \citep{anderson1992}. This convex-analytic perspective provides a unifying language for integrating behavioral realism, information frictions, and optimization geometry within a single architecture, and has proven increasingly valuable in transportation science for facilitating scalable equilibrium analysis \citep{fosgerau2022perturbed, yao2024perturbed}.

Given the structural generality of the PUM framework, a fundamental question concerns statistical inference: how can the parameters of a general PUM be reliably estimated from observed choice data? The most immediate strategy might be to extend the classical MLE paradigm. However, the desirable geometric properties of MLE do not automatically generalize to arbitrary perturbations. While entropic regularization induces log-concavity, alternative perturbation structures may yield non-convex likelihoods. More notably, modern PUM specifications, such as quadratic regularization (e.g., Sparsemax; \citealp{martins2016softmax, gabaix2014sparsity}), can induce sparse choice probabilities, assigning strictly zero probability to low-utility alternatives. In such regimes, the standard MLE objective can become undefined or numerically unstable, as the logarithm of a zero probability diverges. This structural incompatibility suggests that likelihood geometry may not align robustly with the broader convex structure of all PUMs.

Beyond estimating preference parameters under a fixed perturbation, the flexibility of the PUM paradigm raises a more fundamental estimation problem: estimating the systematic utility specification together with the perturbation function within a structured admissible class. In this paper, we formalize this task as the \textit{structural estimation} problem. The objective is to infer a behaviorally consistent perturbation structure that rationalizes observed choice behavior under the maintained PUM representation. This formulation reduces reliance on externally imposed perturbation functions and allows features such as smooth compensatory trade-offs or sparse heuristic-like choice patterns to be disciplined by data. While a few studies have explored related ideas (e.g., \citealp{aboutaleb2020learning, sifringer2020enhancing, liu2023end, yao2024aperturbed}), existing approaches either focus on specific discrete choice models, work with aggregate market-share data rather than individual-level choice data, or rely on complex machine-learning architectures whose theoretical properties may not be transparent. 

In this paper, we develop a unified estimation framework for general PUMs grounded in Fenchel–Young loss functions \citep{blondel2019learning, blondel2020learning}. Exploiting the intrinsic convex conjugacy of the PUM architecture, we align the geometry of the loss function with the convex structure of the surplus function. This alignment guarantees that the Fenchel-Young estimator yields a globally convex optimization problem for general PUMs. Geometrically, optimizing this loss is mathematically equivalent to minimizing the Bregman divergence between the observed discrete choices and the predicted probability mappings. For the Shannon-entropy perturbation, the induced Bregman divergence is the Kullback--Leibler divergence, and the Fenchel--Young loss coincides with the MNL negative log-likelihood up to scale. In this sense, Fenchel-Young estimation recovers classical MLE as a special case. For non-entropic PUMs, Fenchel-Young estimation instead uses the Bregman divergence generated by the relevant perturbation function, avoiding the logarithmic singularity that arises in MLE when sparse choice kernels assign zero probability to an observed alternative.

To operationalize this estimation framework, we develop efficient algorithms specifically tailored to solve the Fenchel-Young objective, ensuring computational tractability even for complex, non-separable perturbation structures. Beyond computational implementation, we solidify the theoretical foundation of this approach by rigorously establishing its asymptotic statistical properties. Under standard regularity conditions, we prove that the Fenchel-Young estimator is asymptotically consistent and normal. This provides statistical guarantees that match those of classical econometric estimators, yet with expanded structural flexibility.

Building upon the stability and global convexity of the Fenchel-Young estimator, we further demonstrate its capacity to support advanced structural learning. Specifically, we introduce Parametric Basis Estimation (PBE) as a methodological framework for the joint recovery of utility parameters and the latent perturbation structure. To capture complex substitution patterns, we base this framework upon tree-structured PUMs, parameterizing the unknown tree-structured perturbation as learnable convex combinations over separate micro-level and macro-level basis dictionaries. PBE is formulated as a bi-level optimization problem where the inner loop relies on the Fenchel–Young loss. The global convexity of this inner objective ensures a stable solution mapping, enabling the derivation of analytical hypergradients via implicit differentiation.

The remainder of this paper is organized as follows. Section \ref{sec:PUM} reviews the theoretical foundations of PUMs, highlighting the convex duality linking regularized utility to choice probabilities. Section \ref{sec:FY} discusses the limitations of MLE in sparse regimes and introduces the globally convex Fenchel-Young estimation framework. Section \ref{sec:FY_solution} presents the algorithms for solving the convex estimation problems. Section \ref{sec:asymptotic} outlines the statistical guarantees, establishing asymptotic consistency and normality of the Fenchel-Young estimators. Section \ref{sec:Joint} presents the PBE framework, detailing the bi-level optimization architecture. Section \ref{sec:Numerical} demonstrates the practical efficacy of the proposed methods through algorithmic tests, synthetic validations, and an empirical application. Finally, Section \ref{sec:concluding} concludes the paper.


\section{Foundations of Perturbed Utility Models} \label{sec:PUM}

\subsection{Basics and Examples}

In this subsection, we provide a characterization of the PUM framework as established by \citet{mcfadden2012theory} and \citet{fudenberg2015stochastic}, which derives choice probabilities through a convex optimization principle. This perspective is particularly advantageous for the upcoming Fenchel-Young estimation as it exposes the geometric structure of the choice probability mapping.

\subsubsection{The Primal Formulation: Utility Maximization with Regularization}

Consider a decision-maker $n$ facing a finite set of mutually exclusive alternatives $\mathcal{C}$, where $K \triangleq |\mathcal{C}|$. Let $\mathbf{V}_n \in \mathbb{R}^{|\mathcal{C}|}$ denote the vector of systematic (deterministic) utilities, where $V_{ni} = \boldsymbol{\beta}^\top \mathbf{x}_{ni}$ is the utility of alternative $i \in \mathcal{C}$, $\mathbf{x}_{ni}$ is a vector of observed attributes, and $\boldsymbol{\beta}$ is the vector of unknown taste parameters to be estimated.

In the PUM framework, the decision-maker is assumed to maximize the expected systematic utility minus a convex penalty function that represents the cost of information processing or the intrinsic preference for randomization. Let $\mathbf{p}_n \in \Delta$ be the choice probability vector, where $\Delta = \{ \mathbf{q} \in \mathbb{R}^{|\mathcal{C}|} \mid \mathbf{q} \ge 0, \sum_{i \in \mathcal{C}} q_i = 1 \}$ is the probability simplex. The choice probability vector $\mathbf{p}_n$ is defined as the unique solution to the following convex optimization problem:
\begin{equation}
    \label{eq:pum_primal}
    \mathbf{p}_n(\mathbf{V}_n) = \arg \max_{\mathbf{q} \in \Delta} \left\{ \sum_{i \in \mathcal{C}} q_i V_{ni} - \Lambda(\mathbf{q}) \right\},
\end{equation}
where $\Lambda: \Delta \to \mathbb{R} \cup \{ \infty \}$ is the \textit{perturbation function}. We assume $\Lambda$ satisfies the following standard regularity conditions:
\begin{itemize}
    \item[(A1)] $\Lambda$ is strictly convex.
    \item[(A2)] $\Lambda$ is continuously differentiable on the relative interior of $\Delta$.
    \item[(A3)] (For models with full support) The gradient of $\Lambda$ diverges at the boundary of $\Delta$ or admits a subgradient structure allowing for corner solutions.
\end{itemize}

\subsubsection{Convex Conjugacy and the Generalized Williams-Daly-Zachary Theorem}

The structural properties of the PUM are best understood through Fenchel-Legendre duality. The convex conjugate of the perturbation function $\Lambda(\mathbf{q})$, typically interpreted as the \textit{surplus function} or the \textit{expected maximum utility},  is defined as:
\begin{equation}
    \label{eq:pum_dual}
    \Omega(\mathbf{V}_n) = \sup_{\mathbf{q} \in \Delta} \left\{ \mathbf{q}^\top \mathbf{V}_n - \Lambda(\mathbf{q}) \right\}.
\end{equation}

By the envelope theorem and the properties of convex conjugation, the relationship between the utility vector and the choice probability is given by the gradient of the surplus function. This yields a generalized version of the Williams-Daly-Zachary theorem:

\begin{proposition}[PUM Duality]
    Under assumptions (A1)-(A2), the surplus function $\Omega(\mathbf{V}_n)$ is convex and differentiable everywhere on $\mathbb{R}^{|\mathcal{C}|}$. Furthermore, the optimal choice probability vector satisfying Eq. \eqref{eq:pum_primal} is given by:
    \begin{equation} \label{eq:duality}
        \mathbf{p}_n = \nabla_{\mathbf{V}} \Omega(\mathbf{V}_n).
    \end{equation}
\end{proposition}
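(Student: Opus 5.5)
The plan has three steps: show $\Omega$ is a finite convex function, identify its subdifferential with the set of maximizers of \eqref{eq:pum_primal} through a Danskin-type argument, and then use strict convexity (A1) to get a single maximizer and hence differentiability with $\nabla_{\mathbf{V}}\Omega(\mathbf{V}_n)=\mathbf{p}_n$. Throughout, I take $\Lambda$ to be proper, closed (lower semicontinuous) and bounded below on $\Delta$. This is implicit in (A1)--(A2) and in the model being well posed, since otherwise the maximizer in \eqref{eq:pum_primal} need not exist.

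\textbf{Step 1 (finiteness and convexity).} For each fixed $\mathbf{q}\in\Delta$, the map $\mathbf{V}\mapsto \mathbf{q}^\top\mathbf{V}-\Lambda(\mathbf{q})$ is affine. So $\Omega$, as a pointwise supremum of affine functions, is convex. It is also finite everywhere, because $\mathbf{q}^\top\mathbf{V}\le \max_i V_i$ on $\Delta$ and $\Lambda$ is bounded below on the compact set $\Delta$. A finite convex function on $\mathbb{R}^{K}$ is continuous and has a nonempty compact subdifferential at every point.

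\textbf{Step 2 (existence and uniqueness of the maximizer).} The objective in \eqref{eq:pum_primal} is upper semicontinuous on the compact set $\Delta$, so a maximizer exists. It is unique because the objective is strictly concave by (A1); call it $\mathbf{p}_n(\mathbf{V}_n)$. As a side remark, (A2) is not needed for differentiability of $\Omega$. It matters only later, for first-order characterizations of $\mathbf{p}_n$.

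\textbf{Step 3 (subgradients are maximizers, and conversely).} Extend $\Lambda$ by $+\infty$ outside $\Delta$, so that $\Omega=\Lambda^*$. By the Fenchel--Young equality for closed proper convex functions,
\[
\mathbf{g}\in\partial\Omega(\mathbf{V}) \iff \mathbf{V}\in\partial\Lambda(\mathbf{g}) \iff \Omega(\mathbf{V})+\Lambda(\mathbf{g})=\mathbf{g}^\top\mathbf{V}.
\]
The last condition says exactly that $\mathbf{g}$ attains the supremum in \eqref{eq:pum_dual}. Hence $\partial\Omega(\mathbf{V}_n)=\{\mathbf{p}_n(\mathbf{V}_n)\}$. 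A convex function whose subdifferential at a point is a singleton is differentiable there, with gradient equal to that element, which gives \eqref{eq:duality}. Because this holds at every $\mathbf{V}_n$, and the gradient of a differentiable convex function is automatically continuous, $\Omega$ is in fact $C^1$.

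An alternative to the conjugate-subdifferential route in Step 3 is Danskin's theorem, since $\Delta$ is compact and the objective is jointly upper semicontinuous and affine in $\mathbf{V}$. Either way, this step carries the only real content. The main obstacle is making sure it rests on closedness of $\Lambda$. If $\Lambda$ were not lower semicontinuous, the supremum might not be attained and the biconjugate identity would fail, so I would state that hypothesis explicitly instead of deriving it from (A1)--(A2).
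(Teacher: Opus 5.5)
Your proof is correct, and it follows the route the paper only gestures at: the paper gives no formal proof, just the remark that the result follows ``by the envelope theorem and the properties of convex conjugation.'' Your Step 3, the Fenchel--Young inversion identifying $\partial\Omega(\mathbf{V})$ with the set of maximizers, supplies the actual content, and Danskin's theorem is the envelope-theorem version of the same step.

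Your explicit lower-semicontinuity (closedness) hypothesis on $\Lambda$ is a genuinely necessary addition, not pedantry. Take $K=2$ and $\Lambda(\mathbf{q})=\tfrac{\mu}{2}\|\mathbf{q}\|_2^2$ on the relative interior of $\Delta$, with value raised by $1$ at the two vertices. This $\Lambda$ satisfies (A1)--(A2), yet when $V_1-V_2$ is large the supremum defining $\Omega$ is not attained. So (A1)--(A2) alone do not guarantee that an optimal choice probability in \eqref{eq:pum_primal} exists.
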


This duality is central to our estimation strategy. It implies that estimating $\boldsymbol{\beta}$ amounts to choosing parameters so that the probability vectors generated by Eq.\eqref{eq:duality} align with observed choice realizations under an appropriate loss function.

\subsubsection{Additive Separability and Tree-Structured PUMs}

A highly tractable subclass of PUMs arises when the perturbation function decomposes into a sum of univariate convex functions \citep{mcfadden2012theory}. This structure yields semi-analytical solutions and encompasses prevalent discrete choice models like MNL and Sparsemax. 

\begin{definition}[Additive Separable PUM] \label{defi:separable}
    A PUM is \textbf{additive separable} if its perturbation function admits the form:
    \begin{equation}
        \Lambda(\mathbf{q}) = \sum_{i \in \mathcal{C}} h(q_i),
    \end{equation}
    where $h: [0, 1] \to \mathbb{R}$ is a strictly convex, continuously differentiable function.
\end{definition}

Under this assumption, the optimal choice probability elegantly decouples via the \textit{Choice Kernel}, defined as the monotone inverse mapping $\psi(z) \triangleq (h')^{-1}(z)$. The probabilities take the form $p_i(\mathbf{V}) = \psi(V_i - \lambda)$, where the simplex multiplier $\lambda$ is uniquely determined by the implicit equation $\sum_{j \in \mathcal{C}} \psi(V_j - \lambda) = 1$.

While additive separability simplifies computation, it inherently sacrifices the ability to capture complex substitution patterns, such as nested dependencies or asymmetric cannibalization. To bridge the gap between computational tractability and expressive modeling power, we formalize a more generalized tree structure.

\begin{definition}[Tree-Structured Perturbed Utility Model] \label{defi:hierarchical}
    A PUM possesses a multi-level tree structure if the alternatives $\mathcal{C}$ form the leaf nodes of a decision tree $\mathcal{T}$, and its perturbation function additively decomposes along the tree topology:
    \begin{equation}
        \Lambda(\mathbf{q}) = \sum_{i \in \mathcal{C}} h(q_i) + \sum_{s \in \mathcal{T}_{\mathrm{int}}} \Phi_s(y_s).
    \end{equation}
    Here, $\mathcal{T}_{\mathrm{int}}$ denotes the internal nodes (macroscopic nests); $y_s = \sum_{i \in \mathcal{L}(s)} q_i$ is the aggregate probability mass of all leaves $\mathcal{L}(s)$ descending from $s$; and $h$ and $\Phi_s$ are strictly convex, continuously differentiable univariate functions capturing leaf- and node-level cognitive penalties, respectively.
\end{definition}

Note that the tree may contain multiple layers of internal nodes. For any internal node $s$,  $\mathcal{L}(s)$ denotes the set of all leaf alternatives descending from $s$, obtained by tracing the tree through all lower layers. The tree-structured PUM can be treated as a generalization of the Nested Logit model \citep{wen2001generalized} without specifying the perturbation geometry. Under this structure, the first-order optimality conditions resolve via the chain rule. Let $\mu_s \triangleq \Phi_s'(y_s)$ represent the marginal penalty at internal node $s$, and let $\mathcal{P}(i)$ denote the ancestor path from the root to leaf $i$. The choice probability conditionally decouples into a vertical sum:
\begin{equation}
    p_i(\mathbf{V}) = \psi \left( V_i - \lambda - \sum_{s \in \mathcal{P}(i)} \mu_s \right).
\end{equation}
This dual formulation reveals a profound analytical property: complex non-linear cross-coupling among alternatives is elegantly absorbed by localized, path-specific shadow prices $\mu_s$. Conditional on the tree dynamics and the global multiplier $\lambda$, the choice probability of each product $i$ reduces to an independent evaluation.

Since $\mathcal{T}$ is defined based upon a tree structure, every alternative traces a unique, unbranching ancestor path $\mathcal{P}(i)$ to the root. Consequently, this formulation precludes overlapping cross-substitution patterns, such as those found in cross-nested structures. By confining dependencies to isolated vertical chains, the model avoids dense cross-alternative coupling (e.g., \citealp{fosgerau2024inverse}), yielding computational advantages for the subsequent algorithmic tasks.

\subsubsection{Specific Instances}

The PUM framework unifies various discrete choice specifications through the choice of $\Lambda(\mathbf{q})$.

\paragraph{Multinomial Logit (MNL):} The standard MNL model is recovered when the perturbation function is the negative Shannon entropy scaled by a dispersion parameter $\mu > 0$:
\begin{equation}
    \Lambda_{\text{Logit}}(\mathbf{q}) = \mu \sum_{i \in \mathcal{C}} q_i \ln q_i.
\end{equation}
The corresponding conjugate function is the familiar Log-Sum-Exp function:
\begin{equation}
    \Omega_{\text{Logit}}(\mathbf{V}) = \mu \ln \left( \sum_{i \in \mathcal{C}} \exp\left(\frac{V_i}{\mu}\right) \right).
\end{equation}

\paragraph{The Sparsemax Model:} 
A distinct and distinctively robust instance of the PUM arises when the entropic penalty is replaced by a quadratic Euclidean norm, scaled by a dispersion parameter $\mu > 0$. Specifically, let the regularization function be defined as the scaled squared $\ell_2$-norm:
\begin{equation}
    \Lambda_{\text{Sparse}}(\mathbf{q}) = \frac{\mu}{2} ||\mathbf{q}||_2^2.
\end{equation}

Substituting this into the primal problem \eqref{eq:pum_primal} reveals that the choice probability vector is the Euclidean projection of the \textit{scaled} utility vector $\mathbf{V}_n/\mu$ onto the probability simplex $\Delta$. Mathematically, this is equivalent to solving the minimum distance problem:
\begin{equation}
    \mathbf{p}_n^{\text{Sparse}} = \arg \min_{\mathbf{q} \in \Delta} \left\| \mathbf{q} - \frac{\mathbf{V}_n}{\mu} \right\|_2^2.
\end{equation}

Unlike the Multinomial Logit model, which yields strictly positive probabilities for all alternatives (dense support), the Sparsemax model admits a closed-form solution with \textit{compact support}:
\begin{equation}
    p_{ni} = \max \left\{ 0, \frac{V_{ni}}{\mu} - \lambda (\mathbf{V}_n, \mu) \right\},
\end{equation}
where $\lambda (\mathbf{V}_n, \mu)$ is a threshold function determined effectively by the normalization constraint $\sum p_{ni} = 1$. 

Here, the parameter $\mu$ governs the sparsity of the decision: a smaller $\mu$ amplifies the utility differences (similar to a smaller scale parameter in Logit), pushing the projection towards the simplex vertices (hard-max), whereas a larger $\mu$ shrinks the effective utilities towards zero, resulting in a more uniform and denser probability distribution.

\paragraph{The Cauchy Model:}
To capture behaviors with heavier tails than the Gumbel distribution (implied by MNL), one can employ a perturbation derived from the Cauchy distribution. The specific additive regularizer is given by:
\begin{equation}
    \Lambda_{\text{Cauchy}}(\mathbf{q}) = -\frac{\mu}{\pi} \sum_{i \in \mathcal{C}} \ln \left( \cos \left( \pi \left( q_i - \frac{1}{2} \right) \right) \right).
\end{equation}
This formulation corresponds to the cumulative distribution function of the standard Cauchy distribution. The explicit probability function is derived from the first-order optimality condition $\nabla \Lambda(\mathbf{p}_n) = \mathbf{V}_n - \lambda \mathbf{1}$. Specifically, differentiating the regularization term yields the tangent link function:
\begin{equation}
    \frac{V_{ni} - \lambda}{\mu} = \tan \left( \pi \left( p_{ni} - \frac{1}{2} \right) \right).
\end{equation}
Inverting this relationship gives the choice probability:
\begin{equation}
    p_{ni} = \frac{1}{2} + \frac{1}{\pi} \arctan \left( \frac{V_{ni} - \lambda}{\mu} \right).
\end{equation}
where $\lambda$ ensures all probabilities sum up to 1.

Unlike MNL and Sparsemax, this model is generated by a heavy-tailed perturbation. This structural characteristic results in choice probabilities that decay polynomially rather than exponentially as utility decreases, maintaining a fatter tail than the standard MNL model. Consequently, the Cauchy specification is particularly effective for capturing noisy decision-making scenarios where agents retain a non-negligible likelihood of selecting low-utility alternatives, thereby accommodating significant deviations from strict utility maximization.

\section{Fenchel-Young Estimation of Perturbed Utility Models} \label{sec:FY}

Having established the structural link between the utility vector and choice probabilities via the gradient of the surplus function, we now turn to the estimation of the model parameters. Let the dataset consist of $N$ independent observations. For each observation $n$, let $\mathbf{X}_{n}$ denote the matrix of observed attributes, and let $\mathbf{y}_n \in \{0, 1\}^K$ denote the observed choice as a one-hot vector, where $y_{ni} = 1$ if alternative $i$ is chosen and $0$ otherwise.

\subsection{The Pathologies of Maximum Likelihood Estimation}

While MLE constitutes the standard approach for discrete choice analysis, its reliance on the logarithmic scoring rule proves structurally inadequate for the broader class of PUMs. This incompatibility manifests as two critical pathologies. First, in models with sparse choice kernels, the assignment of zero probability to an observed alternative triggers a mathematical singularity, causing the objective function to diverge to negative infinity. Second, even in regimes with full support, the negative log-likelihood generally fails to preserve global convexity for some perturbation functions, rendering the optimization landscape numerically unstable.

\subsubsection{The Zero-Probability Singularity}

Unlike MNL, which forces strictly positive probabilities, PUMs like Sparsemax can yield corner solutions with strictly zero probabilities. This sparsity renders the standard Maximum Likelihood Estimation (MLE) structurally undefined. For a dataset of $N$ observations, the negative log-likelihood is:
\begin{equation}
    \mathcal{L}_{\text{NLL}}(\boldsymbol{\beta}) = - \sum_{n=1}^N \sum_{i \in \mathcal{C}} y_{ni} \ln \left( \hat{p}_{ni}(\boldsymbol{\beta}) \right).
\end{equation}
If the model predicts $\hat{p}_{n, y_n}(\boldsymbol{\beta}) = 0$ for any observation, the loss suffers a mathematical singularity ($\lim_{q \to 0^+} -\ln(q) = +\infty$). The objective becomes unbounded and the gradient undefined, preventing the algorithm from navigating the parameter space.

\subsubsection{Non-Convexity}

Furthermore, MLE lacks guaranteed global convexity for general perturbations. For the negative log-likelihood to be convex with respect to linear utility parameters $\boldsymbol{\beta}$, the choice probability map $\hat{\mathbf{p}}(\mathbf{V}) = \nabla \Omega(\mathbf{V})$ must be \textit{log-concave}. While true for MNL, this fails for heavy-tailed PUMs.

\begin{example}[Non-convexity of MLE for Cauchy PUMs]
Consider a binary Cauchy PUM defined by the strictly convex additive regularizer:
\begin{equation}
    \Lambda_{\mathrm{Cauchy}}(\mathbf{q}) = -\frac{1}{\pi}\sum_{i \in \mathcal{C}} \log\Big(\cos\big(\pi(q_i-\tfrac{1}{2})\big)\Big) + C', \qquad \mathbf{q} \in \Delta.
\end{equation}
The induced choice probability, $\hat{p}_1(z) = \frac{1}{2} + \frac{1}{\pi} \arctan(z)$ (where $z = V_1 - V_2$), is \textit{not log-concave}. Its negative log-likelihood second derivative, $\frac{d^2}{dz^2}[-\ln \hat{p}_1(z)]$, becomes negative in the left tail (e.g., $z < -\sqrt{3}$), indicating local non-convexity. Consequently, standard MLE solvers may trap in local optima or diverge.
\end{example}

These structural limitations motivate a shift away from the traditional logarithmic scoring rule. As an alternative, we propose an estimation framework derived directly from the PUM's intrinsic convex conjugate structure: the Fenchel-Young loss.

\subsection{The Loss Function}

To address the above problems, and to provide a unified convex optimization framework for all PUMs, we adopt the \textit{Fenchel-Young Loss} as our estimation objective. Leveraging the duality structure of PUMs, the loss for an observation $(\mathbf{X}, \mathbf{y})$ is defined as:
\begin{equation}
    \label{eq:fyloss_def}
    \ell_{\text{FY}}(\boldsymbol{\beta}; \mathbf{X}, \mathbf{y}) \coloneqq \Omega(\mathbf{V}(\mathbf{X}; \boldsymbol{\beta})) - \mathbf{y}^\top \mathbf{V}(\mathbf{X}; \boldsymbol{\beta}),
\end{equation}
where $\mathbf{V}(\mathbf{X}; \boldsymbol{\beta}) = [\boldsymbol{\beta}^\top \mathbf{x}_1, \dots, \boldsymbol{\beta}^\top \mathbf{x}_K]^\top$ is the vector of systematic utilities, and $\Omega(\mathbf{V}) = \sup_{\mathbf{q} \in \Delta} \{ \mathbf{q}^\top \mathbf{V} - \Lambda(\mathbf{q}) \}$ is the surplus function. The term $\mathbf{y}^\top \mathbf{V}(\mathbf{X}; \boldsymbol{\beta})$ represents the utility of the observed choice via the inner product with the ground-truth one-hot vector $\mathbf{y} \in \{0, 1\}^K$.

It is necessary to clarify the theoretical justification for treating the discrete realization $\mathbf{y}$ within this convex analysis framework. The Fenchel-Young inequality is originally defined over the continuous probability simplex $\Delta$: for any potential vector $\mathbf{V}$ and any probability vector $\mathbf{q} \in \Delta$, the inequality $\Omega(\mathbf{V}) + \Lambda(\mathbf{q}) \ge \mathbf{q}^\top \mathbf{V}$ holds universally. Importantly, the realized choice $\mathbf{y}$ (a one-hot vector) represents a vertex, and thus an extreme point, of the simplex $\Delta$. Since $\mathbf{y} \in \Delta$, the inequality remains valid when directly applying the observed choice as the dual variable. The canonical Fenchel-Young loss is therefore rigorously defined as the \textit{duality gap} inherent in this inequality:
\[
\mathcal{L}_{\text{gap}}(\boldsymbol{\beta}; \mathbf{X}, \mathbf{y}) \coloneqq \underbrace{\Omega(\mathbf{V}(\mathbf{X})) + \Lambda(\mathbf{y}) - \mathbf{y}^\top \mathbf{V}(\mathbf{X})}_{\ge 0}.
\]

Crucially, the Fenchel-Young loss provides a solid statistical guarantee. By construction, the loss constitutes a \textit{proper scoring rule} generated by the convex function $\Omega$. This ensures that minimizing the expected risk inherently drives the model to recover the true conditional probabilities of the data-generating process, rather than merely fitting discrete labels.

\begin{proposition}[Proper Scoring]
\label{prop:consistency}
Let $\mathbb{E}[\mathbf{y}]=\mathbf{p}^*\in\Delta$. If there exists
$\mathbf{V}^*$ such that
\[
    \nabla\Omega(\mathbf{V}^*)=\mathbf{p}^*,
\]
then $\mathbf{V}^*$ minimizes the population Fenchel--Young risk
\[
    \mathcal{R}(\mathbf{V})
    :=
    \mathbb{E}\big[\ell_{\mathrm{FY}}(\mathbf{V};\mathbf{y})\big].
\]
Moreover, if $\Omega=\Lambda^*$ with $\Lambda$ strictly convex on $\Delta$, then the minimizer is unique
at the probability level:
\[
    \nabla\Omega(\mathbf{V})=\mathbf{p}^*.
\]
Hence the Fenchel--Young loss is strictly proper with respect to predicted choice probabilities.
\end{proposition}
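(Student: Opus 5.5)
The plan is to exploit that the Fenchel--Young loss is affine in $\mathbf{y}$, so the expectation can be taken inside explicitly. Write $\ell_{\mathrm{FY}}(\mathbf{V};\mathbf{y}) = \Omega(\mathbf{V}) - \mathbf{y}^\top\mathbf{V}$. By linearity of expectation and $\mathbb{E}[\mathbf{y}]=\mathbf{p}^*$ we get
\[
\mathcal{R}(\mathbf{V}) = \Omega(\mathbf{V}) - (\mathbf{p}^*)^\top \mathbf{V}.
\]
This is a convex function of $\mathbf{V}$, because $\Omega$ is convex by the PUM Duality proposition and the second term is linear. The same proposition says $\Omega$ is differentiable everywhere. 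Hence the first-order condition $\nabla\mathcal{R}(\mathbf{V}) = \nabla\Omega(\mathbf{V}) - \mathbf{p}^* = 0$ is both necessary and sufficient for global minimality. Any $\mathbf{V}^*$ with $\nabla\Omega(\mathbf{V}^*)=\mathbf{p}^*$ is therefore a minimizer, which proves the first claim.

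For the uniqueness claim I would avoid asserting uniqueness of $\mathbf{V}$ itself. Uniqueness in $\mathbf{V}$ fails, since $\Omega(\mathbf{V}+c\mathbf{1})=\Omega(\mathbf{V})+c$ on the simplex and $\mathbf{p}^{*\top}\mathbf{1}=1$, so $\mathcal{R}$ is invariant along $\mathbf{1}$. Instead, I would show that every minimizer $\tilde{\mathbf{V}}$ satisfies $\nabla\Omega(\tilde{\mathbf{V}})=\mathbf{p}^*$. This already follows from the first step: a minimizer of a differentiable convex function has zero gradient.

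To make the strict-propriety interpretation clean, I would also give the Bregman/duality-gap form. Subtracting the minimal value and using the Fenchel--Young equality $\Omega(\mathbf{V}^*) = \mathbf{p}^{*\top}\mathbf{V}^* - \Lambda(\mathbf{p}^*)$, which holds since $\mathbf{p}^*$ attains the sup defining $\Omega(\mathbf{V}^*)$, one obtains
\[
\mathcal{R}(\mathbf{V})-\mathcal{R}(\mathbf{V}^*) = \Omega(\mathbf{V}) + \Lambda(\mathbf{p}^*) - \mathbf{p}^{*\top}\mathbf{V} \ge 0.
\]
Equality holds iff $\mathbf{p}^*$ attains the supremum in $\Omega(\mathbf{V})$. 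By strict convexity of $\Lambda$ (A1), the maximizer in \eqref{eq:pum_primal} is unique, and by the PUM Duality proposition it equals $\nabla\Omega(\mathbf{V})$. So equality forces $\nabla\Omega(\mathbf{V})=\mathbf{p}^*$. Conversely, any $\mathbf{p}\neq\mathbf{p}^*$ in the range of $\nabla\Omega$ incurs strictly positive excess risk, which is exactly strict propriety with respect to predicted probabilities.

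The main obstacle is the justification that the Fenchel--Young equality holds at $\mathbf{V}^*$. This requires identifying $\nabla\Omega(\mathbf{V}^*)$ with the argmax in \eqref{eq:pum_primal}, which is precisely the content of the PUM Duality proposition. A second point is that when $\mathbf{p}^*$ lies on the boundary of $\Delta$ (the sparse case), we must not rely on differentiability of $\Lambda$. The argument above deliberately uses only the conjugate characterization of the argmax and never $\nabla\Lambda$, so it covers Sparsemax as well.
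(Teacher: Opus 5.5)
Your proposal is correct and follows essentially the paper's own argument. Both use linearity of expectation to get $\mathcal{R}(\mathbf{V})=\Omega(\mathbf{V})-\mathbf{p}^{*\top}\mathbf{V}$, convexity plus first-order conditions (for both the minimizing property of $\mathbf{V}^*$ and probability-level uniqueness), a nonnegative divergence form of the excess risk, and the remark that $\mathbf{V}$ is only determined up to $\operatorname{span}\{\mathbf{1}\}$. The one difference is that you keep the excess risk as the Fenchel--Young gap $\Omega(\mathbf{V})+\Lambda(\mathbf{p}^*)-\mathbf{p}^{*\top}\mathbf{V}$ and settle the equality case by uniqueness of the primal maximizer, whereas the paper rewrites it as $D_\Lambda(\mathbf{p}^*\Vert\hat{\mathbf{p}})$ via a subgradient $\mathbf{V}-\lambda\mathbf{1}\in\partial\Lambda(\hat{\mathbf{p}})$; your form avoids the subgradient bookkeeping needed at boundary (sparse) predictions.
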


\begin{proof}
See Appendix \ref{app:proofs}.
\end{proof}

In the context of parameter estimation, the term $\Lambda(\mathbf{y})$ depends exclusively on the fixed observed label $\mathbf{y}$ and is independent of the utility parameters $\boldsymbol{\beta}$. It therefore acts as an additive constant. Consequently, omitting $\Lambda(\mathbf{y})$ simplifies the operational objective function in Eq.~\eqref{eq:fyloss_def} without altering the gradient dynamics or the location of the optimal solution.

The estimation problem over a dataset of $N$ samples is formulated as the Empirical Risk Minimization (ERM) of the Fenchel-Young loss:
\begin{equation}
    \label{eq:erm_formulation}
    \min_{\boldsymbol{\beta}} \quad \mathcal{J}(\boldsymbol{\beta}) = \frac{1}{N} \sum_{n=1}^N \left( \Omega(\mathbf{V}_n; \boldsymbol{\beta}) - \mathbf{y}_n^\top \mathbf{V}_n \right),
\end{equation}
where $\mathbf{V}_n = \mathbf{V}(\mathbf{X}_n; \boldsymbol{\beta})$ denotes the utility vector for the $n$-th sample.

\subsection{Geometric Interpretation of the Fenchel-Young Estimator}

The Fenchel-Young estimation framework admits a profound geometric interpretation that links convex analysis to statistical consistency. Fundamentally, the estimation process operates on the landscape defined by the surplus function $\Omega$. A candidate parameter vector $\boldsymbol{\beta}$ maps attributes $\mathbf{x}$ to systematic utilities $\mathbf{V}$, which in turn induce a predicted probability distribution via the gradient map $\hat{\mathbf{p}} = \nabla \Omega(\mathbf{V})$. Rather than forcing a literal geometric projection of empirical labels onto the model manifold, the optimization of $\boldsymbol{\beta}$ minimizes the Fenchel-Young loss, a divergence measure rigorously defined as the duality gap between the prediction and the realization.

Although evaluating this gap against a discrete vertex $\mathbf{y}$ (the observed choice) is performed pointwise, minimizing this loss over the population drives the predicted gradients to align with the true conditional probabilities in expectation. Unlike generic loss functions (such as squared error) that may impose an extrinsic geometry mismatched with the choice model, the Fenchel-Young loss is endogenous: it is the unique Bregman divergence generated by the very same potential function $\Omega$ that governs the probability generation. This intrinsic consistency renders the Fenchel-Young estimator theoretically natural for PUMs, ensuring that the metric used for estimation is structurally congruent with the mechanism used for prediction.

\subsubsection{Bregman Divergence}

In the discrete choice setting, the observed label is represented as a one-hot vector $\mathbf{y} = \mathbf{e}_y$, and the
model prediction is the probability vector
\begin{equation}
    \hat{\mathbf{p}}
    \;=\;
    \nabla_{\mathbf{V}} \Omega(\mathbf{V})
    \;\in\; \Delta,
\end{equation}
i.e., the gradient of the surplus function. Thus $\ell_{\text{FY}}$ is
nonnegative, and it vanishes exactly when the predicted distribution
$\hat{\mathbf{p}}$ matches the target distribution $\mathbf{y}$
(so that $\mathbf{y} \in \partial \Omega(\mathbf{V})$). For one-hot labels, this condition is generally attainable only for models and utilities capable of assigning probability one to the chosen alternative. For estimation, the relevant requirement is therefore not that the loss vanish for each realized one-hot choice, but that the expected loss be minimized at the true conditional choice probabilities.

Using convex conjugacy, the Fenchel--Young loss can be written as a
Bregman divergence in probability space. Recall that, by definition of
the conjugate,
\begin{equation}
    \Omega(\mathbf{V})
    = \sup_{\mathbf{q} \in \Delta}
      \left\{ \mathbf{q}^\top \mathbf{V} - \Lambda(\mathbf{q}) \right\},
    \qquad
    \hat{\mathbf{p}} = \nabla \Omega(\mathbf{V}).
\end{equation}
By Fenchel duality and the simplex constraint, the optimality condition is
\begin{equation}
    \mathbf{V}-\lambda \mathbf{1}
    =
    \nabla \Lambda(\hat{\mathbf{p}})
\end{equation}
for some scalar multiplier $\lambda\in\mathbb{R}$. Evaluating the supremum at
$\hat{\mathbf{p}}$ gives
\begin{equation}
    \Omega(\mathbf{V})
    = \hat{\mathbf{p}}^\top \mathbf{V} - \Lambda(\hat{\mathbf{p}}).
\end{equation}
Substituting this into the Fenchel--Young gap yields
\begin{align}
    \ell_{\text{FY}}(\mathbf{V}; \mathbf{y})
    &= \big( \hat{\mathbf{p}}^\top \mathbf{V} - \Lambda(\hat{\mathbf{p}}) \big)
       + \Lambda(\mathbf{y}) - \mathbf{y}^\top \mathbf{V} \\
    &= \Lambda(\mathbf{y}) - \Lambda(\hat{\mathbf{p}})
       - (\mathbf{y} - \hat{\mathbf{p}})^\top \mathbf{V}  \\
    &= \Lambda(\mathbf{y}) - \Lambda(\hat{\mathbf{p}})
       - (\mathbf{y} - \hat{\mathbf{p}})^\top
       \big(\nabla \Lambda(\hat{\mathbf{p}})+\lambda\mathbf{1}\big) \\
    &= \Lambda(\mathbf{y}) - \Lambda(\hat{\mathbf{p}})
       - (\mathbf{y} - \hat{\mathbf{p}})^\top
       \nabla \Lambda(\hat{\mathbf{p}}),
\end{align}
where the last equality uses
$(\mathbf{y}-\hat{\mathbf{p}})^\top\mathbf{1}=0$ since
$\mathbf{y},\hat{\mathbf{p}}\in\Delta$. Hence
\begin{equation}
    \ell_{\text{FY}}(\mathbf{V}; \mathbf{y})
    =
    \mathbb{D}_{\Lambda}(\mathbf{y} \,\|\, \hat{\mathbf{p}}),
\end{equation}
where
\begin{equation}
    \mathbb{D}_{\Lambda}(\mathbf{y} \,\|\, \hat{\mathbf{p}})
    \coloneqq
    \Lambda(\mathbf{y})
    - \Lambda(\hat{\mathbf{p}})
    - (\mathbf{y} - \hat{\mathbf{p}})^\top
      \nabla \Lambda(\hat{\mathbf{p}}).
\end{equation}
Thus, whenever $\Lambda(\mathbf{y})<\infty$, the Fenchel--Young gap equals the
Bregman divergence generated by $\Lambda$.

\begin{figure}[ht!]
    \centering
    \includegraphics[width=0.85\textwidth]{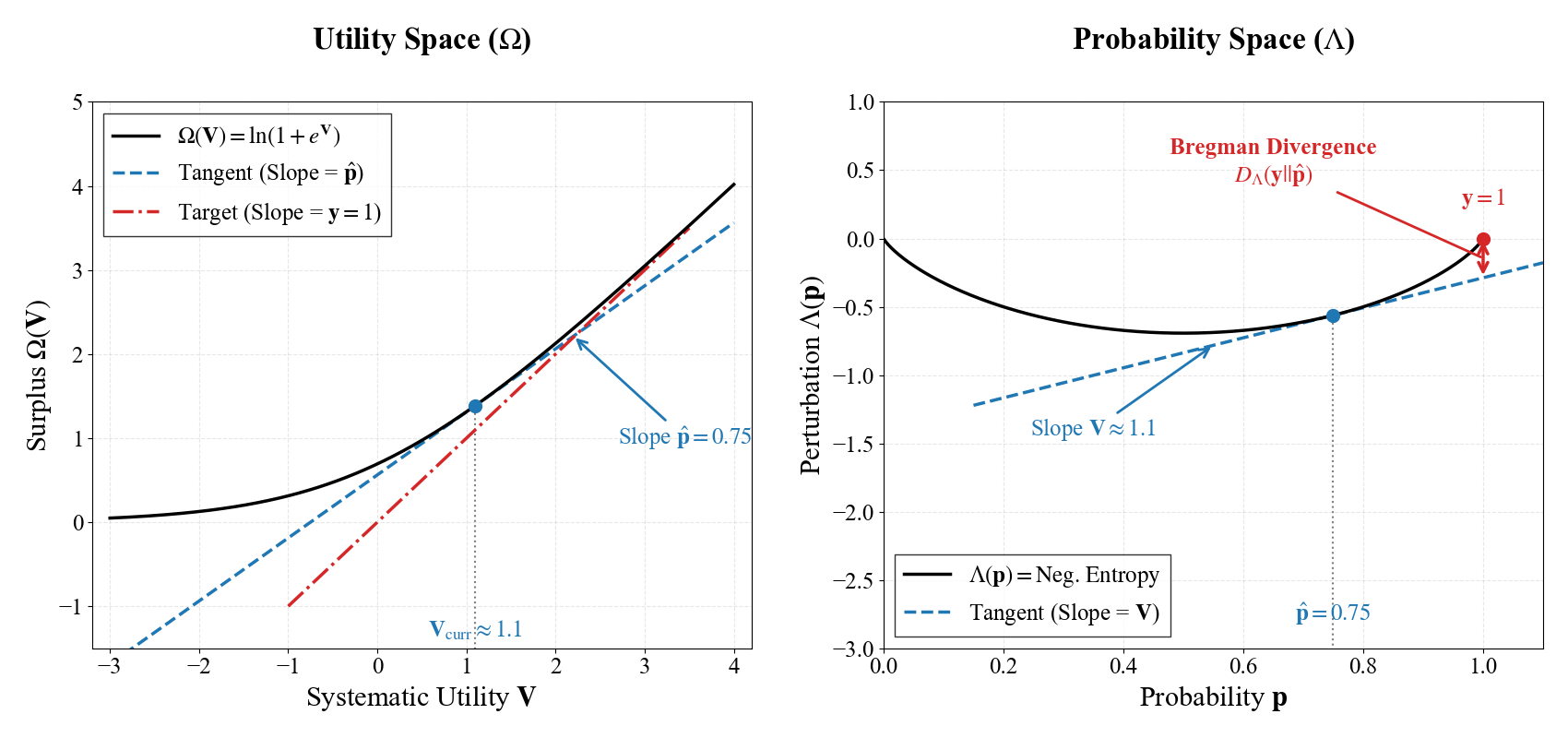}
    \caption{\textbf{Geometric Interpretation of Fenchel-Young Estimation: Duality between Utility and Probability Spaces.} 
    Left Panel: Surplus function $\Omega(\mathbf{V})$ in utility space. The prediction $\hat{\mathbf{p}}$ is the tangent slope at $\mathbf{V}_{\text{curr}}$; estimation aligns this slope with the observed label $\mathbf{y}$. 
    Right Panel: Conjugate perturbation $\Lambda(\mathbf{q})$ in probability space. The Fenchel-Young loss is the Bregman divergence $D_\Lambda(\mathbf{y} \| \hat{\mathbf{p}})$, representing the vertical gap between $\Lambda(\mathbf{y})$ and the tangent at $\hat{\mathbf{p}}$. 
    Via the Legendre-Fenchel transform, slopes in one space map directly to coordinates in the dual space.}
    \label{fig:fy_geometric_duality}
\end{figure}

\subsubsection{Geometric Picture}
The Bregman divergence $\mathbb{D}_{\Lambda}(\mathbf{y} \,\|\, \hat{\mathbf{p}})$ has a simple geometric meaning: it measures how much $\Lambda(\mathbf{y})$ exceeds the first-order (affine) approximation of $\Lambda$ around $\hat{\mathbf{p}}$. In other words, it is the vertical gap between the value of $\Lambda$ at $\mathbf{y}$ and the tangent hyperplane of $\Lambda$ at $\hat{\mathbf{p}}$. This gap is always nonnegative by convexity, and it is zero if and only if $\mathbf{y} = \hat{\mathbf{p}}$.

From this viewpoint, minimizing the Fenchel--Young loss over the dataset is equivalent to projecting the empirical distribution (the aggregate of observed one-hot labels) onto the manifold of model distributions
\[
    \mathcal{M}
    = \left\{
        \hat{\mathbf{p}}(\cdot;\boldsymbol{\beta})
        = \nabla \Omega(\mathbf{V}(\cdot; \boldsymbol{\beta}))
        \;:\;
        \boldsymbol{\beta} \in \mathbb{R}^d
      \right\}
\]
under the Bregman geometry induced by $\Lambda$. Specifically, the estimator $\hat{\boldsymbol{\beta}}$ seeks the distribution on $\mathcal{M}$ that is closest to the data-generating process in terms of the expected divergence. Rather than matching each realization pointwise, this process constitutes an \emph{information projection} of the observed statistics onto the model family, aligning the predicted probabilities with the realized choices in expectation.

\subsubsection{Equivalence to Maximum Likelihood Estimation for Multinomial Logit}

For the Shannon-entropy regularizer $\Lambda_{\text{Shannon}}(\mathbf{q}) = \mu \sum_i q_i \log q_i$ with a smoothing parameter $\mu > 0$, the convex conjugate is the scaled log-sum-exp potential $\Omega_{\mu}(\mathbf{V}) = \mu \log \sum_i \exp(V_i / \mu)$, and the induced PUM coincides with the MNL model with temperature $\mu$. In this case, the Fenchel--Young loss reduces exactly to the negative log-likelihood scaled by $\mu$.

\begin{proposition}[FY vs.\ MLE for Multinomial Logit]
\label{prop:FY_MLE_equiv_logit}
Consider the MNL model with utilities $V_{ni}(\boldsymbol{\beta}) = \boldsymbol{\beta}^\top \mathbf{x}_{ni}$ and temperature $\mu$, where the choice probabilities are given by
\[
\hat{p}_{ni}(\boldsymbol{\beta})
=
\frac{\exp\big(V_{ni}(\boldsymbol{\beta}) / \mu\big)}{\sum_{j} \exp\big(V_{nj}(\boldsymbol{\beta}) / \mu\big)}.
\]
For an observation $(\mathbf{X}_n, y_n)$, the associated Fenchel--Young loss with scaled Shannon entropy is
\[
\ell_{\mathrm{FY}}(\boldsymbol{\beta};\mathbf{X}_n,y_n)
=
\mu \log\!\sum_{j} \exp\big(\boldsymbol{\beta}^\top \mathbf{x}_{nj} / \mu\big)
-
\boldsymbol{\beta}^\top \mathbf{x}_{n y_n}.
\]
This coincides with $\mu$ times the negative log-likelihood: $\ell_{\mathrm{FY}} = \mu \cdot \left( -\log \hat{p}_{n y_n}(\boldsymbol{\beta}) \right)$. Since $\mu$ is a strictly positive constant, the FY estimator and the maximum likelihood estimator for this model are identical:
\[
\hat{\boldsymbol{\beta}}_{\mathrm{FY}} \;=\; \hat{\boldsymbol{\beta}}_{\mathrm{MLE}}.
\]
\end{proposition}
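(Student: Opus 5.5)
The argument is a direct computation in three steps, with essentially no real obstacle.

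First, we need the surplus function for the scaled entropy perturbation. From the MNL instance in Section~\ref{sec:PUM}, $\Lambda(\mathbf{q}) = \mu\sum_i q_i\ln q_i$ has conjugate $\Omega_\mu(\mathbf{V}) = \mu\ln\sum_j\exp(V_j/\mu)$. To make this self-contained, maximize $\mathbf{q}^\top\mathbf{V} - \mu\sum_i q_i\ln q_i$ over $\Delta$ with a Lagrange multiplier for $\sum_i q_i = 1$. Nonnegativity does not bind, because the entropy gradient diverges at the boundary. The stationarity condition gives $q_i \propto \exp(V_i/\mu)$, which is the softmax $\hat{p}_{ni}$ stated in the proposition. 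Substituting back yields the log-sum-exp. Strict concavity of the objective makes this maximizer unique, and this agrees with the PUM Duality proposition, $\hat{\mathbf{p}} = \nabla\Omega_\mu(\mathbf{V})$.

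Second, insert this into the definition \eqref{eq:fyloss_def}. Since $\mathbf{y}_n = \mathbf{e}_{y_n}$, we have $\mathbf{y}_n^\top\mathbf{V}_n = \boldsymbol{\beta}^\top\mathbf{x}_{ny_n}$, which gives the displayed formula for $\ell_{\mathrm{FY}}$. Then
\[
-\mu\log\hat{p}_{ny_n} = -\mu\left(\frac{\boldsymbol{\beta}^\top\mathbf{x}_{ny_n}}{\mu} - \log\sum_j\exp\big(\boldsymbol{\beta}^\top\mathbf{x}_{nj}/\mu\big)\right),
\]
and the right-hand side equals $\ell_{\mathrm{FY}}$ term by term. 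It is worth noting that the dropped constant $\Lambda(\mathbf{y}_n) = \mu\cdot 1\ln 1 = 0$ under the convention $0\ln 0 = 0$. So even the full duality-gap loss $\mathcal{L}_{\text{gap}}$ equals $\mu$ times the negative log-likelihood exactly, not merely up to a constant.

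Third, sum over $n$. This gives $\mathcal{J}(\boldsymbol{\beta}) = \frac{\mu}{N}\mathcal{L}_{\text{NLL}}(\boldsymbol{\beta})$. Multiplying an objective by a positive constant does not change its set of minimizers, so $\arg\min\mathcal{J} = \arg\min\mathcal{L}_{\text{NLL}}$.

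The only point needing care is the meaning of ``$\hat{\boldsymbol{\beta}}_{\mathrm{FY}} = \hat{\boldsymbol{\beta}}_{\mathrm{MLE}}$'' when minimizers are not unique or do not exist, for example under perfect separation. I would phrase the conclusion as equality of the argmin sets. I would then note that under the usual full-rank condition on the attribute differences $\mathbf{x}_{ni} - \mathbf{x}_{nj}$, the log-sum-exp composed with a linear map is strictly convex in $\boldsymbol{\beta}$. Under that condition, whenever a minimizer exists it is unique, and the two estimators coincide as points.
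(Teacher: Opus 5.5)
Your proof is correct and follows the same route as the paper: the paper also substitutes the log-sum-exp conjugate of the scaled entropy into the Fenchel--Young loss and reads off $\mu$ times the negative log-likelihood, offering the Bregman/KL identity only as intuition. Your extra remarks are accurate refinements: $\Lambda(\mathbf{e}_{y_n})=0$ makes the identity exact rather than merely up to an additive constant, and the conclusion is properly an equality of $\arg\min$ sets, which are singletons under the full-rank condition on $\mathbf{x}_{ni}-\mathbf{x}_{nj}$.
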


Intuitively, this equivalence follows from the fact that the Bregman divergence induced by the scaled negative Shannon entropy is exactly the scaled Kullback--Leibler divergence. For one-hot labels $\mathbf{e}_{y_n}$ and model probabilities $\hat{\mathbf{p}}_n(\boldsymbol{\beta})$, the Fenchel--Young loss can be written as
\[
\ell_{\mathrm{FY}}(\boldsymbol{\beta};\mathbf{X}_n,y_n)
=
\mu \, \mathbb{D}_{\mathrm{KL}}\!\big(\mathbf{e}_{y_n}\,\Vert\,\hat{\mathbf{p}}_n(\boldsymbol{\beta})\big),
\]
up to an additive constant independent of $\boldsymbol{\beta}$. Minimizing the average FY loss therefore amounts to minimizing the empirical KL divergence between empirical label distributions and model predictions, which is equivalent to the maximum likelihood estimation for the multinomial logit model.

\subsection{Convexity}

A major advantage of the Fenchel-Young estimation framework is that the convexity of the optimization problem follows directly from the definition of PUMs, without requiring complex analysis of the Hessian or log-concavity conditions.

\begin{proposition}[Global Convexity of PUM Estimation] \label{prop:convexity}
    For any PUM defined by a convex regularizer $\Lambda$, the estimation problem \eqref{eq:erm_formulation} is a convex optimization problem.
\end{proposition}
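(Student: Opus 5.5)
The plan is to show that each summand of $\mathcal{J}(\boldsymbol{\beta})$ in \eqref{eq:erm_formulation} is a convex function of $\boldsymbol{\beta}$; the average of convex functions is then convex, and an unconstrained minimization over $\boldsymbol{\beta}\in\mathbb{R}^d$ of a convex function is a convex program. Each summand splits into two pieces, $\Omega(\mathbf{V}_n(\boldsymbol{\beta}))$ and $-\mathbf{y}_n^\top\mathbf{V}_n(\boldsymbol{\beta})$. I will treat them separately, using that $\mathbf{V}_n(\boldsymbol{\beta}) = \mathbf{X}_n^\top\boldsymbol{\beta}$ is linear (affine) in $\boldsymbol{\beta}$ under the linear-in-parameters specification $V_{ni}=\boldsymbol{\beta}^\top\mathbf{x}_{ni}$.

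First, I establish convexity of $\Omega$ on $\mathbb{R}^K$ directly from its definition \eqref{eq:pum_dual}. For each fixed $\mathbf{q}\in\Delta$, the map $\mathbf{V}\mapsto \mathbf{q}^\top\mathbf{V}-\Lambda(\mathbf{q})$ is affine in $\mathbf{V}$. Hence $\Omega$ is a pointwise supremum of affine functions and is therefore convex. This step needs neither strict convexity nor differentiability of $\Lambda$, which is why only a convex $\Lambda$ is assumed; under (A1)--(A2) it is also what the PUM Duality proposition asserts.

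Next I check finiteness, so that $\Omega$ is a proper, real-valued convex function. Since $\Delta$ is compact, $\mathbf{q}^\top\mathbf{V}$ is bounded above by $\max_i V_i$ over $\Delta$. It therefore suffices that $\Lambda$ is bounded below on $\Delta$, which holds for a convex lower semicontinuous $\Lambda$ on a compact set, and is not identically $+\infty$. I expect this to be the only delicate point. The resolution is to take $\Lambda$ proper (finite at some point of $\Delta$), which is implicit in the model being well defined. Then $\Omega(\mathbf{V})\in\mathbb{R}$ for all $\mathbf{V}$.

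Finally, I compose. The composition of a convex function with an affine map is convex, so $\boldsymbol{\beta}\mapsto\Omega(\mathbf{X}_n^\top\boldsymbol{\beta})$ is convex: for $\theta\in[0,1]$, $\Omega(\mathbf{X}_n^\top(\theta\boldsymbol{\beta}+(1-\theta)\boldsymbol{\beta}')) = \Omega(\theta\mathbf{V}+(1-\theta)\mathbf{V}')\le\theta\Omega(\mathbf{V})+(1-\theta)\Omega(\mathbf{V}')$. The term $-\mathbf{y}_n^\top\mathbf{X}_n^\top\boldsymbol{\beta}$ is linear, hence convex. Summing, averaging over $n$ with the positive weight $1/N$, and noting that the feasible set $\mathbb{R}^d$ is convex, gives the claim. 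As a remark, I will also note that strict convexity in $\boldsymbol{\beta}$ is not asserted. It would additionally require identification, for instance the stacked design having full rank modulo the $\mathbf{1}$ direction, along which $\Omega(\mathbf{V}+c\mathbf{1})=\Omega(\mathbf{V})+c$ is only affine.
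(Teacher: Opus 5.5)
Your argument is correct and is essentially the paper's proof. The paper argues that $\Omega$ is convex as a conjugate (you spell out the pointwise-supremum-of-affine-maps reason that the paper only cites), that convexity survives composition with the linear map $\boldsymbol{\beta}\mapsto\mathbf{V}_n(\boldsymbol{\beta})$, and that $-\mathbf{y}_n^\top\mathbf{V}_n(\boldsymbol{\beta})$ is linear. Your finiteness check is sound (the paper does it separately in the proof of Lemma~\ref{lem:fy_cont_dom}), and so is your remark that strict convexity can fail along the $\mathbf{1}$ direction because $\Omega(\mathbf{V}+c\mathbf{1})=\Omega(\mathbf{V})+c$; the paper's proof of this proposition omits both.
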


\begin{proof}
    The objective function is a sum of terms $\ell_{\text{FY}}(\boldsymbol{\beta}; \mathbf{X}, \mathbf{y})$. 
    Recall that $\Omega(\mathbf{V})$ is defined as the convex conjugate (Fenchel-Legendre transform) of the regularizer $\Lambda(\mathbf{q})$. A fundamental result in convex analysis states that the conjugate of any function is always a convex function.
    
    Since the systematic utility $\mathbf{V}(\mathbf{X})$ is a linear function of the parameters $\boldsymbol{\beta}$, and convexity is preserved under affine composition, the term $\Omega(\mathbf{V}(\boldsymbol{\beta}))$ is convex with respect to $\boldsymbol{\beta}$.
    The second term, $-\boldsymbol{\beta}^\top \mathbf{x}_{n, y_n}$, is linear in $\boldsymbol{\beta}$.
    
    Therefore, the loss function $\ell_{\text{FY}}$, being the sum of a convex function and a linear function, is globally convex with respect to $\boldsymbol{\beta}$. This guarantees that any local minimum of \eqref{eq:erm_formulation} is a global minimum, ensuring the feasibility of numerical optimization.
\end{proof}


\section{Solution Approach} \label{sec:FY_solution}

Having established the global convexity of the Fenchel--Young estimation problem in Proposition \ref{prop:convexity}, the optimization landscape is guaranteed to be free of local minima. Consequently, the estimation task simplifies to a standard convex optimization problem, where the global optimum can be reliably recovered via iterative first-order methods, provided that the gradient of the objective function is computable. A remarkable property of the proposed framework is that, for any admissible PUM, regardless of the specific complexity of the choice kernel, the gradient with respect to the utility parameters $\boldsymbol{\beta}$ always simplifies to a unified and physically interpretable form:
\begin{equation}
    \label{eq:general_gradient}
    \nabla_{\boldsymbol{\beta}} \mathcal{J}(\boldsymbol{\beta})
    =
    \frac{1}{N} \sum_{n=1}^N
    \left(
        \underbrace{\sum_{i \in \mathcal{C}} \hat{p}_{ni}(\boldsymbol{\beta}) \, \mathbf{x}_{ni}}_{\text{Predicted Expected Features}}
        \;-\;
        \underbrace{\mathbf{x}_{n, y_n}}_{\text{Observed Features}}
    \right).
\end{equation}
This formulation reveals that the optimization process is fundamentally driven by moment matching: the solver iteratively adjusts $\boldsymbol{\beta}$ to align the model's predicted aggregate attributes with the empirical observations, with the specific 
of the PUM entering solely through the calculation of the probability weights $\hat{p}_{ni}(\boldsymbol{\beta})$.

Crucially, while the gradient structure remains unified, the computational complexity of evaluating the probability weights $\hat{p}_{ni}(\boldsymbol{\beta})$ varies across specifications. Unlike MNL, which admits a closed-form expression, determining the choice probabilities for a general perturbation function $\Lambda$ necessitates solving the primal convex optimization problem inherent to the PUM definition. This implies that the gradient evaluation involves an embedded optimization layer: for every observation $n$ at every iterative step, the solver must numerically retrieve the optimal vector $\hat{\mathbf{p}}_n$. Although this nested structure does not compromise the global convergence properties of the outer algorithm, it inevitably increases the computational cost per iteration, potentially creating a bottleneck for large-scale applications.

Fortunately, within the Fenchel--Young framework, we can circumvent this computational bottleneck by exploiting the variational definition of the conjugate function $\Omega$. Instead of treating the probability evaluation as a subroutine to be solved to optimality at every step, we can substitute the definition of $\Omega(\mathbf{V}_n)$ back into the empirical risk objective. This allows us to reformulate the original nested minimization problem into a single, unified \textit{convex-concave saddle point problem}. By lifting the probability vectors from implicit mappings to explicit optimization variables, we arrive at the following minimax formulation:
\begin{equation}
    \label{eq:saddle_point_formulation}
    \min_{\boldsymbol{\beta} \in \mathbb{R}^d} \max_{\mathbf{q}^{\circ} \in \Delta^{N}}
    \quad
    \mathcal{L}(\boldsymbol{\beta}, \mathbf{q}^{\circ})
    :=
    \frac{1}{N} \sum_{n=1}^N
    \left[
        \big(\mathbf{q}_n - \mathbf{y}_n\big)^\top \mathbf{V}_n(\boldsymbol{\beta})
        - \Lambda(\mathbf{q}_n)
    \right],
\end{equation}
where $\mathbf{q}^{\circ} = (\mathbf{q}_1, \dots, \mathbf{q}_N)$ denotes the stacked vector of generic simplex variables for all observations, and $\Delta^{N} = \Delta \times \dots \times \Delta$ is the Cartesian product of simplexes. 

It can be verified that this objective is globally convex (specifically, linear) with respect to $\boldsymbol{\beta}$ and globally concave with respect to the collection of probability vectors $\mathbf{q}^{\circ}$. To address these challenges while strictly avoiding expensive inner-loop optimization, we employ the \textit{projected extragradient method} \citep{mokhtari2020convergence}. This algorithm utilizes a prediction-correction mechanism to stabilize the update trajectory and guarantees global convergence for smooth operators.

Let $\tau > 0$ and $\sigma > 0$ denote the primal and dual step sizes, respectively. At iteration $k$, we first compute a temporary look-ahead point $(\tilde{\boldsymbol{\beta}}^{(k)}, \tilde{\mathbf{q}}^{\circ (k)})$ by taking a standard gradient step from the current position:
\begin{subequations}
\begin{align}
    \tilde{\mathbf{q}}_n^{(k)} &= \mathcal{P}_{\Delta} \left( \mathbf{q}_n^{(k)} + \sigma \left[ \mathbf{V}_n(\boldsymbol{\beta}^{(k)}) - \nabla \Lambda(\mathbf{q}_n^{(k)}) \right] \right), \quad \forall n=1,\dots,N, \\
    \tilde{\boldsymbol{\beta}}^{(k)} &= \boldsymbol{\beta}^{(k)} - \tau \left[ \frac{1}{N} \sum_{n=1}^N \mathbf{X}_n \left( \mathbf{q}_n^{(k)} - \mathbf{y}_n \right) \right],
\end{align}
\end{subequations}
where $\mathcal{P}_{\Delta}(\cdot)$ denotes the Euclidean projection onto the probability simplex, and $\nabla \Lambda(\cdot)$ is the gradient of the perturbation function. Note that the bracket term in the second equation corresponds precisely to the gradient of the primal objective $\nabla J(\boldsymbol{\beta})$, i.e., Eq.(\ref{eq:general_gradient}), but here the probability vector $\mathbf{q}_n$ is treated as an independent dual variable rather than an implicit function of $\boldsymbol{\beta}$.

We then update the variables using the gradients evaluated at the predicted point, rather than the current point, to damp oscillations:
\begin{subequations}
\label{eq:extragradient_update}
\begin{align}
    \mathbf{q}_n^{(k+1)} &= \mathcal{P}_{\Delta} \left( \mathbf{q}_n^{(k)} + \sigma \left[ \mathbf{V}_n(\tilde{\boldsymbol{\beta}}^{(k)}) - \nabla \Lambda(\tilde{\mathbf{q}}_n^{(k)}) \right] \right), \quad \forall n=1,\dots,N, \\
    \boldsymbol{\beta}^{(k+1)} &= \boldsymbol{\beta}^{(k)} - \tau \left[ \frac{1}{N} \sum_{n=1}^N \mathbf{X}_n \left( \tilde{\mathbf{q}}_n^{(k)} - \mathbf{y}_n \right) \right].
\end{align}
\end{subequations}

According to \citet{mokhtari2020convergence}, the above projected extragradient method can achieve a convergence rate of $\mathcal{O}(1/k)$, which is sublinear. 
However, for a major subclass of additive separable PUMs (including Sparsemax, Cauchy, among others), we can exploit the structural decomposability of the regularizer to achieve substantially faster convergence. 
In these cases, the perturbation function separates as $\Lambda(\mathbf{q}) = \sum_{i \in \mathcal{C}} h(q_i)$, where $h$ is a strictly convex scalar function. This structure decouples the primal optimization problem, allowing the optimal choice probabilities $\hat{\mathbf{p}}_{n}(\boldsymbol{\beta})$ to be recovered through a computationally efficient, semi-analytical mapping rather than a high-dimensional iterative solver. 
Specifically, the probability for alternative $i$ is given by the inverse of the derivative of the regularizer:
\begin{equation}
    \hat{p}_{ni}(\boldsymbol{\beta}) = \psi \left( V_{ni}(\boldsymbol{\beta}) - \lambda_n \right), \quad \text{with} \quad \psi = (h')^{-1},
\end{equation}
where $\lambda_n \in \mathbb{R}$ is the unique Lagrange multiplier (normalization constant) determined by solving the scalar root-finding equation:
\begin{equation}
    \sum_{j \in \mathcal{C}} \psi \left( V_{nj}(\boldsymbol{\beta}) - \lambda_n \right) = 1.
\end{equation}

The numerical resolution of this equation is both well-posed and highly efficient. Since the mapping $\psi(\cdot)$ is strictly increasing, the aggregate function $G(\lambda_n) = \sum_{j \in \mathcal{C}} \psi(V_{nj} - \lambda_n) - 1$ is strictly monotonically decreasing with respect to $\lambda_n$. This monotonicity guarantees the existence and uniqueness of the Lagrange multiplier, enabling the use of standard root-finding algorithms with guaranteed convergence. More aggressively, if $\psi$ is differentiable, Newton's method can be employed to achieve a quadratic convergence rate, effectively doubling the digits of precision at every step and typically resolving $\lambda_n$ to machine tolerance within a few iterations.

Consequently, since the inner forward problem is reduced to a low-cost scalar operation, the computational bottleneck associated with the nested optimization is effectively eliminated. By combining this rapid evaluation of choice probabilities with the analytical gradient derived in Eq.(\ref{eq:general_gradient}), we can solve the Fenchel--Young estimation problem using advanced gradient-based optimization algorithms that exploit the objective's smoothness and convexity. A quintessential example is Nesterov's Accelerated Gradient (NAG; \citealp{nesterov1983method}). Unlike standard gradient descent, NAG incorporates a momentum term to correct the search trajectory based on previous iterations. This modification allows the algorithm to achieve the theoretically optimal convergence rate of $\mathcal{O}(1/k^2)$ for smooth convex functions, ensuring that the global optimum is reached with significantly fewer gradient evaluations.


\section{Asymptotic Properties of Fenchel-Young Estimators} \label{sec:asymptotic}

In this section, we establish the large-sample properties of the Fenchel--Young estimator for utility parameters under a fixed perturbation function. Our analysis demonstrates that under mild regularity conditions and correct model specification, the Fenchel-Young estimator converges in probability to the data-generating parameter. Furthermore, under stronger local smoothness and nonsingularity conditions, we derive its asymptotic normality, establishing that after appropriate rescaling by $\sqrt{N}$, the estimation error converges in distribution to a multivariate normal law with a well-defined covariance matrix. Together, these theoretical results provide the essential mathematical machinery and rigorous guarantees necessary for robust statistical inference within the Fenchel-Young estimation framework. We further note that these results provide the statistical foundation for the fixed-perturbation estimation problem and for the inner estimation problem used later in PBE; they do not, by themselves, establish consistency or asymptotic normality of the learned perturbation structure in the full PBE formulation.

We proceed by presenting two lemmas first.

\begin{lemma}[Continuity and domination of the Fenchel-Young loss]
    \label{lem:fy_cont_dom}
    Let $\mathcal{B}\subset\mathbb{R}^d$ be compact and
    $\ell_{\mathrm{FY}}(\boldsymbol{\beta};\mathbf{X},\mathbf{y})
    =\Omega(\mathbf{X}\boldsymbol{\beta})-\mathbf{y}^{\top}\mathbf{X}\boldsymbol{\beta}$.
    Assume that $\Lambda$ is proper, closed, convex, and bounded below on $\Delta$, and that
    $\operatorname{dom}\Lambda\cap\Delta\neq\varnothing$. If
    $\mathbb{E}_{\mathbb{P}^\star}[\|\mathbf{X}\|]<\infty$, then:
    \begin{enumerate}
        \item $\boldsymbol{\beta}\mapsto
        \ell_{\mathrm{FY}}(\boldsymbol{\beta};\mathbf{X},\mathbf{y})$
        is continuous on $\mathcal{B}$ for every $(\mathbf{X},\mathbf{y})$;
        \item there exists an integrable envelope $M$ such that
        \[
            \bigl|\ell_{\mathrm{FY}}(\boldsymbol{\beta};\mathbf{X},\mathbf{y})\bigr|
            \leq M(\mathbf{X},\mathbf{y}),
            \qquad
            \forall \boldsymbol{\beta}\in\mathcal{B},
        \]
        for $\mathbb{P}^\star$-almost every $(\mathbf{X},\mathbf{y})$.
    \end{enumerate}
\end{lemma}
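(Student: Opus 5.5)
The plan is to exploit two elementary facts about $\Omega=\Lambda^*$ restricted to the simplex: it is finite everywhere, and it is globally Lipschitz with constant controlled by the diameter of $\Delta$. Set $m:=\inf_{\mathbf{q}\in\Delta}\Lambda(\mathbf{q})>-\infty$; this is finite because $\Lambda$ is bounded below. Fix some $\mathbf{q}_0\in\operatorname{dom}\Lambda\cap\Delta$. Then for every $\mathbf{V}\in\mathbb{R}^K$ we have the two-sided bound
\[
    \mathbf{q}_0^\top\mathbf{V}-\Lambda(\mathbf{q}_0)\;\le\;\Omega(\mathbf{V})\;\le\;\max_{\mathbf{q}\in\Delta}\mathbf{q}^\top\mathbf{V}-m\;\le\;\|\mathbf{V}\|_\infty-m .
\]
Hence $\Omega$ is a real-valued convex function on all of $\mathbb{R}^K$. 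Moreover, since every $\mathbf{q}\in\Delta$ has $\|\mathbf{q}\|_1=1$, taking the difference of suprema gives $|\Omega(\mathbf{V})-\Omega(\mathbf{V}')|\le\sup_{\mathbf{q}\in\Delta}|\mathbf{q}^\top(\mathbf{V}-\mathbf{V}')|\le\|\mathbf{V}-\mathbf{V}'\|_\infty$. So $\Omega$ is $1$-Lipschitz in the sup norm. (Alternatively, one can invoke the standard fact that a finite convex function on $\mathbb{R}^K$ is continuous, but the Lipschitz bound is more useful for the domination step.)

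For part 1, continuity in $\boldsymbol\beta$ follows immediately. The map $\boldsymbol\beta\mapsto\mathbf{X}\boldsymbol\beta$ is linear, $\Omega$ is continuous, and $\mathbf{y}^\top\mathbf{X}\boldsymbol\beta$ is linear. No closedness of $\Lambda$ is needed here beyond guaranteeing that $\Omega$ is well defined as a conjugate.

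For part 2, let $R:=\sup_{\boldsymbol\beta\in\mathcal{B}}\|\boldsymbol\beta\|<\infty$ by compactness, and use an operator norm so that $\|\mathbf{X}\boldsymbol\beta\|_\infty\le c\|\mathbf{X}\|\,\|\boldsymbol\beta\|$ for a constant $c$ depending only on the chosen norms.

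\begin{itemize}
    \item \emph{Upper bound.} Combining the two-sided bound on $\Omega$ with $|\mathbf{y}^\top\mathbf{X}\boldsymbol\beta|\le\|\mathbf{X}\boldsymbol\beta\|_\infty$ (since $\mathbf{y}$ is one-hot) gives $\ell_{\mathrm{FY}}\le 2cR\|\mathbf{X}\|-m$.
    \item \emph{Lower bound.} The lower bound on $\Omega$ gives $\ell_{\mathrm{FY}}\ge(\mathbf{q}_0-\mathbf{y})^\top\mathbf{X}\boldsymbol\beta-\Lambda(\mathbf{q}_0)\ge-2cR\|\mathbf{X}\|-\Lambda(\mathbf{q}_0)$. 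Alternatively, the Fenchel--Young inequality gives $\ell_{\mathrm{FY}}\ge-\Lambda(\mathbf{y})$, but that fails if $\mathbf{y}\notin\operatorname{dom}\Lambda$. For this reason I will use $\mathbf{q}_0$ instead.
\end{itemize}

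Therefore the envelope
\[
    M(\mathbf{X},\mathbf{y}):=2cR\|\mathbf{X}\|+|m|+|\Lambda(\mathbf{q}_0)|
\]
works for every $(\mathbf{X},\mathbf{y})$, not merely almost every one. It is integrable because $\mathbb{E}_{\mathbb{P}^\star}\|\mathbf{X}\|<\infty$ and the remaining terms are finite constants.

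The only delicate point is the finiteness of $\Omega$ and the lower bound when $\Lambda$ may take the value $+\infty$ at the vertices. This is exactly why I anchor the argument at an interior-domain point $\mathbf{q}_0$ rather than at $\mathbf{y}$. I should also take care with measurability of $M$, which is immediate since $M$ is a continuous function of $\|\mathbf{X}\|$.
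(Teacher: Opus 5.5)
Your proof is correct and follows the paper's argument essentially step for step. Both use the same two-sided bound $\mathbf{q}_0^\top\mathbf{V}-\Lambda(\mathbf{q}_0)\le\Omega(\mathbf{V})\le\|\mathbf{V}\|_\infty-\inf_{\Delta}\Lambda$, anchored at a point $\mathbf{q}_0\in\operatorname{dom}\Lambda\cap\Delta$; the same envelope $2R\|\mathbf{X}\|$ plus a constant; and continuity from finiteness of the convex function $\Omega$ composed with a linear map. Your extra $1$-Lipschitz bound on $\Omega$ and the explicit norm constant $c$ are harmless refinements that the paper leaves implicit.
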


\begin{proof}
    See Appendix \ref{app:proofs}.
\end{proof}

\begin{lemma}[Identifiability of the PUM parameter]
    \label{lem:pum_identifiable}
    Assume that $\Lambda$ is differentiable at the choice probabilities generated by the model.
    Suppose the normalized utility specification is identifiable:
    \[
        \mathbf{V}(\mathbf{X};\boldsymbol{\beta}_1)
        -
        \mathbf{V}(\mathbf{X};\boldsymbol{\beta}_2)
        \in \operatorname{span}\{\mathbf{1}\}
        \quad \text{a.s.}
        \;\Longrightarrow\;
        \boldsymbol{\beta}_1=\boldsymbol{\beta}_2.
    \]
    Then
    \[
        \hat{\mathbf{p}}(\mathbf{X};\boldsymbol{\beta}_1)
        =
        \hat{\mathbf{p}}(\mathbf{X};\boldsymbol{\beta}_2)
        \quad \text{a.s.}
        \;\Longrightarrow\;
        \boldsymbol{\beta}_1=\boldsymbol{\beta}_2.
    \]
\end{lemma}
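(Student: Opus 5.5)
The plan is to show that equal predicted probabilities force the two utility vectors to differ only by a multiple of $\mathbf{1}$. The maintained identifiability hypothesis then gives $\boldsymbol{\beta}_1=\boldsymbol{\beta}_2$. Fix a realization $\mathbf{X}$ in the almost-sure set on which $\hat{\mathbf{p}}(\mathbf{X};\boldsymbol{\beta}_1)=\hat{\mathbf{p}}(\mathbf{X};\boldsymbol{\beta}_2)=:\hat{\mathbf{p}}$, and write $\mathbf{V}_k=\mathbf{V}(\mathbf{X};\boldsymbol{\beta}_k)$. By the PUM Duality proposition, $\hat{\mathbf{p}}$ is the maximizer in \eqref{eq:pum_primal} for both $\mathbf{V}_1$ and $\mathbf{V}_2$.

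First I write the KKT conditions of \eqref{eq:pum_primal} at $\hat{\mathbf{p}}$. Since $\Lambda$ is assumed differentiable at the generated choice probabilities, the first-order condition for maximizing $\mathbf{q}^\top\mathbf{V}_k-\Lambda(\mathbf{q})$ over $\Delta$ reads
\[
\mathbf{V}_k-\nabla\Lambda(\hat{\mathbf{p}})=\lambda_k\mathbf{1}-\boldsymbol{\nu}_k,\qquad \boldsymbol{\nu}_k\ge 0,\quad \nu_{k,i}\hat p_i=0 .
\]
This is the same optimality condition used in the Bregman divergence derivation, now with the nonnegativity multipliers written explicitly. Subtracting the two conditions eliminates the common term $\nabla\Lambda(\hat{\mathbf{p}})$ and gives $\mathbf{V}_1-\mathbf{V}_2=(\lambda_1-\lambda_2)\mathbf{1}-(\boldsymbol{\nu}_1-\boldsymbol{\nu}_2)$.

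On the support $\{i:\hat p_i>0\}$ the multipliers $\boldsymbol{\nu}_k$ vanish, so the difference $\mathbf{V}_1-\mathbf{V}_2$ is constant there. When $\hat{\mathbf{p}}$ lies in the relative interior of $\Delta$, as under full support (A3), the difference is constant in every coordinate. Hence it lies in $\operatorname{span}\{\mathbf{1}\}$ almost surely, and the hypothesis yields $\boldsymbol{\beta}_1=\boldsymbol{\beta}_2$.

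The main obstacle is the sparse regime, where $\hat p_i=0$ for some $i$. There the multipliers $\nu_{k,i}$ are free nonnegative numbers, so utilities of zero-probability alternatives are not pinned down pointwise, and the Sparsemax map is genuinely many-to-one in $\mathbf{V}$. To handle this I would read the differentiability assumption as in (A2): $\Lambda$ is differentiable at $\hat{\mathbf{p}}$ as a function on the affine hull of $\Delta$, with $\nabla\Lambda(\hat{\mathbf{p}})$ characterizing the normal cone. Under that reading the KKT equality holds with $\boldsymbol{\nu}=0$, and the argument above goes through. If this is too strong, the fallback is to work on the set of $\mathbf{X}$ where the support of $\hat{\mathbf{p}}$ is all of $\mathcal{C}$, or at least contains enough alternatives. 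One would then require the identifiability condition to hold when restricted to differences on the supports, which seems to be what the statement implicitly assumes. I would state this caveat explicitly in the written proof.
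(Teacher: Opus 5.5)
Your core argument is the paper's: write the first-order conditions of \eqref{eq:pum_primal} at the common prediction, subtract so that $\nabla\Lambda(\hat{\mathbf{p}})$ cancels, obtain $\mathbf{V}_1-\mathbf{V}_2\in\operatorname{span}\{\mathbf{1}\}$, and invoke the identifiability hypothesis. The paper writes the KKT system with the scalar simplex multiplier only, $\mathbf{V}_k-\lambda_k\mathbf{1}=\nabla\Lambda(\mathbf{p})$. That tacitly assumes $\hat{\mathbf{p}}$ lies in the relative interior of $\Delta$. You are right that this is the crux, and your interior-case argument is complete.

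The step that does not work is your first rescue of the sparse case. Differentiability of $\Lambda$ at $\hat{\mathbf{p}}$, whether on $\operatorname{aff}\Delta$ or on all of $\mathbb{R}^K$, cannot force $\boldsymbol{\nu}_k=\mathbf{0}$, because the constraints $q_i\ge 0$ remain active at boundary points. Take Sparsemax with $\mu=1$, so $\Lambda(\mathbf{q})=\tfrac12\|\mathbf{q}\|_2^2$ is smooth everywhere. Yet $\mathbf{V}=(10,0,0)^\top$ and $\mathbf{V}'=(10,0,-5)^\top$ both give $\hat{\mathbf{p}}=(1,0,0)^\top$, and $\mathbf{V}'-\nabla\Lambda(\hat{\mathbf{p}})=(9,0,-5)^\top$ requires $\boldsymbol{\nu}'=(0,9,14)^\top\neq\mathbf{0}$.

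This also gives a counterexample to the lemma itself. Use the deterministic-$\mathbf{X}$ specification $\mathbf{V}(\boldsymbol{\beta})=(10\beta_1,0,\beta_2)^\top$, which satisfies the identifiability hypothesis because the middle coordinate forces the constant to be zero. Then $\boldsymbol{\beta}=(1,0)$ and $\boldsymbol{\beta}=(1,-5)$ yield identical predictions. So the lemma as literally stated is false for sparse kernels, the same example breaks the paper's KKT step, and no reading of ``differentiable'' closes the gap.

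Drop that reading and promote your fallback to an explicit added hypothesis. One option is that $\hat{\mathbf{p}}(\mathbf{X};\boldsymbol{\beta})$ has full support a.s., e.g.\ when $\nabla\Lambda$ diverges at the boundary of $\Delta$ as for MNL. The other is to require identifiability only for differences that are constant on $\operatorname{supp}\hat{\mathbf{p}}(\mathbf{X})$. Under either, your subtraction with $\nu_{k,i}=0$ on the support finishes the proof. The paper's proof is valid only under the first.
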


\begin{proof}
    See Appendix \ref{app:proofs}.
\end{proof}

Next, we present the consistency theorem for the FY estimators for PUMs.

\begin{theorem}[Consistency of Fenchel-Young Estimator for PUM]
    \label{thm:fy_consistency}
    Let $\{(\mathbf{X}_n, \mathbf{Y}_n)\}_{n=1}^N$ be i.i.d.\ random vectors from $\mathbb{P}^\star$ and consider a PUM with Fenchel-Young loss $\ell_{\mathrm{FY}}(\boldsymbol{\beta}; \mathbf{X}, \mathbf{y})$.
    Assume the model is well-specified, i.e., there exists a unique $\boldsymbol{\beta}^\star \in \mathcal{B}$ such that
    \[
        \mathbb{E}_{\mathbb{P}^\star}[\mathbf{Y} \mid \mathbf{X}] = \hat{\mathbf{p}}(\mathbf{X}; \boldsymbol{\beta}^\star) \quad \text{almost surely},
    \]
    and that the continuity and identifiability conditions in Lemmas~\ref{lem:fy_cont_dom} and \ref{lem:pum_identifiable} hold.

    Let
    \[
        \hat{\boldsymbol{\beta}}_N
        \in \arg\min_{\boldsymbol{\beta}\in\mathcal{B}}
        \frac{1}{N}\sum_{n=1}^N
        \ell_{\mathrm{FY}}(\boldsymbol{\beta}; \mathbf{X}_n, \mathbf{Y}_n)
    \]
    be any empirical Fenchel-Young minimizer.
    Then the population Fenchel-Young risk
    $R(\boldsymbol{\beta}) \coloneqq \mathbb{E}_{\mathbb{P}^\star} [\ell_{\mathrm{FY}}(\boldsymbol{\beta}; \mathbf{X}, \mathbf{Y})]$
    is uniquely minimized at $\boldsymbol{\beta}^\star$, and
    \[
        \hat{\boldsymbol{\beta}}_N
        \xrightarrow{p}
        \boldsymbol{\beta}^\star
        \quad\text{as } N\to\infty.
    \]
\end{theorem}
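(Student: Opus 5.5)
The proof follows the standard argmin-consistency route for extremum estimators (Newey--McFadden, Theorem 2.1). We need three ingredients: a compact parameter space, uniform convergence in probability of the empirical risk to the population risk, and a population risk that is continuous and uniquely minimized at $\boldsymbol{\beta}^\star$. Compactness of $\mathcal{B}$ is assumed.

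\emph{Uniform convergence.} Lemma~\ref{lem:fy_cont_dom} shows that $\boldsymbol{\beta}\mapsto\ell_{\mathrm{FY}}(\boldsymbol{\beta};\mathbf{X},\mathbf{y})$ is continuous on $\mathcal{B}$. It also supplies an integrable envelope $M$. Hence the uniform law of large numbers for i.i.d.\ data (Jennrich / Tauchen type, compact index set, continuous integrand, integrable envelope) gives
\[
\sup_{\boldsymbol{\beta}\in\mathcal{B}}\Bigl|\tfrac1N\textstyle\sum_{n}\ell_{\mathrm{FY}}(\boldsymbol{\beta};\mathbf{X}_n,\mathbf{Y}_n)-R(\boldsymbol{\beta})\Bigr|\xrightarrow{p}0.
\]
Continuity of $R$ then follows from dominated convergence.

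\emph{Identification.} We work conditionally on $\mathbf{X}$. Since $\ell_{\mathrm{FY}}$ is affine in $\mathbf{y}$,
\[
\mathbb{E}[\ell_{\mathrm{FY}}\mid\mathbf{X}]=\ell_{\mathrm{FY}}\bigl(\boldsymbol{\beta};\mathbf{X},\mathbf{p}^\star(\mathbf{X})\bigr),\qquad \mathbf{p}^\star(\mathbf{X})=\hat{\mathbf{p}}(\mathbf{X};\boldsymbol{\beta}^\star).
\]
We add the $\boldsymbol{\beta}$-free constant $\Lambda(\mathbf{p}^\star)$, which is finite because $\mathbf{p}^\star$ is a model probability and therefore lies in $\operatorname{dom}\Lambda$. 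The conditional risk then becomes the Fenchel--Young gap $\Omega(\mathbf{V})+\Lambda(\mathbf{p}^\star)-\mathbf{p}^{\star\top}\mathbf{V}\ge 0$, evaluated at $\mathbf{V}=\mathbf{X}\boldsymbol{\beta}$. By Proposition~\ref{prop:consistency}, this gap vanishes at $\boldsymbol{\beta}^\star$, so $\boldsymbol{\beta}^\star$ minimizes $R$. Integrating over $\mathbf{X}$ (finiteness from the envelope) shows that $R(\boldsymbol{\beta})-R(\boldsymbol{\beta}^\star)=\mathbb{E}[\text{gap}]\ge0$.

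For uniqueness, suppose $R(\boldsymbol{\beta})=R(\boldsymbol{\beta}^\star)$. Then the nonnegative gap is zero almost surely. Equality in the Fenchel--Young inequality means $\mathbf{p}^\star\in\partial\Omega(\mathbf{X}\boldsymbol{\beta})$. Since $\Omega$ is differentiable under (A1)--(A2) by the PUM Duality proposition, this says $\hat{\mathbf{p}}(\mathbf{X};\boldsymbol{\beta})=\hat{\mathbf{p}}(\mathbf{X};\boldsymbol{\beta}^\star)$ almost surely. Lemma~\ref{lem:pum_identifiable} then gives $\boldsymbol{\beta}=\boldsymbol{\beta}^\star$.

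\emph{Conclusion.} Fix $\varepsilon>0$. By compactness and continuity, $\eta\coloneqq\min\{R(\boldsymbol{\beta})-R(\boldsymbol{\beta}^\star):\|\boldsymbol{\beta}-\boldsymbol{\beta}^\star\|\ge\varepsilon,\ \boldsymbol{\beta}\in\mathcal{B}\}>0$. Write $\hat R_N$ for the empirical risk. On the event $\sup|\hat R_N-R|<\eta/2$,
\[
R(\hat{\boldsymbol{\beta}}_N)<\hat R_N(\hat{\boldsymbol{\beta}}_N)+\eta/2\le\hat R_N(\boldsymbol{\beta}^\star)+\eta/2<R(\boldsymbol{\beta}^\star)+\eta,
\]
which forces $\|\hat{\boldsymbol{\beta}}_N-\boldsymbol{\beta}^\star\|<\varepsilon$. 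The probability of this event tends to one.

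The main obstacle is the identification step. Equality in the Fenchel--Young gap only yields $\mathbf{p}^\star\in\partial\Omega(\mathbf{V})$, and converting this into equality of predicted probabilities requires differentiability of $\Omega$. That differentiability comes from strict convexity of $\Lambda$ (equivalently, uniqueness of the argmax in \eqref{eq:pum_primal}). We also need $\Lambda(\mathbf{p}^\star)<\infty$ so that the constant shift is legitimate. If $\Lambda(\mathbf{p}^\star)$ might be infinite, one can argue directly: $R(\boldsymbol{\beta})-R(\boldsymbol{\beta}^\star)=\mathbb{E}[\Omega(\mathbf{V})-\Omega(\mathbf{V}^\star)-\mathbf{p}^{\star\top}(\mathbf{V}-\mathbf{V}^\star)]$, which is a Bregman divergence of $\Omega$ with $\nabla\Omega(\mathbf{V}^\star)=\mathbf{p}^\star$. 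This expression avoids $\Lambda(\mathbf{p}^\star)$ entirely, and its vanishing again gives $\mathbf{p}^\star\in\partial\Omega(\mathbf{V})$.
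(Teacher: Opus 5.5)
Your proof is correct and follows the paper's argument essentially step for step. For unique minimization you use the pointwise Fenchel--Young inequality (adding the constant $\Lambda(\mathbf{p}^\star)$ just recasts the paper's lower bound $-\Lambda(\mathbf{p}^\star)$ as a nonnegative gap), then the equality case $\mathbf{p}^\star\in\partial\Omega(\mathbf{X}\boldsymbol{\beta})$, differentiability of $\Omega$, and Lemma~\ref{lem:pum_identifiable}; for consistency you use the ULLN from Lemma~\ref{lem:fy_cont_dom} and the standard well-separated-minimum argument. Your alternative that writes $R(\boldsymbol{\beta})-R(\boldsymbol{\beta}^\star)$ as the expected Bregman divergence of $\Omega$ at $\mathbf{V}^\star$ is a small but genuine tidying, since it avoids needing $\Lambda(\mathbf{p}^\star)<\infty$.
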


\begin{proof}
    See Appendix \ref{app:proofs}.
\end{proof}

Next, under slightly stronger regularity conditions, we analyze the convergence rate of the Fenchel-Young estimators.

\begin{theorem}[Asymptotic Normality of the Fenchel-Young Estimator]
    \label{thm:fy_asymptotics}
    Let $\hat{\boldsymbol{\beta}}_N$ be the estimator obtained by minimizing the empirical Fenchel--Young loss over a dataset of $N$ i.i.d.\ samples generated from a well-specified PUM with true parameter $\boldsymbol{\beta}^\star$. Assume that $\boldsymbol{\beta}^\star$ lies in the interior of a compact parameter space $\mathcal{B}$, and the following regularity conditions hold:
    \begin{enumerate}
        \item The surplus function $\Omega$ is twice continuously differentiable in a neighborhood of $\mathbf{V}(\mathbf{X}; \boldsymbol{\beta}^\star)$ almost surely.
        \item The spectral norm of the Jacobian $\nabla^2 \Omega(\mathbf{V})$ is bounded almost surely, which provides an integrable envelope for the sample Hessian.
        \item The expected Hessian matrix $\mathbf{H}(\boldsymbol{\beta}^\star) \coloneqq \mathbb{E}[\nabla^2 \ell_{\mathrm{FY}}(\boldsymbol{\beta}^\star; \mathbf{X}, \mathbf{Y})]$ is positive definite.
        \item $\mathbb{E}[\|\mathbf{X}\|^2] < \infty$ and the covariance matrix of the gradient $\mathbf{J}(\boldsymbol{\beta}^\star) \coloneqq \mathbb{E}[\nabla \ell_{\mathrm{FY}}(\boldsymbol{\beta}^\star; \mathbf{X}, \mathbf{Y}) \nabla \ell_{\mathrm{FY}}(\boldsymbol{\beta}^\star; \mathbf{X}, \mathbf{Y})^\top]$ is finite.
    \end{enumerate}

    Then, as $N \to \infty$,
    \[
        \sqrt{N} (\hat{\boldsymbol{\beta}}_N - \boldsymbol{\beta}^\star) \xrightarrow{\mathrm{d}} \mathcal{N}\left(\mathbf{0}, \mathbf{V}_{\mathrm{sand}}\right),
    \]
    where the asymptotic variance is given by the \textit{sandwich form}:
    \begin{equation}
        \mathbf{V}_{\mathrm{sand}} = \mathbf{H}(\boldsymbol{\beta}^\star)^{-1} \mathbf{J}(\boldsymbol{\beta}^\star) \mathbf{H}(\boldsymbol{\beta}^\star)^{-1}.
    \end{equation}
\end{theorem}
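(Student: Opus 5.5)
We prove this with the standard M-estimator argument: a first-order condition at $\hat{\boldsymbol{\beta}}_N$, a mean-value expansion around $\boldsymbol{\beta}^\star$, a central limit theorem for the score, and a uniform law of large numbers for the Hessian.

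The argument starts from Theorem~\ref{thm:fy_consistency}, which gives $\hat{\boldsymbol{\beta}}_N \xrightarrow{p} \boldsymbol{\beta}^\star$. Because $\boldsymbol{\beta}^\star$ is interior to $\mathcal{B}$, with probability tending to one $\hat{\boldsymbol{\beta}}_N$ lies in an open ball around $\boldsymbol{\beta}^\star$ where condition 1 holds. The empirical objective $\mathcal{J}_N$ is convex and differentiable, with gradient given by Eq.~\eqref{eq:general_gradient}. Hence on that event $\nabla \mathcal{J}_N(\hat{\boldsymbol{\beta}}_N)=\mathbf{0}$.

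Write the per-observation score as $\mathbf{s}(\boldsymbol{\beta};\mathbf{X},\mathbf{Y})=\mathbf{X}^\top(\nabla\Omega(\mathbf{X}\boldsymbol{\beta})-\mathbf{Y})$. Its Hessian is $\mathbf{X}^\top\nabla^2\Omega(\mathbf{X}\boldsymbol{\beta})\mathbf{X}$, which does not depend on $\mathbf{Y}$. A row-by-row mean-value (integral-form) expansion gives
\[
\mathbf{0}=\nabla\mathcal{J}_N(\boldsymbol{\beta}^\star)+\bar{\mathbf{H}}_N(\hat{\boldsymbol{\beta}}_N-\boldsymbol{\beta}^\star),
\qquad
\bar{\mathbf{H}}_N=\int_0^1\nabla^2\mathcal{J}_N\big(\boldsymbol{\beta}^\star+t(\hat{\boldsymbol{\beta}}_N-\boldsymbol{\beta}^\star)\big)\,dt .
\]

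For the score, well-specification gives $\mathbb{E}[\mathbf{Y}\mid\mathbf{X}]=\nabla\Omega(\mathbf{X}\boldsymbol{\beta}^\star)$, so $\mathbb{E}[\mathbf{s}(\boldsymbol{\beta}^\star)]=\mathbf{0}$ by iterated expectations. Since $\nabla\Omega$ and $\mathbf{Y}$ both lie in $\Delta$, the bracket is bounded, so $\|\mathbf{s}\|\le 2\|\mathbf{X}\|$. Combined with $\mathbb{E}\|\mathbf{X}\|^2<\infty$, this gives finite second moments. The multivariate Lindeberg--L\'evy CLT then yields
\[
\sqrt{N}\,\nabla\mathcal{J}_N(\boldsymbol{\beta}^\star)\xrightarrow{d}\mathcal{N}(\mathbf{0},\mathbf{J}(\boldsymbol{\beta}^\star)).
\]

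For the Hessian, condition 2 gives the envelope $\|\mathbf{X}^\top\nabla^2\Omega\,\mathbf{X}\|\le C\|\mathbf{X}\|^2$, which is integrable. Condition 1 gives continuity in $\boldsymbol{\beta}$ on a neighborhood of $\boldsymbol{\beta}^\star$. A uniform law of large numbers on a small compact ball (the same argument as Lemma~\ref{lem:fy_cont_dom}, applied to the Hessian) shows that $\nabla^2\mathcal{J}_N$ converges uniformly in probability to its expectation, which is continuous. Together with consistency, this gives $\bar{\mathbf{H}}_N\xrightarrow{p}\mathbf{H}(\boldsymbol{\beta}^\star)$. By condition 3, $\bar{\mathbf{H}}_N$ is invertible with probability tending to one. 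Therefore
\[
\sqrt{N}(\hat{\boldsymbol{\beta}}_N-\boldsymbol{\beta}^\star)=-\bar{\mathbf{H}}_N^{-1}\sqrt{N}\,\nabla\mathcal{J}_N(\boldsymbol{\beta}^\star),
\]
and Slutsky's lemma delivers the sandwich limit $\mathbf{H}^{-1}\mathbf{J}\mathbf{H}^{-1}$.

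The main obstacle is the Hessian step. Condition 1 only gives twice differentiability in a neighborhood of $\mathbf{V}(\mathbf{X};\boldsymbol{\beta}^\star)$ whose size may depend on $\mathbf{X}$; for Sparsemax-type kernels, for example, $\Omega$ is not $C^2$ where the support changes. The plan to handle this is as follows.

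1. Use the fact that $\nabla\Omega$ is Lipschitz, with constant given by the bound in condition 2, so the score is Lipschitz in $\boldsymbol{\beta}$ with integrable constant $C\|\mathbf{X}\|^2$.
2. For $\mathbb{P}^\star$-almost every $\mathbf{X}$, the path $\boldsymbol{\beta}^\star+t(\hat{\boldsymbol{\beta}}_N-\boldsymbol{\beta}^\star)$ eventually lies inside that observation's neighborhood.
3. Dominated convergence then gives $\mathbb{E}\big[\sup_{\|\boldsymbol{\beta}-\boldsymbol{\beta}^\star\|\le\delta}\|\nabla^2\ell-\nabla^2\ell(\boldsymbol{\beta}^\star)\|\big]\to 0$ as $\delta\to 0$. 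This is the stochastic-equicontinuity condition needed for $\bar{\mathbf{H}}_N\to\mathbf{H}$.

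If the Hessian is not defined everywhere along the path, the fallback is to replace the mean-value step with the Lipschitz score. One then verifies the differentiability-in-quadratic-mean conditions of the standard Z-estimator theorem (Huber/van der Vaart Thm.~5.21). That route needs only differentiability of $\boldsymbol{\beta}\mapsto\mathbb{E}[\mathbf{s}(\boldsymbol{\beta})]$ at $\boldsymbol{\beta}^\star$, with derivative $\mathbf{H}(\boldsymbol{\beta}^\star)$.
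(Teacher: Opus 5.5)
Your proposal is correct and follows essentially the same route as the paper's proof: the first-order condition at $\hat{\boldsymbol{\beta}}_N$ with probability approaching one, the integral-form expansion with the averaged Hessian $\bar{\mathbf{H}}_N$, the Lindeberg--L\'evy CLT for the score, convergence of $\bar{\mathbf{H}}_N$ via the $C\|\mathbf{X}\|^2$ envelope, and Slutsky's lemma. Your dominated-convergence equicontinuity step handles the $\mathbf{X}$-dependent neighborhoods in condition~1 more carefully than the paper's appeal to the ULLN plus the continuous mapping theorem, though your van der Vaart Thm.~5.21 fallback, with Lipschitz constant $C\|\mathbf{X}\|^2$, would additionally require $\mathbb{E}\|\mathbf{X}\|^4<\infty$, which is not assumed.
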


It is crucial to distinguish between two cases regarding the asymptotic efficiency:
\begin{itemize}
    \item \textbf{Multinomial Logit:} For MNL, the Fenchel--Young loss coincides with the negative log-likelihood. By the Bartlett identities, the expected Hessian equals the covariance of the score ($\mathbf{H} = \mathbf{J} = \mathcal{I}_{\text{Fisher}}$). The sandwich variance simplifies to $\mathcal{I}_{\text{Fisher}}^{-1}$, achieving the Cramér-Rao lower bound (asymptotic efficiency).
    
    \item \textbf{Sparsemax:} For PUMs with sparse choice kernels, the probability mapping is non-smooth at the boundaries of its support. However, because these truncation boundaries constitute a set of Lebesgue measure zero, the \textit{almost sure} regularity condition of Theorem \ref{thm:fy_asymptotics} is satisfied, thereby guaranteeing asymptotic normality. Furthermore, since the Fenchel--Young loss operates as a generalized M-estimator, the classical Bartlett identity generally fails ($\mathbf{H} \neq \mathbf{J}$). Specifically, the local curvature vanishes along dimensions where predicted probabilities are strictly truncated to zero, whereas the covariance matrix $\mathbf{J}$ continues to capture the residual variance. Consequently, the asymptotic variance retains the full sandwich form $\mathbf{V}_{\mathrm{sand}}$. This implies that while the estimator robustly handles zero-probability regimes, it inherently trades off maximal theoretical asymptotic efficiency.
\end{itemize}


\section{Learning Perturbation Geometry within a Basis Family} \label{sec:Joint}

Previously, we established the Fenchel-Young estimation framework for a fixed \textit{a priori} perturbation function $\Lambda$. By guaranteeing a globally convex loss for utility parameters $\boldsymbol{\beta}$ under \textit{any} valid perturbation, this estimator overcomes the analytical limitations of traditional likelihood methods, ensuring unique and efficient solutions across a broader class of decision mechanisms.

Leveraging this global convexity, we now address scenarios where the true decision geometry $\Lambda$ is unknown. Rather than relying on a discrete selection heuristic that merely picks the best fit from rigid candidates, we propose \textit{Parametric Basis Estimation (PBE)}. PBE defines the admissible function space $\mathcal{H}$ as the convex hull of a pre-specified dictionary of elementary perturbations, allowing us to learn a continuous convex combination of these bases to synthesize a data-driven hybrid mechanism while retaining the computational tractability of the fixed-structure framework.

Methodologically, we assume the composite perturbation $\Lambda$ possesses a tree structure (Definition \ref{defi:hierarchical}) to capture complex substitution patterns and nested dependencies. To operationalize this within PBE, our basis dictionary encompasses both micro-level choice penalties at leaf nodes and macro-level cognitive penalties at internal nodes. This structured architecture preserves conditional decoupling and successfully disentangles the intrinsic geometry of local choices from broader macroscopic category interactions.

\subsection{Formulation}  \label{subsec:PBE_formulation}

To operationalize the structural search within the tree-structured PUM framework, we restrict the infinite-dimensional perturbation space to a tractable subspace spanned by finite micro-level ($\mathcal{H}_{\mathrm{micro}} = \{h_1, \dots, h_M\}$) and macro-level ($\mathcal{H}_{\mathrm{macro}} = \{\phi_1, \dots, \phi_L\}$) dictionaries. We impose boundary normalization --- $h_m(0)=h_m(1)=0$ and $\phi_l(0)=\phi_l(1)=0$ --- to eliminate pure linear components. This theoretically prevents unidentifiable cross-level cost-shifting, where a macro penalty $c y_s$ trivially algebraically pushes down to descendant leaves as $\sum c p_i$. The composite perturbations are modeled as non-negative linear combinations:
\begin{equation}
    h_{\hat{\boldsymbol{\omega}}_h}(p_i) = \sum_{m=1}^M \hat{\omega}_{h, m} h_m(p_i), \quad \Phi_{s, \hat{\boldsymbol{\omega}}_{\phi, s}}(y_s) = \sum_{l=1}^L \hat{\omega}_{\phi, s, l} \phi_l(y_s),
    \label{eq:basis_combination}
\end{equation}
where $\hat{\boldsymbol{\omega}}_h \succeq 0$ and $\hat{\boldsymbol{\omega}}_{\phi, s} \succeq 0$ are learnable weight vectors for leaf nodes and internal nodes $s \in \mathcal{T}_{\mathrm{int}}$, respectively. Because convex functions are closed under non-negative addition, the aggregate tree-structured perturbation $\Lambda(\mathbf{p})$ inherently retains the strict convexity required for a valid PUM. 

We constrain the concatenated weight vector to a joint unit simplex ($\sum_{m=1}^M \hat{\omega}_{h, m} + \sum_{s \in \mathcal{T}_{\mathrm{int}}} \sum_{l=1}^L \hat{\omega}_{\phi, s, l} = 1$). This global normalization serves two vital purposes:
\begin{enumerate}
    \item \textbf{Gauge Fixing:} It breaks scale invariance. Constraining the perturbation's total geometric scale forces the utility weights $\boldsymbol{\beta}$ to absorb all signal-to-noise information, ensuring a non-singular Hessian.
    \item \textbf{Substitution Learning:} It dynamically allocates the ``cognitive friction'' budget across micro and macro levels. This allows the model to endogenously learn the relative importance of item-level tradeoffs versus category-level correlations, similar to estimating nest-specific coefficients in generalized nested logit models.
\end{enumerate}

Substituting this basis expansion into the estimation framework reveals that a direct joint minimization of the Fenchel-Young loss is ill-posed, as the divergence metric inherently scales with the perturbation geometry. To resolve this dilemma, we formulate PBE as a bi-level optimization problem. In this hierarchy, the inner level estimates the utility parameters $\boldsymbol{\beta}$ by minimizing the Fenchel-Young loss, exploiting its guaranteed global convexity for any fixed tree structure. The outer level tunes the joint mixing coefficients, which are concatenated as a single structural hyper-parameter vector $\boldsymbol{\omega} = [\hat{\boldsymbol{\omega}}_h^\top, \hat{\boldsymbol{\omega}}_{\phi, 1}^\top, \dots, \hat{\boldsymbol{\omega}}_{\phi, |\mathcal{T}_{\mathrm{int}}|}^\top]^\top$, by minimizing a geometry-invariant metric. This ensures a fair comparison of predictive power across different structural hypotheses. The complete formulation over the $N$ observations is given by:
\begin{subequations}
\begin{align}
\min_{\boldsymbol{\omega} \in \Delta^{\mathrm{joint}}} \quad & \mathcal{L}_{\mathrm{outer}}(\boldsymbol{\omega}) = \sum_{n=1}^N \| \mathbf{y}_n - \hat{\mathbf{p}}(\mathbf{X}_n; \hat{\boldsymbol{\beta}}(\boldsymbol{\omega}), \Lambda_{\boldsymbol{\omega}}) \|_2^2 \label{eq:pbe_outer} \\
\text{s.t.} \quad & \hat{\boldsymbol{\beta}}(\boldsymbol{\omega}) = \operatorname*{arg\,min}_{\boldsymbol{\beta} \in \mathbb{R}^d} \left( \sum_{n=1}^N \Gamma_{\mathrm{FY}}(\mathbf{y}_n, \mathbf{X}_n^\top \boldsymbol{\beta}; \Lambda_{\boldsymbol{\omega}}) + \theta \|\boldsymbol{\beta}\|_2^2 \right) \label{eq:pbe_inner}
\end{align}
\end{subequations}
where $\Delta^{\mathrm{joint}}$ represents the joint unit simplex constraining the global geometric scale, and $\Lambda_{\boldsymbol{\omega}}$ is the composite tree-structured perturbation function parameterized by the structural weights. The inner problem \eqref{eq:pbe_inner} recovers the optimal utility parameters $\hat{\boldsymbol{\beta}}$ conditioned on the learned structure $\boldsymbol{\omega}$, where the Fenchel-Young loss $\Gamma_{\mathrm{FY}}$ acts as the strictly convex objective ensuring a unique solution. The outer problem \eqref{eq:pbe_outer} evaluates the quality of this induced solution using the Brier score, which measures the Euclidean distance between the observed one-hot choices $\mathbf{y}_n$ and the optimal choice probabilities $\hat{\mathbf{p}}(\mathbf{V}) = \nabla \Omega_{\Lambda_{\boldsymbol{\omega}}}(\mathbf{V})$. 

While the bi-level framework successfully decouples the estimation targets, the structural search over the joint hyper-parameter $\boldsymbol{\omega}$ remains sensitive to the arbitrary scalar magnitudes of the primitive functions in both dictionaries $\mathcal{H}_{\mathrm{micro}}$ and $\mathcal{H}_{\mathrm{macro}}$. Without rigorous normalization, the mixing coefficients might be trivially biased towards basis functions possessing larger intrinsic ranges, effectively allowing scalar magnitude to overpower geometric curvature and unidentifiable cross-level cost-shifting in the optimization landscape.

To rectify this, we employ a multi-dimensional global area matching strategy to standardize the ``unit cost of uncertainty'' across all candidates, regardless of their hierarchical level. We impose a holistic structural constraint by defining the characteristic scale as the absolute area enclosed by a scalar perturbation primitive over the unit probability interval $[0, 1]$:
\begin{equation}
    Z(h_m) \triangleq -\int_{0}^{1} h_m(p) \, dp, \quad Z(\phi_l) \triangleq -\int_{0}^{1} \phi_l(y) \, dy.
    \label{eq:area_def}
\end{equation}
Given that any valid scalar primitive is strictly convex and forced to vanish at the boundaries ($h_m(0)=h_m(1)=0$ and $\phi_l(0)=\phi_l(1)=0$), it is strictly non-positive on the interior $(0, 1)$. The negative sign thus ensures the integral serves as a strictly positive, global measure of the expected uncertainty penalty.

We then introduce scaling factors $\alpha_m$ and $\gamma_l$ to construct the normalized effective basis functions $\bar{h}_m(p) = \alpha_m h_m(p)$ and $\bar{\phi}_l(y) = \gamma_l \phi_l(y)$, imposing the condition that all bases, both micro and macro, share a uniform global scale:
\begin{equation}
    \alpha_m Z(h_m) = C, \quad \gamma_l Z(\phi_l) = C, \quad \forall m \in \{1, \dots, M\}, \ \forall l \in \{1, \dots, L\},
    \label{eq:normalization_condition}
\end{equation}
where $C > 0$ is a universal global constant. By substituting these normalized bases into the composite tree-structured perturbation $\Lambda_{\boldsymbol{\omega}}$, we achieve a structural guarantee regarding scaling. Because the joint weight vector $\boldsymbol{\omega}$ resides on the joint unit simplex (summing to 1), the total geometric scale of the decision system's perturbation is strictly locked to the constant $C$. Consequently, we ensure that variations in the structural weights reflect purely geometric preferences regarding the shape of the decision boundary and the allocation of cognitive friction between item-level and category-level choices, completely free from artifacts of arbitrary scaling. To formalize this mathematical guarantee, we establish the following proposition, proving that our rigorous gauge-fixing constraints entail zero loss of structural expressivity within the space spanned by the chosen dictionaries.

\begin{proposition}[Gauge-Fixed Representability]
\label{prop:universal_representability}
Fix one reference child $r(s) \in \mathrm{Ch}(s)$ for each internal node $s \in \mathcal{T}_{\mathrm{int}}$, and set the corresponding macro-level weights to zero. Let $\mathcal{A}_\mathcal{T}$ be the set of active internal nodes, explicitly excluding the root node (since $y_{\mathrm{root}} \equiv 1$ on the probability simplex $\Delta_{\mathcal{L}}$ and $\phi_l(1) = 0$ yields a constant zero penalty):
\begin{equation}
    \mathcal{A}_\mathcal{T} := \mathcal{T}_{\mathrm{int}} \setminus \big(\{\mathrm{root}\} \cup \{r(s) : s \in \mathcal{T}_{\mathrm{int}}\}\big).
\end{equation}
Define the gauge-fixed composite dictionary
\begin{equation}
    \mathcal{F}_\mathcal{T} := \big\{ h_m(p_i) : i \in \mathcal{L}, \, m = 1, \dots, M \big\} \cup \big\{ \phi_l(y_u) : u \in \mathcal{A}_\mathcal{T}, \, l = 1, \dots, L \big\}.
\end{equation}
Assume that $\mathcal{F}_\mathcal{T}$ is linearly independent on the probability simplex $\Delta_{\mathcal{L}}$. Then every nonzero $\Lambda \in \mathrm{cone}(\mathcal{F}_\mathcal{T})$ admits a unique decomposition
\begin{equation}
    \Lambda(p) = c \Bigg[ \sum_{i \in \mathcal{L}} \sum_{m=1}^M \omega_{i,m}^{\mathrm{h}} h_m(p_i) + \sum_{u \in \mathcal{A}_\mathcal{T}} \sum_{l=1}^L \omega_{u,l}^\phi \phi_l(y_u) \Bigg]
\end{equation}
where $c > 0$ and the concatenated structural weight vector $\omega \in \Delta_{\mathrm{joint}}$. Equivalently, the absolute conic scale of $\Lambda$ is uniquely separated from its joint-simplex structural weights.
\end{proposition}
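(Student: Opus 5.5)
Elements of $\mathrm{cone}(\mathcal{F}_\mathcal{T})$ are nonnegative combinations of the dictionary functions, so existence of the decomposition should be a direct normalization. Uniqueness should then follow from linear independence, used twice: once to pin down the raw coefficient vector, and once to split that vector uniquely into a positive scale and a point of the joint simplex.

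\emph{Existence.} Take a nonzero $\Lambda \in \mathrm{cone}(\mathcal{F}_\mathcal{T})$. By definition of the cone it can be written as
\[
\Lambda = \sum_{i,m} a_{i,m} h_m(p_i) + \sum_{u,l} b_{u,l}\phi_l(y_u), \qquad a_{i,m}\ge 0,\ b_{u,l}\ge 0 .
\]
Since $\Lambda \neq 0$, not every coefficient is zero. Hence $c := \sum_{i,m} a_{i,m} + \sum_{u,l} b_{u,l} > 0$. Setting $\omega^{\mathrm h}_{i,m} = a_{i,m}/c$ and $\omega^\phi_{u,l} = b_{u,l}/c$ gives nonnegative weights summing to one, so $\omega \in \Delta_{\mathrm{joint}}$ and the displayed representation holds.

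\emph{Uniqueness.} Suppose $\Lambda = c\,\Sigma(\omega) = c'\,\Sigma(\omega')$, where $\Sigma(\cdot)$ denotes the bracketed combination and $\omega, \omega' \in \Delta_{\mathrm{joint}}$.
\begin{enumerate}
\item Subtracting the two representations gives
\[
\sum_{i,m}(c\,\omega^{\mathrm h}_{i,m} - c'\,\omega'^{\mathrm h}_{i,m})\,h_m(p_i) + \sum_{u,l}(c\,\omega^\phi_{u,l} - c'\,\omega'^\phi_{u,l})\,\phi_l(y_u) = 0
\]
identically on $\Delta_{\mathcal{L}}$. Because $\mathcal{F}_\mathcal{T}$ is linearly independent on $\Delta_{\mathcal{L}}$, every coefficient vanishes, so $c\,\omega = c'\,\omega'$ componentwise.
\item Summing these equalities over all components and using $\mathbf{1}^\top\omega = \mathbf{1}^\top\omega' = 1$ gives $c = c'$.
\item Since $c > 0$, dividing $c\,\omega = c'\,\omega'$ by $c$ yields $\omega = \omega'$.
\end{enumerate}

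\emph{Role of the hypotheses.} The gauge-fixing choices (dropping the root and the reference children $r(s)$) are what make the linear-independence assumption plausible, and the proof itself only invokes that assumption. The root is dropped because $y_{\mathrm{root}} \equiv 1$, so $\phi_l(y_{\mathrm{root}}) = \phi_l(1) = 0$ would put the zero function in the dictionary and break independence. The main obstacle is therefore not in the argument but in the assumption. It must be stated for the indexed family with distinct indices, so that coefficients are well defined; if any two dictionary elements coincide as functions, uniqueness can fail. I would add a remark that the proposition is conditional on this assumption and does not establish independence for specific dictionaries.
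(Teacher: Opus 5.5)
Your proposal is correct and follows the paper's proof essentially step for step: you obtain existence by normalizing the nonnegative cone coefficients by their sum $c>0$, and uniqueness by using linear independence of $\mathcal{F}_\mathcal{T}$ to get $c\,\omega = c'\,\omega'$ componentwise, summing to get $c=c'$, and dividing by $c>0$. One small wording correction: your opening sentence says linear independence is used twice, but it is invoked only once; the split into a scale and a point of the joint simplex comes from the normalization $\mathbf{1}^\top\omega = 1$, which is exactly what your step 2 uses.
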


\begin{proof}
    See Appendix \ref{app:proofs}.
\end{proof}

\subsection{Solution Approach}  \label{subsec:PBE_solution}

We begin our analysis of the solution method by establishing the fundamental mathematical properties of the PBE formulation. Crucially, to guarantee the algorithmic stability and convergence of this bi-level framework, we must first establish that the upper-level objective function remains continuous and differentiable with respect to the optimal solution mapping derived from the lower-level problem.

\begin{proposition}[Continuity and Almost-Everywhere Smoothness of the PBE Objective]
    \label{prop:continuity}
    Let $\mathcal{H}_{\mathrm{micro}}=\{h_m\}_{m=1}^M$ and
    $\mathcal{H}_{\mathrm{macro}}=\{\phi_l\}_{l=1}^L$ consist of continuous strictly convex
    primitive functions, each twice continuously differentiable except on finitely many points.
    Assume that the induced choice map has finitely many active regimes and that the regime
    boundaries in $\Delta^{\mathrm{joint}}$ have Lebesgue measure zero. For $\lambda>0$, the
    inner PBE problem has a unique solution
    $\hat{\boldsymbol{\beta}}(\boldsymbol{\omega})$ for every
    $\boldsymbol{\omega}\in\Delta^{\mathrm{joint}}$. Moreover,
    $\boldsymbol{\omega}\mapsto\hat{\boldsymbol{\beta}}(\boldsymbol{\omega})$ is continuous and
    differentiable almost everywhere. Consequently,
    $\mathcal{L}_{\mathrm{outer}}(\boldsymbol{\omega})$ is continuous and differentiable almost
    everywhere on $\Delta^{\mathrm{joint}}$.
\end{proposition}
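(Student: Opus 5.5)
The plan is to treat the inner problem as a strongly convex parametric program and argue in three stages: existence and uniqueness, then continuity via Berge's maximum theorem (or a direct strong-convexity stability estimate), then almost-everywhere differentiability via the implicit function theorem applied regime by regime.

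\textbf{Existence and uniqueness.} Fix $\boldsymbol{\omega}\in\Delta^{\mathrm{joint}}$. By Proposition~\ref{prop:convexity}, each term $\Gamma_{\mathrm{FY}}(\mathbf{y}_n,\mathbf{X}_n^\top\boldsymbol{\beta};\Lambda_{\boldsymbol{\omega}})$ is convex in $\boldsymbol{\beta}$, because $\Omega_{\Lambda_{\boldsymbol{\omega}}}$ is a conjugate and the utilities are linear. The ridge term $\theta\|\boldsymbol{\beta}\|_2^2$, with $\theta>0$ (the $\lambda>0$ of the statement), makes the inner objective $2\theta$-strongly convex. It is also coercive, since the FY loss is nonnegative up to the constant $\Lambda_{\boldsymbol{\omega}}(\mathbf{y}_n)$. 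Hence a minimizer exists and is unique. I would also note that $\Omega_{\Lambda_{\boldsymbol{\omega}}}$ is finite and continuous on $\mathbb{R}^K$, since $\Lambda_{\boldsymbol{\omega}}$ is continuous on the compact set $\Delta$.

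\textbf{Continuity.} I would show that the inner objective $F(\boldsymbol{\beta},\boldsymbol{\omega})$ is jointly continuous. The map $\boldsymbol{\omega}\mapsto\Lambda_{\boldsymbol{\omega}}$ is linear in $\boldsymbol{\omega}$, with continuous primitives on $[0,1]$, so $\Lambda_{\boldsymbol{\omega}}\to\Lambda_{\boldsymbol{\omega}'}$ uniformly on $\Delta$. The supremum defining $\Omega$ is $1$-Lipschitz in the sup-norm of $\Lambda$ and Lipschitz in $\mathbf{V}$, so $\Omega_{\Lambda_{\boldsymbol{\omega}}}(\mathbf{V})$ is jointly continuous.

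Strong convexity then gives the stability estimate
\[
\theta\|\hat{\boldsymbol{\beta}}(\boldsymbol{\omega})-\hat{\boldsymbol{\beta}}(\boldsymbol{\omega}')\|^2\le F(\hat{\boldsymbol{\beta}}(\boldsymbol{\omega}'),\boldsymbol{\omega})-F(\hat{\boldsymbol{\beta}}(\boldsymbol{\omega}),\boldsymbol{\omega}).
\]
The right-hand side is bounded by $2\sup_{\boldsymbol{\beta}\in B}|F(\boldsymbol{\beta},\boldsymbol{\omega})-F(\boldsymbol{\beta},\boldsymbol{\omega}')|$ on a fixed ball $B$. Such a ball exists because coercivity is uniform over the compact set $\Delta^{\mathrm{joint}}$, so it contains all minimizers. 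This yields continuity of $\hat{\boldsymbol{\beta}}$.

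For continuity of the outer objective I would use that $\hat{\mathbf{p}}=\nabla\Omega_{\Lambda_{\boldsymbol{\omega}}}(\mathbf{V})$ is the unique maximizer of a strictly concave program. Berge's maximum theorem then makes $\hat{\mathbf{p}}$ jointly continuous in $(\mathbf{V},\boldsymbol{\omega})$. Composing, $\mathcal{L}_{\mathrm{outer}}$ is continuous.

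\textbf{Almost-everywhere differentiability.} This is the main obstacle. On the interior of each active regime, the support of $\hat{\mathbf{p}}$ and the smooth pieces of the primitives are fixed. There the KKT system for $\hat{\mathbf{p}}$ reads
\[
\nabla\Lambda_{\boldsymbol{\omega}}(\mathbf{p})=\mathbf{V}-\lambda\mathbf{1}\ \text{on the support}, \qquad \mathbf{1}^\top\mathbf{p}=1,
\]
with the tree chain rule supplying the $\mu_s$ terms. This system is $C^1$ in $(\mathbf{p},\lambda,\mathbf{V},\boldsymbol{\omega})$. Its Jacobian is nonsingular because the restricted Hessian of $\Lambda_{\boldsymbol{\omega}}$ is positive definite on the tangent space of the face, by strict convexity.

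I expect to need to strengthen strict convexity to a positive second derivative off the finitely many kink points. If a primitive's second derivative can vanish at isolated points, I would add these points to the measure-zero exceptional set.

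Given this, $\Omega$ is $C^2$ in $\mathbf{V}$ and $C^1$ in $\boldsymbol{\omega}$ within the regime. The inner first-order condition
\[
\sum_n \mathbf{X}_n\bigl(\hat{\mathbf{p}}_n-\mathbf{y}_n\bigr)+2\theta\boldsymbol{\beta}=0
\]
therefore has Jacobian in $\boldsymbol{\beta}$ equal to $\sum_n\mathbf{X}_n\nabla^2\Omega\,\mathbf{X}_n^\top+2\theta\mathbf{I}\succ0$. The implicit function theorem then gives that $\hat{\boldsymbol{\beta}}$ is $C^1$ on each regime interior.

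The union of the regime boundaries, together with the $\boldsymbol{\omega}$ at which some $\mathbf{V}_n(\hat{\boldsymbol{\beta}}(\boldsymbol{\omega}))$ hits a kink, is assumed to have measure zero. So $\hat{\boldsymbol{\beta}}$, and by the chain rule $\mathcal{L}_{\mathrm{outer}}$, is differentiable almost everywhere on $\Delta^{\mathrm{joint}}$.
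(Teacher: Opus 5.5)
Your proposal is correct and follows essentially the same route as the paper. Strong convexity from the ridge term gives a unique inner minimizer, and joint continuity of the inner objective together with strong convexity gives continuity of $\boldsymbol{\omega}\mapsto\hat{\boldsymbol{\beta}}(\boldsymbol{\omega})$. The implicit function theorem on each active regime (inner Hessian $\succ\mathbf{0}$ because of the ridge term), combined with the measure-zero regime boundaries, then yields almost-everywhere differentiability of $\hat{\boldsymbol{\beta}}$ and of $\mathcal{L}_{\mathrm{outer}}$. You also fill in steps the paper only asserts: the explicit stability bound from the uniform perturbation of $\Lambda_{\boldsymbol{\omega}}$, Berge's theorem for continuity of $\hat{\mathbf{p}}$ in $\boldsymbol{\omega}$, and the observation that strict convexity must be strengthened to positive curvature for the KKT Jacobian to be nonsingular.
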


\begin{proof}
    See Appendix \ref{app:proofs}.
\end{proof}

To solve the bi-level optimization problem defined in Eqs.~\eqref{eq:pbe_outer}--\eqref{eq:pbe_inner}, we employ a Projected Gradient Descent (PGD) strategy on the joint structural parameters $\boldsymbol{\omega}$. This requires computing the total derivative of the outer objective $\mathcal{L}_{\mathrm{outer}}$ with respect to $\boldsymbol{\omega}$. Applying the chain rule, the gradient decomposes into a direct structural term and an indirect term mediated by the optimal utility parameters $\hat{\boldsymbol{\beta}}(\boldsymbol{\omega})$:
\begin{equation}
    \nabla_{\boldsymbol{\omega}} \mathcal{L}_{\mathrm{outer}} = \frac{\partial \mathcal{L}_{\mathrm{outer}}}{\partial \boldsymbol{\omega}} + \left( \frac{d \hat{\boldsymbol{\beta}}}{d \boldsymbol{\omega}} \right)^{\top} \nabla_{\hat{\boldsymbol{\beta}}} \mathcal{L}_{\mathrm{outer}}.
    \label{eq:chain_rule}
\end{equation}
The first term, $\frac{\partial \mathcal{L}_{\mathrm{outer}}}{\partial \boldsymbol{\omega}}$, captures how the tree-structured perturbation structure directly affects the prediction error (e.g., via the choice probability map $\mathbf{p}^*$), holding utilities fixed. The second term captures the sensitivity of the optimal utility estimates to changes in the structure, involving the Jacobian matrix $\mathcal{J}_{\hat{\boldsymbol{\beta}}} = \frac{d \hat{\boldsymbol{\beta}}}{d \boldsymbol{\omega}}$. Since $\hat{\boldsymbol{\beta}}(\boldsymbol{\omega})$ is defined implicitly as the solution to an optimization problem rather than a closed-form expression, computing this Jacobian directly is non-trivial.

We overcome this by leveraging the Implicit Function Theorem. Since the inner problem \eqref{eq:pbe_inner} is unconstrained and strictly convex (due to $\lambda > 0$), the optimal solution $\hat{\boldsymbol{\beta}}$ must satisfy the first-order stationarity condition $\nabla_{\boldsymbol{\beta}} \mathcal{J}_{\mathrm{inner}}(\hat{\boldsymbol{\beta}}, \boldsymbol{\omega}) = \mathbf{0}$. Differentiating this optimality condition with respect to $\boldsymbol{\omega}$ yields the linear system:
\begin{equation}
    \nabla_{\boldsymbol{\beta}\boldsymbol{\beta}}^2 \mathcal{J}_{\mathrm{inner}} \cdot \frac{d \hat{\boldsymbol{\beta}}}{d \boldsymbol{\omega}} + \nabla_{\boldsymbol{\beta}\boldsymbol{\omega}}^2 \mathcal{J}_{\mathrm{inner}} = \mathbf{0}.
\end{equation}
Let $\mathbf{H}_{\mathrm{inner}} = \nabla_{\boldsymbol{\beta}\boldsymbol{\beta}}^2 \mathcal{J}_{\mathrm{inner}}$ denote the Hessian of the inner loss with respect to utility parameters, and $\mathbf{B}_{\mathrm{inner}} = \nabla_{\boldsymbol{\beta}\boldsymbol{\omega}}^2 \mathcal{J}_{\mathrm{inner}}$ denote the mixed partial derivative matrix. Since $\mathbf{H}_{\mathrm{inner}}$ is invertible (guaranteed by strict convexity), we can solve for the Jacobian $\frac{d \hat{\boldsymbol{\beta}}}{d \boldsymbol{\omega}} = -\mathbf{H}_{\mathrm{inner}}^{-1} \mathbf{B}_{\mathrm{inner}}$. 

Substituting this result back into Eq.~\eqref{eq:chain_rule}, we obtain the analytical expression for the hypergradient:
\begin{equation}
    \nabla_{\boldsymbol{\omega}} \mathcal{L}_{\mathrm{outer}} = \frac{\partial \mathcal{L}_{\mathrm{outer}}}{\partial \boldsymbol{\omega}} - \mathbf{B}_{\mathrm{inner}}^{\top} \mathbf{H}_{\mathrm{inner}}^{-1} \nabla_{\hat{\boldsymbol{\beta}}} \mathcal{L}_{\mathrm{outer}}.
    \label{eq:hypergradient_final}
\end{equation}

To operationalize this computation without explicitly instantiating or inverting dense covariance matrices, we develop an efficient tree-based recursive algorithm. The detailed analytical forms for each component in Eq.~\eqref{eq:hypergradient_final} and the $\mathcal{O}(|\mathcal{C}| \times |\mathcal{T}_{\mathrm{int}}|)$ dynamic programming derivation are elaborated in Appendix \ref{app:hypergradient_derivation}.

\begin{algorithm}[ht!]
\small
\SetAlgoLined
\DontPrintSemicolon
\caption{Bi-Level PGD for Tree-Structured PBE}
\label{alg:bilevel_pbe}

\KwIn{Dataset $\mathcal{D} = \{(\mathbf{X}_n, \mathbf{y}_n)\}_{n=1}^N$, Dicts. $\mathcal{H}_{\mathrm{micro}}, \mathcal{H}_{\mathrm{macro}}$, Tree $\mathcal{T}$, Reg. $\theta > 0$, Step $\eta_{\boldsymbol{\omega}}$, Tol. $\epsilon$}
\KwOut{Optimal utilities $\hat{\boldsymbol{\beta}}$ and structural weights $\hat{\boldsymbol{\omega}}$}

\tcc{1. Initialization}
Initialize $\boldsymbol{\omega}^{(0)} \sim \text{Uniform}(\Delta^{\mathrm{joint}})$, $\boldsymbol{\beta}^{(0)} \leftarrow \mathbf{0}$, $t \leftarrow 0$\;

\While{$\|\boldsymbol{\omega}^{(t)} - \boldsymbol{\omega}^{(t-1)}\| > \epsilon$}{
    \tcc{2. Inner Utility Estimation}
    Solve convex inner problem (e.g., via PEG) under fixed $\Lambda_{\boldsymbol{\omega}^{(t)}}$:\;
    $$ \hat{\boldsymbol{\beta}}^{(t)} \leftarrow \operatorname*{arg\,min}_{\boldsymbol{\beta}} \left( \sum_{n=1}^N \Gamma_{\mathrm{FY}}(\mathbf{y}_n, \mathbf{X}_n^\top \boldsymbol{\beta}; \Lambda_{\boldsymbol{\omega}^{(t)}}) + \theta \|\boldsymbol{\beta}\|_2^2 \right) $$
    
    \tcc{3. Implicit Differentiation via Schur VJPs}
    Compute probabilities $\hat{\mathbf{p}}_n \leftarrow \nabla \Omega_{\Lambda_{\boldsymbol{\omega}^{(t)}}}(\mathbf{X}_n^\top \hat{\boldsymbol{\beta}}^{(t)})$ for all $n \in \{1..N\}$\;
    $\mathbf{g}_{\mathrm{outer}} \leftarrow 2 \sum_{n=1}^N \mathbf{X}_n \mathbf{H}_{\Omega, n} (\hat{\mathbf{p}}_n - \mathbf{y}_n)$\;
    Solve for $\mathbf{z}$ via Conjugate Gradient (applying $\mathbf{H}_{\Omega, n}$ VJPs via tree $\mathcal{T}$):\;
    \Indp $\left[ \sum_{n=1}^N \left( \mathbf{X}_n \mathbf{H}_{\Omega, n} \mathbf{X}_n^{\top} \right) + 2\theta \mathbf{I} \right] \mathbf{z} = \mathbf{g}_{\mathrm{outer}}$\; \Indm
    
    \tcc{4. Hypergradient Assembly \& Projection}
    $\mathbf{g}_{\mathrm{direct}} \leftarrow -2 \sum_{n=1}^N (\hat{\mathbf{p}}_n - \mathbf{y}_n)^{\top} \mathbf{H}_{\Omega, n} \mathbf{g}_{\boldsymbol{\omega}}(\hat{\mathbf{p}}_n)$\;
    $\nabla_{\boldsymbol{\omega}} \mathcal{L} \leftarrow \mathbf{g}_{\mathrm{direct}} - \mathbf{B}_{\mathrm{inner}}^{\top} \mathbf{z}$\;
    $\boldsymbol{\omega}^{(t+1)} \leftarrow \operatorname{Proj}_{\Delta^{\mathrm{joint}}}(\boldsymbol{\omega}^{(t)} - \eta_{\boldsymbol{\omega}} \nabla_{\boldsymbol{\omega}} \mathcal{L})$\;
    $t \leftarrow t + 1$\;
}
\Return{$\hat{\boldsymbol{\beta}}^{(t)}, \boldsymbol{\omega}^{(t)}$}
\end{algorithm}

Having established the theoretical underpinnings of the extended bi-level framework, specifically the multi-dimensional gauge fixing to eliminate cross-level scale invariance, and the derivation of hypergradients via implicit differentiation, we now present the complete computational procedure. Algorithm~\ref{alg:bilevel_pbe} details the PGD strategy for solving the tree-structured PBE problem. This iterative scheme efficiently navigates the bi-level optimization landscape by alternating between solving the strictly convex inner problem for utility parameters and performing gradient-based updates on the joint structural manifold. Crucially, by integrating the tree-based recursive Schur complements for the implicit vector-Jacobian products, the algorithm completely circumvents the $\mathcal{O}(|\mathcal{C}|^3)$ matrix inversion bottleneck. This ensures a computationally tractable search for the optimal heterogeneous decision geometry, even for exceptionally deep or broad taxonomies, while rigorously maintaining the statistical properties of the utility estimates.

Before executing this optimization framework, a fundamental prerequisite must be addressed: curating the functional dictionaries $\mathcal{H}_{\mathrm{micro}}$ and $\mathcal{H}_{\mathrm{macro}}$. The structural expressivity of the PBE model depends entirely on the geometrical diversity of these pre-specified convex basis functions. Rather than relying on ad-hoc classical primitives, we introduce a systematic methodology to construct these spaces. Specifically, Appendix \ref{app:Choice_Basis} details an optimization-based strategy to generate this finite dictionary. By formulating a maximin problem over B-spline parameterizations, this procedure optimally selects a concise set of primitives that maximizes structural dissimilarity, ensuring the framework captures a broad spectrum of nested behavioral tradeoffs.


\section{Numerical Experiments} \label{sec:Numerical}

In this section, we present a comprehensive series of numerical experiments designed to empirically validate the theoretical framework established in the preceding chapters. The primary objective is to demonstrate the practical efficacy and computational behavior of the proposed estimation methods through specific computational examples. The section is organized into three parts. Section 7.1 evaluates the convergence behavior of the proposed algorithms. Section 7.2 uses synthetic datasets to examine whether PBE can recover utility parameters and perturbation structures under controlled data-generating processes. Section 7.3 applies the proposed estimators to a real-world discrete choice dataset to assess their empirical predictive performance relative to standard discrete choice benchmarks.

\subsection{Testing the Solution Algorithm for the Fenchel-Young Estimator}

In this subsection, we evaluate the computational performance and convergence properties of the proposed solution algorithms for PUM estimation. Specifically, we test the optimization algorithms designed for the standard Fenchel-Young estimator, which employs the Projected Extragradient method. To assess the algorithms beyond the standard additive separable cases, we construct a non-separable quadratic regularizer. This is operationalized by defining a strictly positive definite matrix $\mathbf{Q} \in \mathbb{R}^{m \times m}$, generated as $\mathbf{Q} = \mathbf{K}^\top \mathbf{K} + \mathbf{I}_m$, where $\mathbf{K}$ is a random matrix with entries drawn from a standard normal distribution, and $I_m$ is the identity matrix. The corresponding perturbation function takes the form $\Lambda(\mathbf{q}) = \frac{\mu}{2} \mathbf{q}^\top \mathbf{Q} \mathbf{q}$, introducing complex cross-alternative correlations that challenge the optimization process.

\begin{figure}[ht!]
    \centering
    \includegraphics[width=\textwidth]{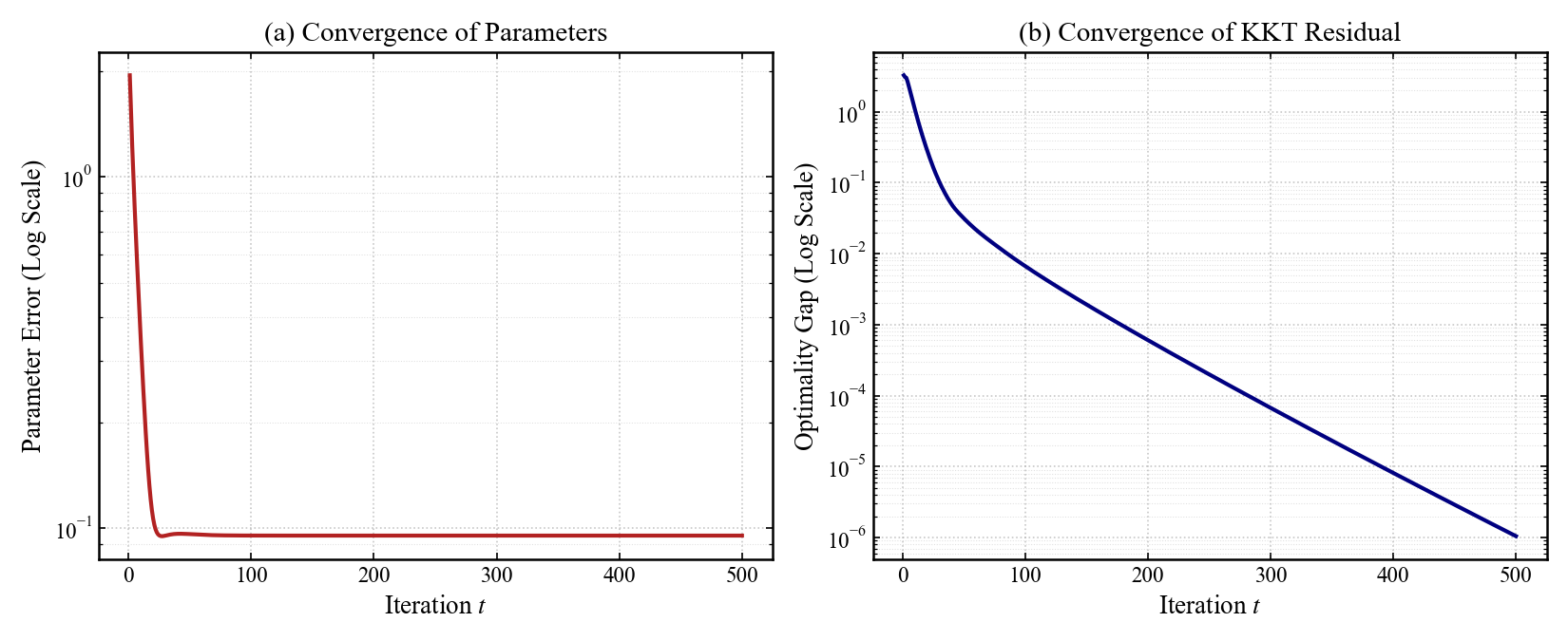}
    \caption{Convergence performance of the Projected Extragradient algorithm for the standard Fenchel-Young estimator under a non-separable quadratic perturbation. Panel (a) illustrates the rapid stabilization of the parameter error in Euclidean distance. Panel (b) demonstrates the monotonic decrease of the KKT residual on a logarithmic scale. The algorithm achieves linear convergence in this scenario.}
    \label{fig:fy_convergence}
\end{figure}

Figure \ref{fig:fy_convergence} illustrates the convergence behavior of the Fenchel-Young estimator using the Projected Extragradient algorithm. The numerical experiment is conducted on a large-scale problem featuring a 10-dimensional parameter space, 10 discrete alternatives, and 5,000 observations. Remarkably, the algorithm completes 500 iterations in 0.77 seconds, demonstrating the exceptional computational efficiency and stability of the PEG method for such problem scales. To quantitatively assess the optimality, the KKT residual is computed as the norm of the gradient mapping, i.e., the Euclidean distance between the current primal-dual variables and their projected states following a gradient step, normalized by the respective step sizes. Panel (b) confirms the strict monotonic decrease of this KKT residual, validating the theoretical convergence of the saddle-point formulation. However, as observed in Panel (a), the parameter error relative to the true data-generating parameters is not strictly monotonically decreasing. This phenomenon occurs because the empirical risk minimizer, derived from a finite sample of 5,000 observations, inherently deviates from the true population parameters due to sampling noise. Consequently, the optimization trajectory may temporarily traverse regions in the parameter space that are closer to the true values before ultimately settling at the empirical optimum.

\subsection{Testing the PBE with Synthetic Datasets}

In this subsection, we evaluate the empirical performance of the PBE framework using synthetic datasets. To demonstrate the framework's versatility, this subsection will test the PBE methodology under both additive separable and multi-level tree-structured PUM specifications. To assess the algorithm's capability to discover latent decision mechanisms, we simulate controlled choice environments with randomly generated dimensions (alternatives and features), true utility parameters $\beta^*$, and finite-sample observations. Crucially, the underlying true composite perturbation structure is generated to encompass both micro-level item penalties $h^*(p)$ and macro-level category penalties $\Phi_s^*(y_s)$. 

The accuracy of this joint estimation is quantified using two scale-invariant discrepancy metrics. First, to measure the precision of the estimated economic preferences $\hat{\beta}$, we utilize the relative Euclidean distance:
$$ \text{Relative } \beta \text{ Gap} = \frac{||\hat{\beta} - \beta^*||_2}{||\beta^*||_2} $$

Second, to evaluate the fidelity of the recovered structural mechanism, we extend the relative functional gap to the tree-structured setting. By integrating the absolute pointwise difference between the estimated and true continuous penalty functions across all structural levels over their respective probability domains $[0, 1]$, we obtain a global geometric divergence measure:
$$ \text{Relative Function Gap} = \frac{\int_{0}^{1} |\hat{h}(p) - h^*(p)| dp + \sum_{s \in \mathcal{T}_{\mathrm{int}}} \int_{0}^{1} |\hat{\Phi}_s(y) - \Phi^*_s(y)| dy}{\int_{0}^{1} |h^*(p)| dp + \sum_{s \in \mathcal{T}_{\mathrm{int}}} \int_{0}^{1} |\Phi^*_s(y)| dy} $$

Together, these relative metrics provide a comprehensive assessment of the PBE framework's ability to consistently disentangle and identify both the systematic utilities and the heterogeneous cognitive penalty structures across the entire decision hierarchy.

\subsubsection{Additive Separable PUMs}

Equipped with these scale-invariant metrics, we first investigate the impact of sample size ($N$) on the estimation accuracy of the PBE framework under additive separable PUMs. To ensure a robust evaluation, the experiment corresponding to Table \ref{tab:pbe_results} is conducted over 100 randomly generated problem scenarios that are consistent across all variations of $N$. In this experimental setup, the predefined dictionary consists of three candidate basis perturbation functions ($M = 3$), and the true composite perturbation function for each scenario is explicitly generated as a random convex combination of these bases. Table \ref{tab:pbe_results} summarizes the average performance of the joint estimator across these 100 instances, illustrating how the recovery quality evolves as the sample size increases.

The empirical results reveal a clear and consistent trend: as the sample size increases, both the relative $\beta$ gap and the relative function gap decrease significantly. For instance, even at a relatively limited sample size of $N = 150$, the algorithm demonstrates a robust capacity to recover the underlying decision mechanisms, yielding a relative function gap of $0.0767$ and a relative $\beta$ gap of $0.3647$. As $N$ progressively doubles, we observe a steady decline in the estimation errors. By $N = 2,400$, the relative $\beta$ gap drops substantially to $0.1244$, and the relative function gap is further refined to $0.0475$, indicating a highly precise recovery of both the economic preferences and the cognitive perturbation structures.

\begin{table}[ht!]
\centering
\caption{Estimation Accuracy of the PBE Framework across Different Sample Sizes}
\label{tab:pbe_results}
\begin{tabular}{ccc}
\toprule
\textbf{Sample Size ($N$)} & \textbf{Relative $\beta$ Gap} & \textbf{Relative Function Gap} \\
\midrule
150  & 0.3647 & 0.0767 \\
300  & 0.2491 & 0.0583 \\
600  & 0.2015 & 0.0583 \\
1,200 & 0.1506 & 0.0498 \\
2,400 & 0.1244 & 0.0475 \\
\bottomrule
\end{tabular}
\end{table}

\begin{figure}[ht!]
    \centering
    \begin{subfigure}{0.48\textwidth}
        \centering
        \includegraphics[width=\linewidth]{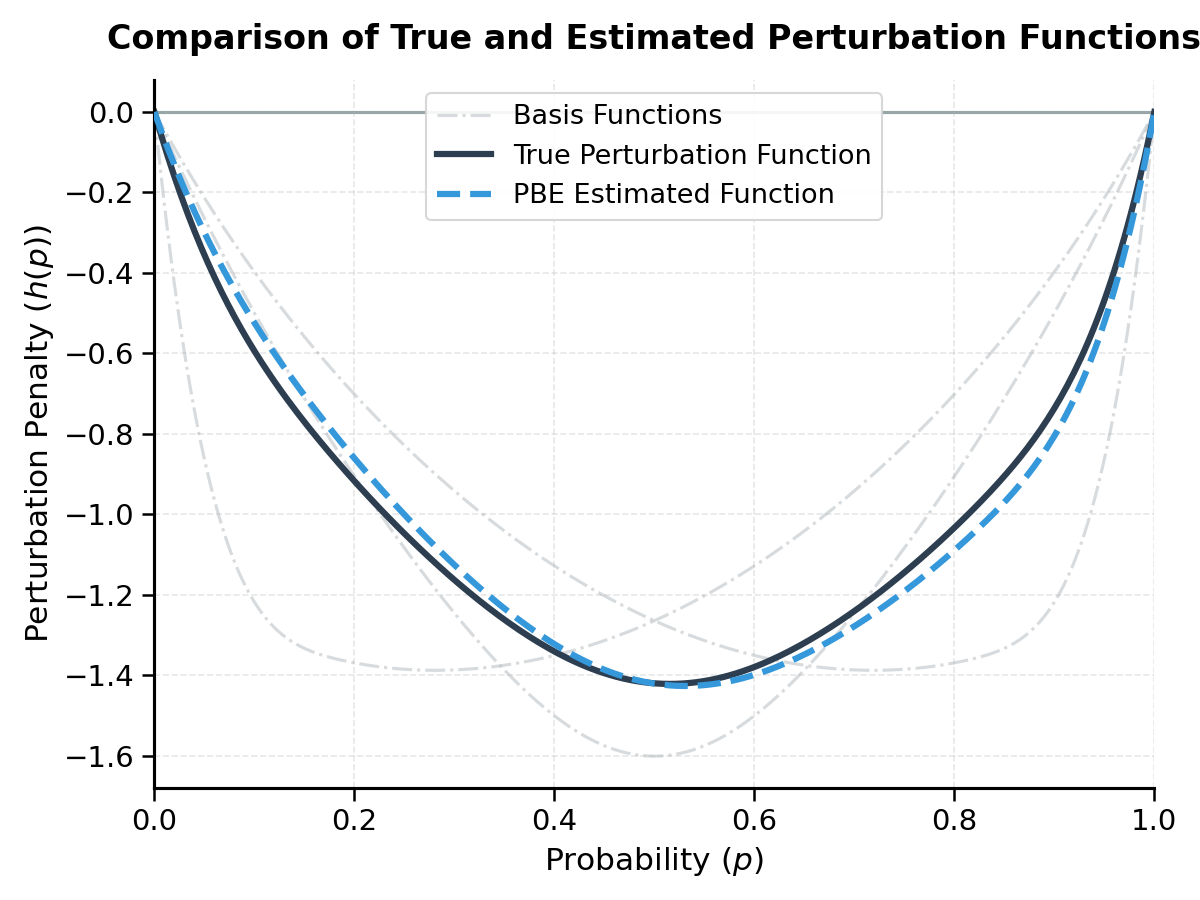} 
        \caption{Sample Size $N=300$}
        \label{fig:pert_N300}
    \end{subfigure}
    \hfill 
    \begin{subfigure}{0.48\textwidth}
        \centering
        \includegraphics[width=\linewidth]{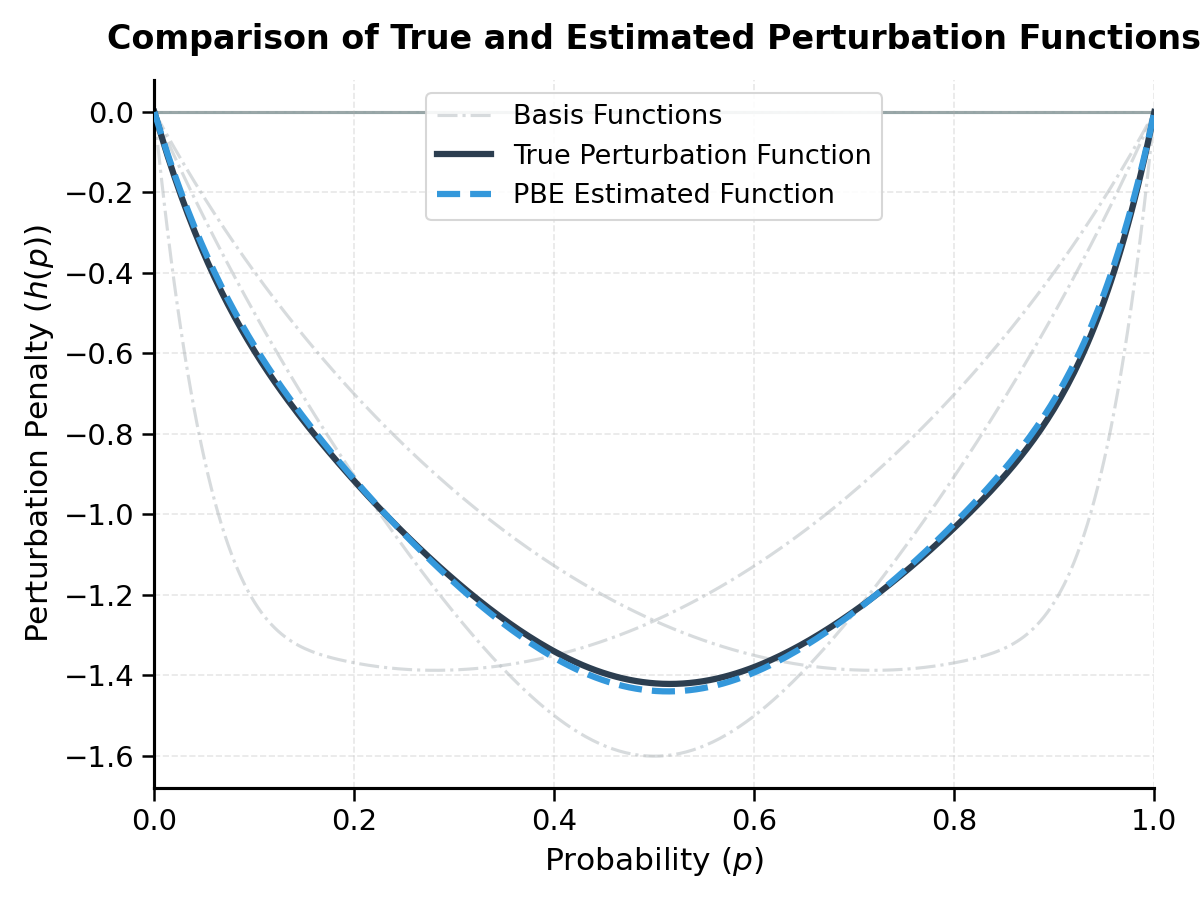}
        \caption{Sample Size $N=2,400$}
        \label{fig:pert_N2400}
    \end{subfigure}
    
    \caption{Comparison of the true and estimated perturbation functions for a specific problem instance under two different sample sizes ($N=300$ and $N=2,400$). The right panel demonstrates a significantly closer alignment with the ground truth due to the larger sample size.}
    \label{fig:perturbation_comparison_N}
\end{figure}

To visually corroborate these quantitative findings, Figure \ref{fig:perturbation_comparison_N} illustrates the performance of the PBE for a representative problem instance. The figure contrasts the true perturbation function (solid dark line) against the PBE estimated function (dashed blue line) at two distinct sample sizes. In panel (a), featuring a limited sample size of $N = 300$, a visible deviation remains between the estimated curve and the ground truth. Conversely, panel (b) demonstrates that as the sample expands to $N = 2,400$, the estimated dashed curve aligns almost perfectly with the true underlying mechanism. The background also incorporates the candidate basis functions (light gray lines). This visual evidence compellingly reinforces the estimator's asymptotic consistency, highlighting its capacity to accurately reconstruct complex cognitive penalty structures given sufficient observational data.

\subsubsection{Tree-Structured PUMs}

In this subsection, we validate the PBE framework under a tree-structured PUM specification. We consider a choice set of $K=5$ alternatives partitioned into two macroscopic nests, $\mathcal{L}_1=\{1,2\}$ and $\mathcal{L}_2=\{3,4\}$, with $d=4$ feature dimensions. This setting satisfies the anchoring condition stated in Proposition \ref{prop:universal_representability}. Synthetic datasets of various numbers of observations are generated by sampling attributes $\mathbf{X}$ and utility parameters $\beta^*$. True choice probabilities are derived from ground-truth structural weights $\omega^* \in \Delta^{\text{joint}}$ through the tree-structured duality, and discrete choice labels are obtained via Monte-Carlo sampling. The model is estimated using the bi-level PBE algorithm, employing the PEG method for the inner Fenchel-Young loss minimization.

\begin{figure}[ht!]
    \centering
    \begin{subfigure}{0.95\textwidth}
        \centering
        \includegraphics[width=\linewidth]{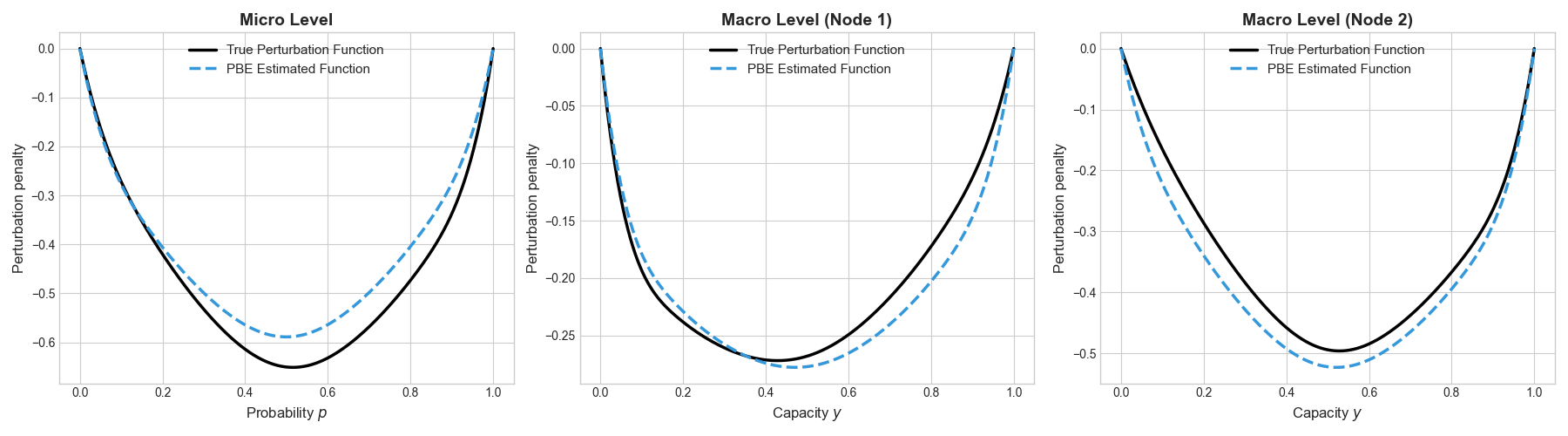} 
        \caption{Sample Size $N=1,000$}
        \label{fig:pert_H1000}
    \end{subfigure}
    \\
    \begin{subfigure}{0.95\textwidth}
        \centering
        \includegraphics[width=\linewidth]{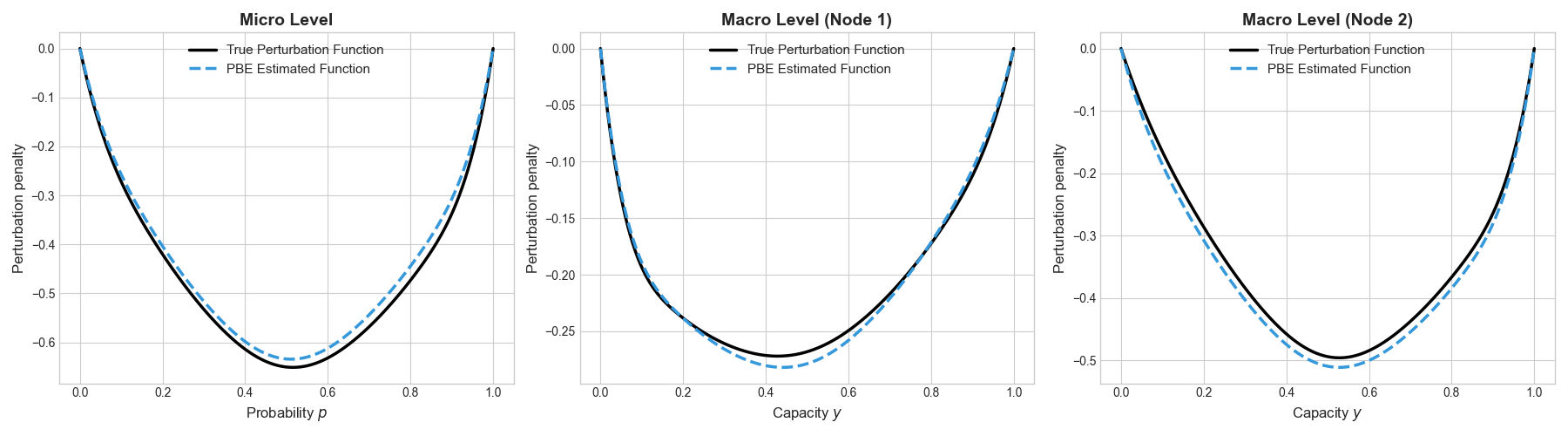}
        \caption{Sample Size $N=10,000$}
        \label{fig:pert_H10000}
    \end{subfigure}
    
    \caption{Comparison of the true and estimated tree-structured perturbation functions for a specific problem instance under two different sample sizes ($N=1,000$ and $N=10,000$). }
    \label{fig:perturbation_hierarchical}
\end{figure}

As illustrated in Figure \ref{fig:perturbation_hierarchical}, comparing the recovery results from the first subfigure (Figure \ref{fig:pert_H1000}) with a sample size of 1,000 to the second subfigure (Figure \ref{fig:pert_H10000}) with a sample size of 10,000, it is evident that the recovery performance of all three distinct perturbation functions improves significantly as the sample size grows. In the high-sample regime, the estimated curves align almost perfectly with the ground truth across all structural levels of the hierarchy. Furthermore, results from extensive randomized experiments corroborate this trend of asymptotic consistency. Specifically, when the sample size increases from 1,000 to 10,000, the average relative gap for the economic utility parameters ($\beta$) decreases from 0.06 to 0.03. Simultaneously, the average functional gap for the latent perturbation structures exhibits a substantial reduction, dropping from 0.16 to 0.07. These findings demonstrate the robustness and scalability of the PBE framework in disentangling complex decision geometries from discrete choice observations.

\subsection{Experiments on the Swissmetro Dataset}

To validate the practical efficacy of the proposed estimators, we conduct an empirical study using the Swissmetro dataset, a canonical benchmark in discrete choice analysis \citep{bierlaire2001acceptance}. Collected in 1998, this Stated Preference (SP) survey captures the decision-making behavior of commuters and business travelers along the St. Gallen-Geneva corridor. Respondents were presented with hypothetical choice situations involving three alternatives: private car, existing public train services, and the Swissmetro, a revolutionary underground mag-lev system operating at high speeds. 

After filtering the samples to respondents facing a complete choice set and removing incomplete choice observations, we obtain a final estimation sample of 9,036 valid choice observations. These observations correspond to 1,004 unique respondents, each evaluating nine distinct stated preference scenarios. Based on this refined dataset, we formulate the deterministic utility functions for the available alternatives.Let $V_{in}$ denote the systematic utility of alternative $i \in \{\text{Train}, \text{Swissmetro}, \text{Car}\}$ for individual $n$. We specify $V_{in}$ as a linear-in-parameters function:
\begin{equation}
    V_{in} = \beta_{i}^{\mathrm{ASC}} + \beta_{\mathrm{TT}} \mathrm{TT}_{in} + \beta_{\mathrm{CO}} \mathrm{CO}_{in} + \beta_{\mathrm{HE}} \mathrm{HE}_{in} + \boldsymbol{\gamma}_{i}^{\top} \mathbf{Z}_{n}
\end{equation}
where $\mathrm{TT}_{in}$, $\mathrm{CO}_{in}$, and $\mathrm{HE}_{in}$ represent the trip-specific travel time, travel cost, and headway, respectively. The term $\beta_{i}^{\mathrm{ASC}}$ denotes the alternative-specific constant (with the traditional train service normalized as the base alternative), while $\mathbf{Z}_{n}$ captures individual-specific characteristics and contextual modifiers, such as age, luggage requirements, and the ownership of a General Abonnement transit pass.

Leveraging this empirical setup, our primary objective is to test and systematically compare the structural expressivity across four distinct estimation paradigms: the standard MNL model, the Nested Logit model, the additive PBE, and the tree-structured PBE. Specifically, we establish the standard MNL model as our baseline. To evaluate the capacity for capturing unobserved correlations, both the Nested Logit model and the tree-structured PBE are formulated using an identical nesting topology, where the Train and Swissmetro alternatives are grouped into a single public transit nest. For our PBE implementations, we construct the hybrid dictionary using a total of five basis functions ($M=5$). Four of these are data-driven B-splines derived by directly solving the basis choice optimization model, while the remaining basis is explicitly specified as the classic Shannon entropy to anchor the framework with traditional economic interpretability.

To evaluate the predictive performance and structural fit of the models, we compare them using two primary metrics: the Brier score and the Brier Skill Score (BSS). While the Brier score serves as a proper scoring rule that quantifies the mean squared error of the predicted probability distributions, interpreting its absolute magnitude can be challenging due to the irreducible uncertainty inherent in the empirical choice variance. To resolve this, we employ the BSS as a robust measure of relative goodness-of-fit. By normalizing the model's Brier score against the inherent uncertainty of a naive baseline (such as aggregate market shares), the BSS strips away the uninformative baseline variance. This ensures a fair evaluation of the model's true predictive skill, explicitly highlighting the genuine improvement in explanatory power achieved by structurally flexible models like PBE.

Mathematically, the BSS is defined as:
\begin{equation}
    \mathrm{BSS} = 1 - \frac{\mathrm{BS}_{\mathrm{model}}}{\mathrm{BS}_{\mathrm{null}}}
\end{equation}
where $\mathrm{BS}_{\mathrm{model}}$ is the Brier score of the estimated model, and $\mathrm{BS}_{\mathrm{null}}$ represents the irreducible uncertainty, i.e., the expected Brier score of a zero-information model predicting strictly based on empirical market shares, calculated as $\mathrm{BS}_{\mathrm{null}} = 1 - \sum_{k \in \mathcal{C}} \bar{y}_k^2$, where $\bar{y}_k$ is the global market share of alternative $k$.

Table \ref{tab:performance_comparison} summarizes the empirical results. The evaluation demonstrates the following:

\begin{figure}[ht!]
    \centering
    \includegraphics[width=0.65\textwidth]{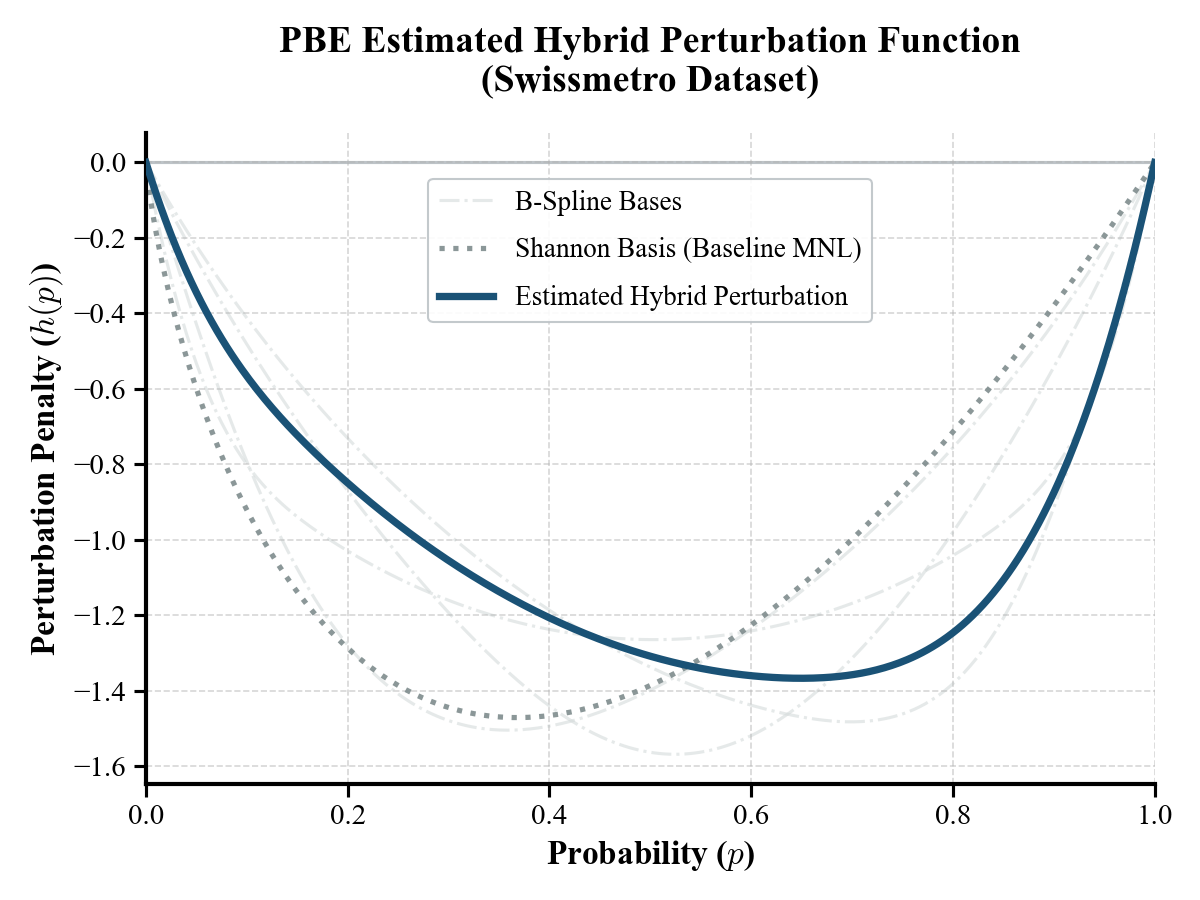}
    \caption{PBE-estimated hybrid perturbation function on the Swissmetro dataset. The solid curve represents the optimized penalty learned from a joint dictionary. Notably, the framework rejects the classic Shannon entropy (dotted curve) in favor of B-spline bases (dashed-dotted curves). }
    \label{fig:PBE_swissmetro}
\end{figure}

\begin{itemize}
    \item Both additive and tree-structured PBE implementations consistently outperform the MNL baseline across all evaluated metrics. While the absolute reduction in Brier score appears numerically modest due to the significant irreducible uncertainty inherent in the discrete choice task, the magnitude of improvement is substantially clarified through the BSS. The PBE models achieve a relative increase in effective explanatory power of approximately 6.51\% to 6.72\% over MNL, demonstrating the framework's ability to better capture the choice variance that standard models structurally ignore.
    \item Notably, even the additive PBE variant demonstrates slightly better predictive performance than the traditional Nested Logit specification in this empirical setting. This suggests that, within the Swissmetro context, learning an expressive and flexible choice kernel at the micro-level provides greater predictive utility than the a priori imposition of a tree structure constrained by rigid, fully compensatory assumptions.
    \item In the optimized structural results for both PBE models, the weight assigned to the Shannon entropy basis is exactly zero. This suggests that, under the specified basis dictionary, normalization scheme, and outer evaluation criterion, the empirical choice patterns are better approximated by non-entropic perturbation shapes than by the Shannon-entropy basis alone. The data-driven preference for B-spline bases highlights the value of structural flexibility; while this does not strictly imply that travelers definitively employ hard-truncation rules, it allows the model to accommodate a broader spectrum of decision patterns, potentially consistent with less smooth or more selective substitution patterns.
    \item  The incremental predictive gain achieved by the tree-structured PBE over its additive counterpart is relatively marginal. This finding implies that the IIA property might not be severely violated in this specific empirical instance. It suggests that latent cross-alternative correlations are either less prominent or are sufficiently approximated by the flexible micro-level perturbations recovered by the framework.
\end{itemize}

\begin{table}[ht!]
\centering
\caption{Performance Comparison of different models on the Swissmetro Dataset. The inherent dataset uncertainty (irreducible error) is $\mathrm{BS}_{\mathrm{null}} = 0.54813$. Relative improvements ($\Delta$) are calculated using the MNL model as the baseline.}
\label{tab:performance_comparison}
\begin{tabular}{l c c c c}
\toprule
\textbf{Estimation Paradigm} & \textbf{Brier Score} & \textbf{BSS} & \textbf{$\Delta$ Brier Score} & \textbf{$\Delta$ BSS} \\
\midrule
MNL (Baseline)               & 0.47211              & 0.13869      & --             & -- \\
Nested Logit            & 0.47179              & 0.13927      & 0.07\%         & 0.42\% \\
Additive PBE      & 0.46716              & 0.14772      & 1.05\%         & 6.51\% \\
Tree-structured PBE             & \textbf{0.46700}     & \textbf{0.14801} & \textbf{1.08\%} & \textbf{6.72\%} \\
\bottomrule
\end{tabular}
\end{table}

\section{Concluding Remarks} \label{sec:concluding}

In this paper, we propose a Fenchel--Young estimation framework for Perturbed Utility Models or PUMs. By exploiting the convex conjugacy between the perturbation function and the surplus function, the framework defines an estimation loss that is intrinsically aligned with the PUM probability-generating mechanism. For a fixed perturbation function, the Fenchel--Young estimator yields a globally convex objective in the utility parameters and remains well posed for sparse choice kernels, where likelihood-based estimation may become undefined because of zero predicted probabilities. We then extended this foundation to Parametric Basis Estimation, which learns a perturbation structure within a pre-specified dictionary while estimating the utility parameters through a convex inner problem. This extension provides a disciplined way to relax reliance on a single imposed perturbation function without treating the learned structure as a direct recovery of cognitive mechanisms. The synthetic experiments and real-world data application show that the framework is computationally tractable and can improve probabilistic prediction relative to standard benchmarks. The learned perturbation weights further demonstrate how the approach can approximate choice patterns beyond the standard Shannon-entropy/MNL specification.

Looking forward, several directions for future research emerge. First, while the current perturbation structural estimation framework accommodates both additive and tree structures, it relies on pre-specified tree-based topologies. Extending the dictionary to incorporate general, non-tree-based non-separable structural bases would empower the framework to discover more complex latent cross-alternative correlations. Building on this, an advanced extension could involve developing an end-to-end framework capable of directly reverse-engineering the latent structural hierarchy itself from choice data, thereby allowing the model to simultaneously identify the optimal nesting configuration and its corresponding perturbation geometry. Second, although the Fenchel-Young estimator provides global convexity, its specific mathematical formulation depends on the geometry of the underlying perturbation function. This structural dependence complicates standardized cross-model comparisons using the loss magnitude itself. Investigating structurally invariant divergence metrics or generalized scaling techniques to unify the estimation and evaluation standards across disparate PUM classes remains a relevant theoretical challenge. Finally, the current methodology is designed for disaggregate, individual-level one-hot choice observations. In many practical contexts, choice data is frequently observed as aggregated market shares. Exploring methodological approaches to perform joint estimation directly over such macroscopic distributional data constitutes a practical avenue for future research.

\section*{Acknowledgement}

The authors are grateful to Mogens Fosgerau for his helpful comments and suggestions on an earlier version of this paper. This work was supported by the National Science Foundation under CMMI-2240981 and CMMI-2526627. Any opinions, findings, and conclusions or recommendations expressed in this material are those of the authors and do not necessarily reflect the views of the National Science Foundation.

\bibliographystyle{apalike}

\newpage 

\setcounter{section}{0}
\renewcommand{\thesection}{EC.\arabic{section}}
\setcounter{figure}{0}
\renewcommand{\thefigure}{EC-\arabic{figure}}

\setcounter{equation}{0}
\renewcommand{\theequation}{EC-\arabic{equation}}
\setcounter{table}{0}
\renewcommand{\thetable}{EC-\arabic{table}}

\Large
\begin{center}
    Electronic Companions for \\
    \textbf{Fenchel-Young Estimators of Perturbed Utility Models}
\end{center}

\normalsize

\section{Proofs} \label{app:proofs}

\subsection{Proof of Proposition \ref{prop:consistency}}

By linearity of expectation and the assumption $\mathbb{E}[\mathbf{y}]=\mathbf{p}^*$, the population
risk can be written as
\[
    \mathcal{R}(\mathbf{V})
    =
    \mathbb{E}_{\mathbf{y}\sim \mathbf{p}^*}
    \left[
        \Omega(\mathbf{V})-\mathbf{y}^{\top}\mathbf{V}
    \right]
    =
    \Omega(\mathbf{V})
    -
    \left(\mathbb{E}_{\mathbf{y}\sim \mathbf{p}^*}[\mathbf{y}]\right)^\top \mathbf{V}
    =
    \Omega(\mathbf{V})-(\mathbf{p}^*)^\top \mathbf{V}.
\]
Since $\Omega$ is convex, $\mathcal{R}$ is also convex as a function of $\mathbf{V}$, being the sum of
a convex function and a linear function.

If $\mathbf{V}^*$ is a minimizer of $\mathcal{R}$ and $\Omega$ is differentiable, the first-order
optimality condition for the unconstrained convex minimization problem gives
\[
    \mathbf{0}
    =
    \nabla \mathcal{R}(\mathbf{V}^*)
    =
    \nabla \Omega(\mathbf{V}^*)-\mathbf{p}^*.
\]
Hence
\[
    \nabla \Omega(\mathbf{V}^*)=\mathbf{p}^*.
\]

Conversely, suppose there exists $\mathbf{V}^*\in\mathbb{R}^K$ such that
\[
    \nabla \Omega(\mathbf{V}^*)=\mathbf{p}^*.
\]
By convexity of $\Omega$, for every $\mathbf{V}\in\mathbb{R}^K$,
\[
    \Omega(\mathbf{V})
    \geq
    \Omega(\mathbf{V}^*)
    +
    \nabla\Omega(\mathbf{V}^*)^\top(\mathbf{V}-\mathbf{V}^*).
\]
Substituting $\nabla\Omega(\mathbf{V}^*)=\mathbf{p}^*$ yields
\[
    \Omega(\mathbf{V})
    \geq
    \Omega(\mathbf{V}^*)
    +
    (\mathbf{p}^*)^\top(\mathbf{V}-\mathbf{V}^*).
\]
Rearranging gives
\[
    \Omega(\mathbf{V})-(\mathbf{p}^*)^\top\mathbf{V}
    \geq
    \Omega(\mathbf{V}^*)-(\mathbf{p}^*)^\top\mathbf{V}^*.
\]
Therefore,
\[
    \mathcal{R}(\mathbf{V})
    \geq
    \mathcal{R}(\mathbf{V}^*),
\]
so $\mathbf{V}^*$ is a global minimizer of $\mathcal{R}$.

It remains to establish strict propriety at the probability level. Suppose that $\Omega=\Lambda^*$ for a
proper convex perturbation function $\Lambda$ defined on $\Delta$, and that $\Lambda$ is strictly convex
on $\Delta$. Let
\[
    \hat{\mathbf{p}}=\nabla\Omega(\mathbf{V}).
\]
By Fenchel duality, whenever $\hat{\mathbf{p}}=\nabla\Omega(\mathbf{V})$, we have
\[
    \mathbf{V}\in \partial \Lambda(\hat{\mathbf{p}})
\]
up to the usual simplex normal direction. Equivalently, there exists a scalar $\lambda\in\mathbb{R}$ such
that
\[
    \mathbf{V}-\lambda\mathbf{1}\in \partial \Lambda(\hat{\mathbf{p}}).
\]
Since both $\mathbf{p}^*$ and $\hat{\mathbf{p}}$ belong to the simplex, we have
\[
    (\mathbf{p}^*-\hat{\mathbf{p}})^\top \mathbf{1}=0.
\]
Hence the normal-direction term vanishes in simplex differences.

Using the Fenchel equality at $\hat{\mathbf{p}}$,
\[
    \Omega(\mathbf{V})
    =
    \hat{\mathbf{p}}^\top \mathbf{V}
    -
    \Lambda(\hat{\mathbf{p}}),
\]
the excess population risk relative to any $\mathbf{V}^*$ satisfying
$\nabla\Omega(\mathbf{V}^*)=\mathbf{p}^*$ can be written as the Bregman divergence generated by
$\Lambda$:
\[
\begin{aligned}
    \mathcal{R}(\mathbf{V})-\mathcal{R}(\mathbf{V}^*)
    &=
    \left[
        \Omega(\mathbf{V})-(\mathbf{p}^*)^\top\mathbf{V}
    \right]
    -
    \left[
        \Omega(\mathbf{V}^*)-(\mathbf{p}^*)^\top\mathbf{V}^*
    \right]  \\
    &=
    \Lambda(\mathbf{p}^*)-\Lambda(\hat{\mathbf{p}})
    -
    (\mathbf{p}^*-\hat{\mathbf{p}})^\top(\mathbf{V}-\lambda\mathbf{1})  \\
    &=
    D_{\Lambda}(\mathbf{p}^*\Vert \hat{\mathbf{p}}).
\end{aligned}
\]
By convexity of $\Lambda$,
\[
    D_{\Lambda}(\mathbf{p}^*\Vert \hat{\mathbf{p}})\geq 0.
\]
If $\Lambda$ is strictly convex on $\Delta$, then
\[
    D_{\Lambda}(\mathbf{p}^*\Vert \hat{\mathbf{p}})=0
    \quad\Longleftrightarrow\quad
    \hat{\mathbf{p}}=\mathbf{p}^*.
\]
Therefore, the expected Fenchel--Young loss is uniquely minimized at the level of predicted probabilities
when
\[
    \hat{\mathbf{p}}=\mathbf{p}^*.
\]
This proves strict propriety with respect to the predicted choice probabilities.

Finally, the utility vector achieving this prediction is generally not unique. In PUMs, adding a common
constant to all utilities does not change the choice probabilities:
\[
    \nabla\Omega(\mathbf{V}+c\mathbf{1})
    =
    \nabla\Omega(\mathbf{V}),
    \qquad c\in\mathbb{R}.
\]
Thus strict propriety should be understood as uniqueness of the induced probability prediction, not
uniqueness of the utility representative $\mathbf{V}$.


\subsection{Proof of Lemma \ref{lem:fy_cont_dom}}

Since $\mathcal{B}$ is compact, there exists $R<\infty$ such that
$\|\boldsymbol{\beta}\|\leq R$ for all $\boldsymbol{\beta}\in\mathcal{B}$.
Let
\[
    m_{\Lambda}:=\inf_{\mathbf{q}\in\Delta}\Lambda(\mathbf{q})>-\infty,
\]
and choose $\mathbf{q}_0\in\Delta$ such that $\Lambda(\mathbf{q}_0)<\infty$.

For any $\mathbf{V}\in\mathbb{R}^K$,
\[
    \Omega(\mathbf{V})
    =
    \sup_{\mathbf{q}\in\Delta}
    \{\mathbf{q}^{\top}\mathbf{V}-\Lambda(\mathbf{q})\}
    \leq
    \|\mathbf{V}\|_{\infty}-m_{\Lambda},
\]
while evaluating the supremum at $\mathbf{q}_0$ gives
\[
    \Omega(\mathbf{V})
    \geq
    \mathbf{q}_0^{\top}\mathbf{V}-\Lambda(\mathbf{q}_0)
    \geq
    -\|\mathbf{V}\|_{\infty}-\Lambda(\mathbf{q}_0).
\]
Hence, for some constant $C_{\Lambda}<\infty$,
\[
    |\Omega(\mathbf{V})|
    \leq
    \|\mathbf{V}\|_{\infty}+C_{\Lambda}.
\]
Taking $\mathbf{V}=\mathbf{X}\boldsymbol{\beta}$ and using
$\mathbf{y}\in\{\mathbf{e}_1,\dots,\mathbf{e}_K\}$,
\[
\begin{aligned}
    \bigl|
    \ell_{\mathrm{FY}}(\boldsymbol{\beta};\mathbf{X},\mathbf{y})
    \bigr|
    &\leq
    |\Omega(\mathbf{X}\boldsymbol{\beta})|
    +
    |\mathbf{y}^{\top}\mathbf{X}\boldsymbol{\beta}|  \\
    &\leq
    2\|\mathbf{X}\boldsymbol{\beta}\|_{\infty}
    +
    C_{\Lambda}  \\
    &\leq
    2R\|\mathbf{X}\|
    +
    C_{\Lambda}.
\end{aligned}
\]
Thus
\[
    M(\mathbf{X},\mathbf{y})
    :=
    2R\|\mathbf{X}\|+C_{\Lambda}
\]
is an integrable envelope.

Finally, the above bounds show that $\Omega$ is finite on $\mathbb{R}^K$.
Since $\Omega$ is convex and finite, it is continuous. Because
$\boldsymbol{\beta}\mapsto\mathbf{X}\boldsymbol{\beta}$ is linear and
$-\mathbf{y}^{\top}\mathbf{X}\boldsymbol{\beta}$ is continuous, the map
$\boldsymbol{\beta}\mapsto
\ell_{\mathrm{FY}}(\boldsymbol{\beta};\mathbf{X},\mathbf{y})$
is continuous on $\mathcal{B}$.


\subsection{Proof of Lemma \ref{lem:pum_identifiable}}

Let
    \[
        \mathbf{V}_j(\mathbf{X})
        :=
        \mathbf{V}(\mathbf{X};\boldsymbol{\beta}_j),
        \qquad j=1,2.
    \]
    If
    \[
        \hat{\mathbf{p}}(\mathbf{X};\boldsymbol{\beta}_1)
        =
        \hat{\mathbf{p}}(\mathbf{X};\boldsymbol{\beta}_2)
        =
        \mathbf{p}(\mathbf{X})
        \quad \text{a.s.},
    \]
    then the KKT conditions for the PUM primal problem imply that, for some scalars
    $\lambda_1(\mathbf{X})$ and $\lambda_2(\mathbf{X})$,
    \[
        \mathbf{V}_1(\mathbf{X})-\lambda_1(\mathbf{X})\mathbf{1}
        =
        \nabla\Lambda(\mathbf{p}(\mathbf{X})),
    \]
    and
    \[
        \mathbf{V}_2(\mathbf{X})-\lambda_2(\mathbf{X})\mathbf{1}
        =
        \nabla\Lambda(\mathbf{p}(\mathbf{X})).
    \]
    Hence
    \[
        \mathbf{V}_1(\mathbf{X})-\mathbf{V}_2(\mathbf{X})
        =
        \bigl(\lambda_1(\mathbf{X})-\lambda_2(\mathbf{X})\bigr)\mathbf{1}
        \quad \text{a.s.}
    \]
    The normalized utility identifiability condition then gives
    \[
        \boldsymbol{\beta}_1=\boldsymbol{\beta}_2.
    \]


\subsection{Proof of Theorem \ref{thm:fy_consistency}}

We split the proof into two main steps.

\medskip
\noindent\textbf{Part 1: The population risk is uniquely minimized at $\boldsymbol{\beta}^\star$.}

Recall the Fenchel--Young loss for a single observation $(\mathbf{X}, \mathbf{y})$:
\[
    \ell_{\mathrm{FY}}(\boldsymbol{\beta}; \mathbf{X}, \mathbf{y})
    \;=\;
    \Omega\big(\mathbf{V}(\mathbf{X};\boldsymbol{\beta})\big)
    \;-\;
    \mathbf{y}^\top \mathbf{V}(\mathbf{X};\boldsymbol{\beta}),
\]
where $\mathbf{V}(\mathbf{X};\boldsymbol{\beta}) \in \mathbb{R}^K$ is the vector of systematic utilities and $\mathbf{y} \in \{0,1\}^K$ is the one-hot representation of the observed choice.

Denote the population (conditional) risk at covariate value $\mathbf{X}$ by
\begin{align*}
    L_{\mathbf{X}}(\boldsymbol{\beta})
    &\;\coloneqq\;
    \mathbb{E}_{\mathbb{P}^\star}
    \big[
        \ell_{\mathrm{FY}}(\boldsymbol{\beta}; \mathbf{X}, \mathbf{Y})
        \,\big|\,
        \mathbf{X}
    \big] \\
    &\;=\;
    \Omega\big(\mathbf{V}(\mathbf{X};\boldsymbol{\beta})\big)
    \;-\;
    \mathbb{E}_{\mathbb{P}^\star}[\mathbf{Y} \mid \mathbf{X}]^\top \mathbf{V}(\mathbf{X};\boldsymbol{\beta}).
\end{align*}
By the well-specified assumption, there exists a true parameter $\boldsymbol{\beta}^\star$ such that the conditional expectation of the observed choice matches the model probability:
\[
    \mathbb{E}_{\mathbb{P}^\star}[\mathbf{Y} \mid \mathbf{X}] 
    \;=\; 
    \hat{\mathbf{p}}(\mathbf{X}; \boldsymbol{\beta}^\star)
    \quad
    \text{for $\mathbb{P}^\star$-a.e.\ $\mathbf{X}$},
\]
where $\hat{\mathbf{p}}(\mathbf{X};\boldsymbol{\beta}) = \nabla_{\mathbf{V}}\Omega(\mathbf{V}(\mathbf{X};\boldsymbol{\beta}))$ is the choice probability vector predicted by the PUM.

Fix an $\mathbf{X}$ in the support (we omit the $\mathbf{X}$-dependence in notation when clear).
Let $\mathbf{V} \coloneqq \mathbf{V}(\mathbf{X};\boldsymbol{\beta})$, $\mathbf{V}^\star \coloneqq \mathbf{V}(\mathbf{X};\boldsymbol{\beta}^\star)$, and $\mathbf{p}^\star \coloneqq \hat{\mathbf{p}}(\mathbf{X};\boldsymbol{\beta}^\star)$.
Then the conditional risk simplifies to:
\[
    L_{\mathbf{X}}(\boldsymbol{\beta})
    \;=\;
    \Omega(\mathbf{V})
    \;-\;
    (\mathbf{p}^\star)^\top \mathbf{V}.
\]

By the definition of the convex conjugate $\Omega(\mathbf{V}) = \sup_{\mathbf{q} \in \Delta} \{ \mathbf{q}^\top \mathbf{V} - \Lambda(\mathbf{q}) \}$, we have the Fenchel--Young inequality:
\[
    \Omega(\mathbf{V})
    \;\ge\;
    \mathbf{q}^\top \mathbf{V} - \Lambda(\mathbf{q}),
    \quad \forall \mathbf{q} \in \Delta.
\]
Applying this inequality to $\mathbf{p}^\star \in \Delta$ and rearranging yields a lower bound for the conditional risk:
\[
    L_{\mathbf{X}}(\boldsymbol{\beta})
    \;=\;
    \Omega(\mathbf{V}) - (\mathbf{p}^\star)^\top \mathbf{V}
    \;\ge\;
    -\Lambda(\mathbf{p}^\star).
\]

Now, evaluate the risk at the true parameter $\boldsymbol{\beta}^\star$:
\[
    L_{\mathbf{X}}(\boldsymbol{\beta}^\star)
    \;=\;
    \Omega(\mathbf{V}^\star)
    \;-\;
    (\mathbf{p}^\star)^\top \mathbf{V}^\star.
\]
From the properties of convex conjugates and the identity $\mathbf{p}^\star = \nabla \Omega(\mathbf{V}^\star)$, the supremum in the definition of $\Omega$ is attained exactly at $\mathbf{p}^\star$. Thus, the equality holds:
\[
    \Omega(\mathbf{V}^\star)
    \;=\;
    (\mathbf{p}^\star)^\top \mathbf{V}^\star
    - \Lambda(\mathbf{p}^\star),
\]
which implies
\[
    L_{\mathbf{X}}(\boldsymbol{\beta}^\star)
    \;=\;
    -\Lambda(\mathbf{p}^\star).
\]
Combining the inequality and the value at $\boldsymbol{\beta}^\star$, we obtain:
\[
    L_{\mathbf{X}}(\boldsymbol{\beta})
    \;\ge\;
    L_{\mathbf{X}}(\boldsymbol{\beta}^\star)
    \quad
    \text{for $\mathbb{P}^\star$-a.e.\ $\mathbf{X}$ and all $\boldsymbol{\beta} \in \mathcal{B}$.}
\]

Moreover, equality $L_{\mathbf{X}}(\boldsymbol{\beta}) = L_{\mathbf{X}}(\boldsymbol{\beta}^\star)$ holds if and only if equality holds in the Fenchel--Young inequality. This occurs if and only if $\mathbf{p}^\star$ is a subgradient of $\Omega$ at $\mathbf{V}$, i.e.,
\[
    \mathbf{p}^\star = \nabla \Omega(\mathbf{V}) \implies \hat{\mathbf{p}}(\mathbf{X}; \boldsymbol{\beta}^\star) = \hat{\mathbf{p}}(\mathbf{X}; \boldsymbol{\beta}).
\]
Therefore, for $\mathbb{P}^\star$-almost all $\mathbf{X}$,
\[
    L_{\mathbf{X}}(\boldsymbol{\beta}) = L_{\mathbf{X}}(\boldsymbol{\beta}^\star)
    \quad\Longleftrightarrow\quad
    \hat{\mathbf{p}}(\mathbf{X};\boldsymbol{\beta})
    =
    \hat{\mathbf{p}}(\mathbf{X};\boldsymbol{\beta}^\star).
\]

The total population risk is $R(\boldsymbol{\beta}) \coloneqq \mathbb{E}_{\mathbb{P}^\star} [L_{\mathbf{X}}(\boldsymbol{\beta})]$. Integrating the pointwise inequality implies $R(\boldsymbol{\beta}) \ge R(\boldsymbol{\beta}^\star)$. Furthermore, if $R(\boldsymbol{\beta}) = R(\boldsymbol{\beta}^\star)$, then $L_{\mathbf{X}}(\boldsymbol{\beta}) = L_{\mathbf{X}}(\boldsymbol{\beta}^\star)$ almost surely, which implies $\hat{\mathbf{p}}(\mathbf{X};\boldsymbol{\beta}) = \hat{\mathbf{p}}(\mathbf{X};\boldsymbol{\beta}^\star)$ almost surely. By the identifiability Lemma~\ref{lem:pum_identifiable}, this implies $\boldsymbol{\beta} = \boldsymbol{\beta}^\star$. Thus, $\boldsymbol{\beta}^\star$ is the unique minimizer.

\medskip
\noindent\textbf{Part 2: Uniform convergence and consistency.}

Define the empirical FY risk as
\[
    R_N(\boldsymbol{\beta})
    \;\coloneqq\;
    \frac{1}{N}\sum_{n=1}^N
    \ell_{\mathrm{FY}}(\boldsymbol{\beta}; \mathbf{X}_n, \mathbf{Y}_n).
\]
By Lemma~\ref{lem:fy_cont_dom}, the loss function is continuous on the compact set $\mathcal{B}$ and dominated by an integrable envelope. These conditions are sufficient to invoke the Uniform Law of Large Numbers (see, e.g., Newey and McFadden, 1994, Lemma 2.4), yielding:
\[
    \sup_{\boldsymbol{\beta} \in \mathcal{B}}
    \big| R_N(\boldsymbol{\beta}) - R(\boldsymbol{\beta}) \big|
    \;\xrightarrow{p}\; 0.
\]

We now apply the standard consistency argument for extremum estimators. Fix $\varepsilon > 0$. Since $\mathcal{B}$ is compact and $\boldsymbol{\beta}^\star$ is the unique minimizer of the continuous function $R(\boldsymbol{\beta})$, the minimum of $R$ over the complement of the $\varepsilon$-ball is strictly separated from the global minimum:
\[
    \delta_\varepsilon
    \;\coloneqq\;
    \inf_{\{\boldsymbol{\beta} \in \mathcal{B} : \|\boldsymbol{\beta} - \boldsymbol{\beta}^\star\| \ge \varepsilon\}} R(\boldsymbol{\beta})
    \;-\;
    R(\boldsymbol{\beta}^\star)
    \;>\; 0.
\]

Consider the event $\{\|\hat{\boldsymbol{\beta}}_N - \boldsymbol{\beta}^\star\| \ge \varepsilon\}$. On this event, we have $R(\hat{\boldsymbol{\beta}}_N) \ge R(\boldsymbol{\beta}^\star) + \delta_\varepsilon$.
At the same time, by definition of the estimator, $R_N(\hat{\boldsymbol{\beta}}_N) \le R_N(\boldsymbol{\beta}^\star)$. We can bound the population risk difference:
\begin{align*}
    R(\hat{\boldsymbol{\beta}}_N) - R(\boldsymbol{\beta}^\star)
    &\;=\;
    \big(R(\hat{\boldsymbol{\beta}}_N) - R_N(\hat{\boldsymbol{\beta}}_N)\big)
    + \big(R_N(\hat{\boldsymbol{\beta}}_N) - R_N(\boldsymbol{\beta}^\star)\big)
    + \big(R_N(\boldsymbol{\beta}^\star) - R(\boldsymbol{\beta}^\star)\big) \\
    &\;\le\;
    \big|R(\hat{\boldsymbol{\beta}}_N) - R_N(\hat{\boldsymbol{\beta}}_N)\big|
    + 0
    + \big|R_N(\boldsymbol{\beta}^\star) - R(\boldsymbol{\beta}^\star)\big| \\
    &\;\le\;
    2 \sup_{\boldsymbol{\beta} \in \mathcal{B}} \big| R_N(\boldsymbol{\beta}) - R(\boldsymbol{\beta}) \big|.
\end{align*}
Combining these inequalities, the event $\|\hat{\boldsymbol{\beta}}_N - \boldsymbol{\beta}^\star\| \ge \varepsilon$ implies that $2 \sup_{\boldsymbol{\beta}} |R_N - R| \ge \delta_\varepsilon$. Therefore:
\[
    \mathbb{P}\big( \|\hat{\boldsymbol{\beta}}_N - \boldsymbol{\beta}^\star\| \ge \varepsilon \big)
    \;\le\;
    \mathbb{P}\left(
        \sup_{\boldsymbol{\beta} \in \mathcal{B}}
        \big|R_N(\boldsymbol{\beta}) - R(\boldsymbol{\beta})\big|
        \ge \frac{\delta_\varepsilon}{2}
    \right).
\]
By uniform convergence, the probability on the right-hand side converges to 0 as $N \to \infty$. Thus, $\hat{\boldsymbol{\beta}}_N \xrightarrow{p} \boldsymbol{\beta}^\star$.


\subsection{Proof of Theorem \ref{thm:fy_asymptotics}}

Let $\ell_n(\boldsymbol{\beta}) \coloneqq \ell_{\mathrm{FY}}(\boldsymbol{\beta}; \mathbf{X}_n, \mathbf{Y}_n)$ denote the Fenchel--Young loss for the $n$-th observation, and let $\mathbf{g}_n(\boldsymbol{\beta}) \coloneqq \nabla \ell_n(\boldsymbol{\beta})$ denote its gradient vector. Since the estimator $\hat{\boldsymbol{\beta}}_N$ minimizes the empirical loss and the true parameter $\boldsymbol{\beta}^\star$ lies strictly in the interior of the parameter space, the empirical first-order optimality condition $\frac{1}{N}\sum_{n=1}^N \mathbf{g}_n(\hat{\boldsymbol{\beta}}_N) = \mathbf{0}$ holds with probability approaching one. By Proposition 2, the strictly proper scoring nature of the Fenchel--Young loss establishes Fisher consistency, which implies $\mathbb{E}[\mathbf{g}_n(\boldsymbol{\beta}^\star)] = \mathbf{0}$.

    Since $\hat{\boldsymbol{\beta}}_N \xrightarrow{\mathrm{p}} \boldsymbol{\beta}^\star$ (by consistency), we expand the empirical gradient around the true parameter $\boldsymbol{\beta}^\star$. To rigorously handle the vector-valued gradient without incorrectly invoking a single-point Mean Value Theorem (which is generally invalid for multivariate mappings), and to robustly accommodate models with measure-zero non-differentiable kinks (e.g., probability boundaries in Sparsemax), we apply the exact integral form of the Taylor expansion:
    \begin{equation}
        \mathbf{0} = \frac{1}{N}\sum_{n=1}^N \mathbf{g}_n(\hat{\boldsymbol{\beta}}_N) = \frac{1}{N}\sum_{n=1}^N \mathbf{g}_n(\boldsymbol{\beta}^\star) + \bar{\mathbf{H}}_N (\hat{\boldsymbol{\beta}}_N - \boldsymbol{\beta}^\star),
    \end{equation}
    where $\bar{\mathbf{H}}_N$ is the integrated empirical Hessian matrix evaluated exactly along the line segment between $\hat{\boldsymbol{\beta}}_N$ and $\boldsymbol{\beta}^\star$:
    \begin{equation}
        \bar{\mathbf{H}}_N \coloneqq \frac{1}{N}\sum_{n=1}^N \int_0^1 \nabla^2 \ell_n\big(\boldsymbol{\beta}^\star + t(\hat{\boldsymbol{\beta}}_N - \boldsymbol{\beta}^\star)\big) \, \mathrm{d}t.
    \end{equation}

    Rearranging the terms and scaling by $\sqrt{N}$ yields:
    \begin{equation}
        \sqrt{N}(\hat{\boldsymbol{\beta}}_N - \boldsymbol{\beta}^\star) = - \bar{\mathbf{H}}_N^{-1} \left( \frac{1}{\sqrt{N}}\sum_{n=1}^N \mathbf{g}_n(\boldsymbol{\beta}^\star) \right).
    \end{equation}

    We analyze the asymptotic behavior of the two components independently:

    \begin{itemize}
        \item \textbf{Hessian Convergence:} Note that for linear-in-parameter PUMs, the multivariate chain rule yields the correct dimensional form for the sample Hessian: $\nabla^2 \ell_n(\boldsymbol{\beta}) = \mathbf{X}_n^\top \left[\nabla^2 \Omega(\mathbf{X}_n \boldsymbol{\beta})\right] \mathbf{X}_n$. Here, $\mathbf{X}_n \in \mathbb{R}^{K \times d}$ and $\nabla^2 \Omega \in \mathbb{R}^{K \times K}$, guaranteeing $\nabla^2 \ell_n(\boldsymbol{\beta}) \in \mathbb{R}^{d \times d}$. By the Hessian domination assumption, the spectral norm of $\nabla^2 \Omega$ is uniformly bounded by some constant $C > 0$. Consequently, the spectral norm of the sample Hessian is globally bounded by $\|\nabla^2 \ell_n(\boldsymbol{\beta})\| \le C \|\mathbf{X}_n\|^2$. 
        
        Because $\mathbb{E}[\|\mathbf{X}_n\|^2] < \infty$, the sample Hessian is strictly dominated by an integrable envelope. The Uniform Law of Large Numbers (ULLN) thus guarantees that the sample Hessian converges uniformly in probability. Since $\hat{\boldsymbol{\beta}}_N \xrightarrow{\mathrm{p}} \boldsymbol{\beta}^\star$ and $\Omega$ is twice continuously differentiable almost surely, the Continuous Mapping Theorem ensures that the integrated empirical Hessian converges in probability to the population expected Hessian:
        \begin{equation}
            \bar{\mathbf{H}}_N \xrightarrow{\mathrm{p}} \mathbb{E}[\nabla^2 \ell_{\mathrm{FY}}(\boldsymbol{\beta}^\star; \mathbf{X}, \mathbf{Y})] = \mathbf{H}(\boldsymbol{\beta}^\star).
        \end{equation}
        
        \item \textbf{Central Limit Theorem:} The gradient term $\frac{1}{\sqrt{N}}\sum_{n=1}^N \mathbf{g}_n(\boldsymbol{\beta}^\star)$ is a scaled sum of i.i.d.\ random vectors with zero mean and a finite covariance matrix $\mathbf{J}(\boldsymbol{\beta}^\star) = \mathbb{E}[\mathbf{g}_n(\boldsymbol{\beta}^\star)\mathbf{g}_n(\boldsymbol{\beta}^\star)^\top]$. By the Multivariate Lindeberg--L\'evy Central Limit Theorem, this term converges in distribution:
        \begin{equation}
            \frac{1}{\sqrt{N}}\sum_{n=1}^N \mathbf{g}_n(\boldsymbol{\beta}^\star) \xrightarrow{\mathrm{d}} \mathcal{N}\big(\mathbf{0}, \mathbf{J}(\boldsymbol{\beta}^\star)\big).
        \end{equation}
    \end{itemize}

    Since the expected Hessian $\mathbf{H}(\boldsymbol{\beta}^\star)$ is assumed to be positive definite and thus invertible, we combine the probability convergence of the Hessian and the distributional convergence of the gradient using Slutsky's Theorem to obtain the final asymptotic distribution:
    \begin{equation}
        \sqrt{N}(\hat{\boldsymbol{\beta}}_N - \boldsymbol{\beta}^\star) \xrightarrow{\mathrm{d}} -\mathbf{H}(\boldsymbol{\beta}^\star)^{-1} \mathcal{N}\big(\mathbf{0}, \mathbf{J}(\boldsymbol{\beta}^\star)\big) \sim \mathcal{N}\big(\mathbf{0}, \mathbf{H}(\boldsymbol{\beta}^\star)^{-1} \mathbf{J}(\boldsymbol{\beta}^\star) \mathbf{H}(\boldsymbol{\beta}^\star)^{-1}\big).
    \end{equation}
    This completes the proof.


\subsection{Proof of Proposition \ref{prop:universal_representability}}

For notational compactness, index the gauge-fixed composite dictionary by a finite index set $\mathcal{J}$. That is, write
\begin{equation}
    \mathcal{F}_\mathcal{T} = \{F_j : j \in \mathcal{J}\},
\end{equation}
where each $F_j$ is either a leaf-level function of the form $F_j(p) = h_m(p_i)$ for some $i \in \mathcal{L}$ and $m \in \{1, \dots, M\}$, or an active macro-level function of the form $F_j(p) = \phi_l(y_u(p))$ for some $u \in \mathcal{A}_\mathcal{T}$ and $l \in \{1, \dots, L\}$.

By definition of the conic hull, every perturbation $\Lambda \in \mathrm{cone}(\mathcal{F}_\mathcal{T})$ can be written as
\begin{equation}
    \Lambda(p) = \sum_{j \in \mathcal{J}} a_j F_j(p), \quad a_j \ge 0.
\end{equation}
Since $\Lambda$ is assumed to be nonzero and the family $\mathcal{F}_\mathcal{T}$ is linearly independent on $\Delta_{\mathcal{L}}$, not all coefficients $a_j$ are zero. Hence, the sum of all coefficients is strictly positive:
\begin{equation}
    c := \sum_{j \in \mathcal{J}} a_j > 0.
\end{equation}
Define the normalized weights
\begin{equation}
    \omega_j := \frac{a_j}{c}, \quad \forall j \in \mathcal{J}.
\end{equation}
Then $\omega_j \ge 0$ for all $j$ and $\sum_{j \in \mathcal{J}} \omega_j = 1$. Therefore $\omega = (\omega_j)_{j \in \mathcal{J}}$ belongs to the joint unit simplex $\Delta_{\mathrm{joint}}$, and
\begin{equation}
    \Lambda(p) = c \sum_{j \in \mathcal{J}} \omega_j F_j(p).
\end{equation}
Expanding the index $j$ back into leaf-level and active macro-level coordinates gives the claimed joint-simplex gauge-fixed representation. This proves existence.

We now prove uniqueness. Suppose that the same nonzero perturbation admits two such representations:
\begin{equation}
    \Lambda(p) = c \sum_{j \in \mathcal{J}} \omega_j F_j(p) = \tilde{c} \sum_{j \in \mathcal{J}} \tilde{\omega}_j F_j(p), \quad \forall p \in \Delta_{\mathcal{L}},
\end{equation}
where $c > 0$, $\tilde{c} > 0$, and $\omega, \tilde{\omega} \in \Delta_{\mathrm{joint}}$.
Rearranging the terms yields
\begin{equation}
    \sum_{j \in \mathcal{J}} (c \omega_j - \tilde{c} \tilde{\omega}_j) F_j(p) = 0, \quad \forall p \in \Delta_{\mathcal{L}}.
\end{equation}
By the assumed linear independence of $\mathcal{F}_\mathcal{T}$ on $\Delta_{\mathcal{L}}$, every coefficient in the linear combination must vanish:
\begin{equation}
    c \omega_j - \tilde{c} \tilde{\omega}_j = 0 \implies c \omega_j = \tilde{c} \tilde{\omega}_j, \quad \forall j \in \mathcal{J}.
\end{equation}
Summing both sides over all $j \in \mathcal{J}$ yields
\begin{equation}
    c \sum_{j \in \mathcal{J}} \omega_j = \tilde{c} \sum_{j \in \mathcal{J}} \tilde{\omega}_j.
\end{equation}
Since both $\omega$ and $\tilde{\omega}$ lie in the joint simplex, their respective sums are equal to 1 (i.e., $\sum_{j \in \mathcal{J}} \omega_j = \sum_{j \in \mathcal{J}} \tilde{\omega}_j = 1$). Therefore, we obtain
\begin{equation}
    c = \tilde{c}.
\end{equation}
Substituting $c = \tilde{c}$ back into the equation $c \omega_j = \tilde{c} \tilde{\omega}_j$ and using the fact that $c > 0$, we immediately get
\begin{equation}
    \omega_j = \tilde{\omega}_j, \quad \forall j \in \mathcal{J}.
\end{equation}
Thus, the conic scale $c$ and the joint-simplex weight vector $\omega$ are both unique. The proof is complete.


\subsection{Proof of Proposition \ref{prop:continuity}}

For fixed $\boldsymbol{\omega}$, the inner objective has the form
\[
    F(\boldsymbol{\beta},\boldsymbol{\omega})
    =
    \sum_{n=1}^N
    \Gamma_{\mathrm{FY}}
    \bigl(
        \mathbf{y}_n,\mathbf{X}_n^\top\boldsymbol{\beta};
        \Lambda_{\boldsymbol{\omega}}
    \bigr)
    +
    \lambda\|\boldsymbol{\beta}\|_2^2 .
\]
Since the Fenchel--Young term is convex in $\boldsymbol{\beta}$ and the ridge penalty
$\lambda\|\boldsymbol{\beta}\|_2^2$ is strongly convex, $F(\cdot,\boldsymbol{\omega})$ is
strongly convex. Hence the minimizer
$\hat{\boldsymbol{\beta}}(\boldsymbol{\omega})$ is unique.

Continuity follows from the joint continuity of
$F(\boldsymbol{\beta},\boldsymbol{\omega})$ and the strong convexity of the inner problem (which intrinsically guarantees coercivity and bounded sublevel sets):
if $\boldsymbol{\omega}_k\to\boldsymbol{\omega}$, then the corresponding unique minimizers
satisfy
\[
    \hat{\boldsymbol{\beta}}(\boldsymbol{\omega}_k)
    \to
    \hat{\boldsymbol{\beta}}(\boldsymbol{\omega}).
\]

On any active regime where the support pattern is fixed, the primitive functions and the
choice map are continuously differentiable. Hence
$F$ is twice continuously differentiable on that regime. The inner first-order condition is
\[
    \nabla_{\boldsymbol{\beta}}
    F(\hat{\boldsymbol{\beta}}(\boldsymbol{\omega}),\boldsymbol{\omega})
    =
    \mathbf{0}.
\]
Moreover,
\[
    \nabla_{\boldsymbol{\beta}}^2
    F(\hat{\boldsymbol{\beta}}(\boldsymbol{\omega}),\boldsymbol{\omega})
    \succ \mathbf{0}
\]
because of the ridge term. The implicit function theorem therefore implies that
$\hat{\boldsymbol{\beta}}(\boldsymbol{\omega})$ is continuously differentiable on the interior of each such regime.

By assumption, the union of regime boundaries has Lebesgue measure zero. Therefore
$\hat{\boldsymbol{\beta}}(\boldsymbol{\omega})$ is differentiable almost everywhere on
$\Delta^{\mathrm{joint}}$.

Finally,
\[
    \mathcal{L}_{\mathrm{outer}}(\boldsymbol{\omega})
    =
    \sum_{n=1}^N
    \left\|
        \mathbf{y}_n
        -
        \hat{\mathbf{p}}
        \bigl(
            \mathbf{X}_n;
            \hat{\boldsymbol{\beta}}(\boldsymbol{\omega}),
            \Lambda_{\boldsymbol{\omega}}
        \bigr)
    \right\|_2^2
\]
is a continuous composition of continuous mappings, and it is continuously differentiable wherever both the
choice map and $\hat{\boldsymbol{\beta}}(\boldsymbol{\omega})$ are continuously differentiable. Hence
$\mathcal{L}_{\mathrm{outer}}$ is continuous globally and differentiable almost everywhere.


\newpage
\section{Detailed Derivations of Section \ref{subsec:PBE_solution}} \label{app:hypergradient_derivation}

This appendix provides the specific analytical forms for the components in the hypergradient expression (Eq.~\eqref{eq:hypergradient_final}) and details the highly efficient recursive algorithm used for their evaluation.

Let $\mathbf{V}_n = \mathbf{X}_n^\top \hat{\boldsymbol{\beta}}$ denote the systematic utility vector at the inner optimum for observation $n$, and let $\hat{\mathbf{p}}_n = \nabla \Omega_{\Lambda_{\boldsymbol{\omega}}}(\mathbf{V}_n)$ be the corresponding choice probability. We denote the Hessian of the surplus function (which corresponds to the Jacobian of the choice probability map) as $\mathbf{H}_{\Omega, n} = \nabla^2 \Omega_{\Lambda_{\boldsymbol{\omega}}}(\mathbf{V}_n) \in \mathbb{R}^{|\mathcal{C}| \times |\mathcal{C}|}$. Furthermore, let $\mathbf{g}_{\omega_k}(\mathbf{p}) \triangleq \frac{\partial \nabla_{\mathbf{p}} \Lambda_{\boldsymbol{\omega}}}{\partial \omega_k}$ denote the structural gradient vector with respect to a generic weight component $\omega_k$.

The analytical components are derived as follows:

\begin{enumerate}
    \item \textbf{Outer Objective Gradient w.r.t. Utility} ($\nabla_{\hat{\boldsymbol{\beta}}} \mathcal{L}_{\mathrm{outer}}$):
    Based on the Brier score $\mathcal{L}_{\mathrm{outer}} = \sum_{n=1}^N \| \mathbf{y}_n - \hat{\mathbf{p}}_n \|_2^2$, the gradient with respect to the utility parameters is obtained via the chain rule:
    \begin{equation}
        \nabla_{\hat{\boldsymbol{\beta}}} \mathcal{L}_{\mathrm{outer}} = \sum_{n=1}^N 2 \mathbf{X}_n \left[ \mathbf{H}_{\Omega, n} (\hat{\mathbf{p}}_n - \mathbf{y}_n) \right].
    \end{equation}

    \item \textbf{Inner Hessian Matrix} ($\mathbf{H}_{\mathrm{inner}}$):
    The Hessian of the inner Fenchel-Young loss with respect to $\boldsymbol{\beta}$ captures the curvature of the utility landscape. Due to the separability of samples and the regularization term, it takes the form:
    \begin{equation} 
        \mathbf{H}_{\mathrm{inner}} = \sum_{n=1}^N \left( \mathbf{X}_n \mathbf{H}_{\Omega, n} \mathbf{X}_n^{\top} \right) + 2\theta \mathbf{I}.
    \end{equation}

    \item \textbf{Mixed Partial Derivative Matrix} ($\mathbf{B}_{\mathrm{inner}}$):
    This matrix captures the sensitivity of the inner gradient to changes in the joint structure $\boldsymbol{\omega}$. Using the property $\frac{\partial \hat{\mathbf{p}}}{\partial \omega_k} = -\mathbf{H}_{\Omega, n} \mathbf{g}_{\omega_k}(\hat{\mathbf{p}}_n)$, the element corresponding to the $k$-th weight is:
    \begin{equation}
        \frac{\partial (\nabla_{\boldsymbol{\beta}} \mathcal{J}_{\mathrm{inner}})}{\partial \omega_k} = -\sum_{n=1}^N \mathbf{X}_n \left[ \mathbf{H}_{\Omega, n} \mathbf{g}_{\omega_k}(\hat{\mathbf{p}}_n) \right].
    \end{equation}
    For a micro-level weight $\hat{\omega}_{h, m}$, the vector $\mathbf{g}_{\hat{\omega}_{h, m}}$ evaluates the component-wise derivatives $h_m'(\hat{p}_i)$. For a macro-level weight $\hat{\omega}_{\phi, s, l}$, the vector $\mathbf{g}_{\hat{\omega}_{\phi, s, l}}$ is given by $\phi_l'(\hat{y}_s) \mathbf{a}_s$, where $\mathbf{a}_s$ is the binary indicator vector for the leaves descending from internal node $s$. The full matrix $\mathbf{B}_{\mathrm{inner}}$ is constructed by stacking these column vectors.

    \item \textbf{Direct Structural Gradient} ($\frac{\partial \mathcal{L}_{\mathrm{outer}}}{\partial \boldsymbol{\omega}}$):
    This term represents the direct effect of warping the probability simplex on the outer loss. It mirrors the structure of the mixed partials:
    \begin{equation}
        \frac{\partial \mathcal{L}_{\mathrm{outer}}}{\partial \omega_k} = \sum_{n=1}^N 2 (\hat{\mathbf{p}}_n - \mathbf{y}_n)^{\top} \left( -\mathbf{H}_{\Omega, n} \mathbf{g}_{\omega_k}(\hat{\mathbf{p}}_n) \right).
    \end{equation}
\end{enumerate}

In the implementation, the explicit construction of the large tensor $\mathbf{H}_{\mathrm{inner}}$ is avoided. Instead, we compute the vector-Jacobian products using the intrinsic topology of $\mathbf{H}_{\Omega, n}$. 

Unlike purely additive separable PUMs, the tree-structured PUM introduces cross-alternative correlations. Due to the macroscopic penalties $\Phi_{s, \hat{\boldsymbol{\omega}}_{\phi, s}}(y_s)$, the primal Hessian $\mathbf{D}_n$ exhibits a block-hierarchical structure governed by the incidence matrix $\mathbf{A}$ of the decision tree $\mathcal{T}$. The elements of $\mathbf{D}_n$ comprise the micro-level diagonal curvatures $d_i = \sum_{m=1}^M \hat{\omega}_{h, m} h''_m(\hat{p}_{n,i})$ and macro-level rank-one block updates from internal nodes $d_s = \sum_{l=1}^L \hat{\omega}_{\phi, s, l} \phi''_l(\hat{y}_{n,s})$. 

Consequently, $\mathbf{H}_{\Omega, n}$, which is mathematically the inverse of $\mathbf{D}_n$ restricted to the zero-sum subspace of the simplex tangent plane, cannot be evaluated using a simple Sherman-Morrison rank-one update. To formalize the structural complexity, the primal Hessian admits the exact decomposition $\mathbf{D}_n = \mathbf{W} + \mathbf{A} \mathbf{\Sigma} \mathbf{A}^{\top}$, where $\mathbf{W} = \operatorname{diag}(\{h_{\hat{\boldsymbol{\omega}}_h}''(\hat{p}_{n,i})\}_{i \in \mathcal{C}})$ encapsulates the micro-level diagonal curvatures, $\mathbf{\Sigma} = \operatorname{diag}(\{\Phi_{s, \hat{\boldsymbol{\omega}}_{\phi, s}}''(\hat{y}_{n,s})\}_{s \in \mathcal{T}_{\mathrm{int}}})$ captures the macro-level nested penalties, and $\mathbf{A} \in \{0, 1\}^{|\mathcal{C}| \times |\mathcal{T}_{\mathrm{int}}|}$ is the hierarchical incidence matrix of the decision tree.

Computing the required vector-Jacobian product $\mathbf{z} = \mathbf{H}_{\Omega, n} \mathbf{V}$ is mathematically equivalent to solving the symmetric KKT system for the simplex projection:
\begin{equation}
    \begin{pmatrix} \mathbf{W} + \mathbf{A} \mathbf{\Sigma} \mathbf{A}^{\top} & \mathbf{1} \\ \mathbf{1}^{\top} & 0 \end{pmatrix}
    \begin{pmatrix} \mathbf{z} \\ \Delta c \end{pmatrix} =
    \begin{pmatrix} \mathbf{V} \\ 0 \end{pmatrix},
\end{equation}
where $\Delta c$ is the dual multiplier enforcing the probability conservation constraint $\mathbf{1}^{\top}\mathbf{z} = 0$. A naive dense inversion of this system scales at $\mathcal{O}(|\mathcal{C}|^3)$, which is computationally prohibitive for large-scale choice sets.

Instead, we leverage the tree-based Woodbury matrix identity via recursive Schur complements. Because $\mathbf{A}$ strictly encodes a tree topology $\mathcal{T}$, the global inversion algebraically decouples into a dynamic programming algorithm. During the \textit{bottom-up pass}, we recursively compute the localized Schur complement (effective precision) for each internal node $s$ by aggregating the inverse curvatures of its immediate descendants:
\begin{equation}
    \pi_s = \left( [\mathbf{\Sigma}]_{s,s}^{-1} + \sum_{j \in \mathrm{children}(s)} \pi_j \right)^{-1},
\end{equation}
while simultaneously aggregating the projected utility residuals upwards. 

Subsequently, a \textit{top-down pass} propagates the resolved dual messages back to the leaf nodes, yielding the exact gradient update:
\begin{equation}
    z_i = [\mathbf{W}]_{i,i}^{-1} \left( V_i - \Delta c - \sum_{s \in \mathcal{P}(i)} \eta_s \right),
\end{equation}
where $\eta_s$ represents the localized message resolved at node $s$, and $\mathcal{P}(i)$ denotes the ancestry path of leaf $i$. By passing the localized Schur complements bottom-up along the hierarchical branches of $\mathcal{T}$, and subsequently propagating the vector-Jacobian products top-down, we efficiently compute the exact implicit gradients in $\mathcal{O}(|\mathcal{C}| \times |\mathcal{T}_{\mathrm{int}}|)$ time for each observation, entirely bypassing the need to explicitly instantiate or invert the dense covariance matrix $\mathbf{H}_{\Omega, n}$.


\newpage
\section{Choice of Finite Basis Perturbation Functions} \label{app:Choice_Basis}

This appendix details the mathematical and algorithmic procedure for curating highly expressive basis dictionaries, a critical upstream task for the empirical implementation of the tree-structured PBE framework. Given strictly restricted budgets of basis functions, the core challenge lies in determining how to construct the finite dictionaries, $\mathcal{H}_{\mathrm{micro}}$ and $\mathcal{H}_{\mathrm{macro}}$, to maximize their overall structural expressivity. The fundamental objective is to strategically select concise combinations of basis primitives that optimally span the space of plausible perturbation geometries, thereby enabling the composite mechanisms $h_{\boldsymbol{\mu}}$ and $\Phi_{s, \boldsymbol{\nu}_s}$ to capture as diverse a range of latent decision-making characteristics and nested behavioral tradeoffs as possible.

The essence of this challenge lies in identifying finite sets of valid convex basis functions such that their structural dissimilarity, measured via a specific pairwise distance metric, is maximized. Fundamentally, this constitutes a maximin optimization problem within a continuous functional space. However, directly operating within the space of strictly convex functions introduces cumbersome geometric constraints. To resolve this, we tackle an equivalent formulation in the domain of their derivative functions. Because the original convex basis functions are required to satisfy the conditions formalized in our gauge-fixing framework, the feasible space of their derivatives is elegantly characterized by exactly three analytical conditions: any valid derivative must be strictly monotonically increasing (ensuring strict convexity), its definite integral over the interval $[0, 1]$ must identically equal zero (guaranteeing boundary normalization, e.g., $h_m(0) = h_m(1) = 0$), and it must satisfy a specific integral equality dictated by the uniform global area constraint.

Beyond the geometric constraints, a more profound computational challenge arises from the infinite-dimensional nature of the continuous functional space. Directly optimizing over all possible continuous derivative functions is analytically intractable. To overcome this infinite-dimensionality, we leverage the good approximation properties of B-spline theory. By projecting the unknown derivative functions onto a finite set of B-spline bases, we elegantly reduce the continuous functional space into a finite-dimensional parameterized space governed by discrete control points. This mathematical projection translates the complex calculus of variations into tractable linear algebra.

We outline the procedure for generating a generic dictionary of $M$ bases; this routine is applied independently to construct both the micro and macro dictionaries. Let each candidate derivative function $f_m(x)$ (where $x \in [0, 1]$ generically represents either a micro probability $q_i$ or a macro capacity $y_s$) be parameterized by a cubic B-spline (degree $d=3$) defined over a uniform knot vector. We represent the $m$-th derivative function using a strictly finite set of $R$ control points, denoted by the vector $\mathbf{c}_m = [c_{m,1}, c_{m,2}, \dots, c_{m,R}]^\top \in \mathbb{R}^R$:
\begin{equation}
    f_m(x) = \sum_{r=1}^R c_{m,r} B_{r,3}(x) = \mathbf{b}(x)^\top \mathbf{c}_m,
\end{equation}
where $\mathbf{b}(x) = [B_{1,3}(x), \dots, B_{R,3}(x)]^\top$ is the vector of cubic B-spline basis functions. 

To formalize the optimization, we first define the constant structural matrices and vectors derived from the B-spline basis integrals. We construct the symmetric Gram matrix $\mathbf{Q} \in \mathbb{R}^{R \times R}$ to capture the weighted inner product, where its entries are $Q_{ij} = \int_0^1 B_{i,3}(x) B_{j,3}(x) x(1-x) \, dx$. Mathematically, each element $Q_{ij}$ quantifies the functional overlap and geometric correlation between the $i$-th and $j$-th B-spline bases. The incorporation of the variance-like weighting factor, $x(1-x)$, is critically important: it deliberately suppresses the extreme boundary regions ($x \to 0$ or $1$) where classical perturbation gradients often exhibit numerical singularities, while proportionally amplifying the structural divergence in the densely populated interior of the simplex.

Furthermore, we define two constant vectors $\mathbf{v} \in \mathbb{R}^R$ and $\mathbf{u} \in \mathbb{R}^R$ to encode the integration constraints. The vector $\mathbf{v}$, with elements $v_r = \int_0^1 B_{r,3}(x) \, dx$, enforces the boundary normalization condition. Specifically, the constraint $\mathbf{v}^\top \mathbf{c}_m = 0$ mathematically corresponds to $\int_0^1 h'_m(x) \, dx = h_m(1) - h_m(0) = 0$. Because the functions are explicitly anchored at $h_m(0)=0$, this immediately ensures $h_m(1)=0$. To enforce the uniform global area constraint $Z(h_m) \triangleq -\int_0^1 h_m(x) \, dx = C$ without resorting to computationally expensive numerical integration of the unknown primitive function during optimization, we define the elements of $\mathbf{u}$ as $u_r = \int_0^1 x B_{r,3}(x) \, dx$. Since the B-splines parameterize the derivative $h'_m(x)$, the inner product yields $\mathbf{u}^\top \mathbf{c}_m = \int_0^1 x h'_m(x) \, dx$. Applying integration by parts gives:
\[
    \int_0^1 x h'_m(x) \, dx = \Big[x h_m(x)\Big]_0^1 - \int_0^1 h_m(x) \, dx.
\]
Given the established boundary conditions $h_m(0) = h_m(1) = 0$, the boundary term evaluates to zero, yielding exactly $-\int_0^1 h_m(x) \, dx$. Consequently, the strictly linear constraint $\mathbf{u}^\top \mathbf{c}_m = C$ perfectly and elegantly encodes the global area matching requirement. Finally, we introduce a standard first-difference matrix $\mathbf{D} \in \mathbb{R}^{(R-1) \times R}$ such that $\mathbf{D}\mathbf{c}_m$ yields the vector of consecutive control point differences $(c_{m, r+1} - c_{m, r})$.

With $M$ basis functions to select, the continuous maximin functional design is rigorously transformed into the following finite-dimensional nonlinear programming problem over the $M$ control vectors $\{\mathbf{c}_1, \dots, \mathbf{c}_M\}$:
\begin{align}
    \max_{\mathbf{c}_1, \dots, \mathbf{c}_M \in \mathbb{R}^R} \quad & \min_{1 \le i < j \le M} \left( 1 - \frac{\mathbf{c}_i^\top \mathbf{Q} \mathbf{c}_j}{\sqrt{\mathbf{c}_i^\top \mathbf{Q} \mathbf{c}_i} \sqrt{\mathbf{c}_j^\top \mathbf{Q} \mathbf{c}_j}} \right) \label{eq:bspline_obj} \\
    \text{subject to:} \quad & \mathbf{v}^\top \mathbf{c}_m = 0, \quad \forall m \in \{1, \dots, M\}, \label{eq:bspline_zero_int} \\
    & \mathbf{u}^\top \mathbf{c}_m = C, \quad \forall m \in \{1, \dots, M\}, \label{eq:bspline_dist_match} \\
    & \mathbf{D} \mathbf{c}_m \ge \epsilon \mathbf{1}, \quad \forall m \in \{1, \dots, M\}, \label{eq:bspline_monotone}
\end{align}
where $\epsilon > 0$ is a small predefined tolerance parameter to strictly enforce the monotonic increase (and thus the strict convexity of the corresponding perturbation primitives), and $\mathbf{1}$ is a vector of ones.

While the objective function formulated in Eq.~\eqref{eq:bspline_obj} accurately captures the maximin distance design, the inner minimization operator renders the objective space non-differentiable at points where multiple pairwise distances tie. To circumvent this analytical bottleneck, we employ a standard epigraph reformulation. By introducing an auxiliary continuous scalar variable $\delta \in \mathbb{R}$ to represent the lower bound of all pairwise cosine distances, we decouple the nested maximin structure and rigorously transform the problem into a smooth maximization task:
\begin{align}
    \max_{\delta \in \mathbb{R}, \{\mathbf{c}_1, \dots, \mathbf{c}_M\}} \quad & \delta \label{eq:bspline_obj_smooth} \\
    \text{subject to:} \quad & \delta \le 1 - \frac{\mathbf{c}_i^\top \mathbf{Q} \mathbf{c}_j}{\sqrt{\mathbf{c}_i^\top \mathbf{Q} \mathbf{c}_i} \sqrt{\mathbf{c}_j^\top \mathbf{Q} \mathbf{c}_j}}, \quad \forall 1 \le i < j \le M, \label{eq:bspline_distance_bound} \\
    & \mathbf{v}^\top \mathbf{c}_m = 0, \quad \forall m \in \{1, \dots, M\}, \label{eq:bspline_zero_int_smooth} \\
    & \mathbf{u}^\top \mathbf{c}_m = C, \quad \forall m \in \{1, \dots, M\}, \label{eq:bspline_dist_match_smooth} \\
    & \mathbf{D} \mathbf{c}_m \ge \epsilon \mathbf{1}, \quad \forall m \in \{1, \dots, M\}. \label{eq:bspline_monotone_smooth}
\end{align}

It is important to note that the optimization problem formulated above is inherently non-convex due to the cosine distance term in constraint \eqref{eq:bspline_distance_bound}. Nevertheless, we can solve the problem to local optima using the method of Sequential Convex Programming (SCP). By applying a first-order Taylor approximation to linearize the non-convex distance constraint around the current iterate, the model is systematically transformed into a sequence of standard linear programming subproblems, given that all other geometric constraints are strictly linear. Iteratively solving this sequence of LPs robustly converges to a high-quality local optimum. Furthermore, because the total number of decision variables (determined by the restricted dictionary sizes and the finite number of spline control points $R$) is generally small, solving these sequential LPs incurs minimal computational overhead. 
It is important to note that the optimization problem formulated above is inherently non-convex due to the cosine distance term in constraint \eqref{eq:bspline_distance_bound}. Nevertheless, we can solve the problem to local optima using the method of Sequential Convex Programming (SCP). By applying a first-order Taylor approximation to linearize the non-convex distance constraint around the current iterate, the model is systematically transformed into a sequence of standard linear programming subproblems, given that all other geometric constraints are strictly linear. Iteratively solving this sequence of LPs robustly converges to a high-quality local optimum. Furthermore, because the total number of decision variables (determined by the restricted dictionary sizes and the finite number of spline control points $R$) is generally small, solving these sequential LPs incurs minimal computational overhead. 

In practice, while purely optimization-driven B-spline basis functions maximize structural flexibility, they may lack immediate behavioral interpretability. To balance theoretical interpretability with structural adaptability, we advocate for a hybrid dictionary construction strategy. This paradigm entails pre-specifying a small anchor set of canonical basis functions (such as the Shannon entropy, which corresponds to the classic MNL model) and supplementing them with custom B-spline bases to capture unobserved behavioral deviations. The proposed optimization framework accommodates this hybrid approach. By analytically projecting the gradients of the known canonical functions onto the spline basis space, they act as fixed constant vectors. The objective function can be augmented to maximize the minimum functional angle not only among the newly generated B-spline bases themselves, but also explicitly between the new bases and the classical economic anchors in both dictionaries. 

\end{document}